\documentclass[a4paper,reqno]{amsart}

\usepackage[T1]{fontenc}
\usepackage{lmodern}
\usepackage{microtype}
\usepackage{amssymb,amsmath,amsthm,amsfonts,mathtools,mathrsfs}
\usepackage{graphicx}
\usepackage[all]{xy}
\SelectTips{cm}{}
\usepackage{caption}
\usepackage{pgf,tikz}
\usetikzlibrary{arrows,arrows.meta}
\usepackage{tikz-cd}
\usepackage{adjustbox}
\usepackage{verbatim}
\usepackage{enumitem}
\usepackage{booktabs}
\usepackage{needspace}
\usepackage{xcolor}
\usepackage[a4paper, margin=1in]{geometry}

\usepackage[colorlinks=true,linkcolor=blue!55!black,citecolor=blue!55!black,
urlcolor=blue!55!black]{hyperref}
\hypersetup{
	pdftitle={Ideal filtrations and stabilization in alternating HRS tilting},
	pdfauthor={Zhe Han and Ping He},
	pdfsubject={Alternating HRS tilting, HRS ideal filtrations, and stabilization},
	pdfkeywords={HRS tilt, torsion pair, realization functor, HRS ideal filtration,
		HRS height, two-term silting, square-zero extension}
}

\setlist[enumerate]{label=(\roman*),leftmargin=2.2em}
\setlist[itemize]{leftmargin=2.0em}

\theoremstyle{plain}
\newtheorem{thm}{Theorem}[section]
\newtheorem{theorem}[thm]{Theorem}

\newtheorem{corollary}[thm]{Corollary}

\newtheorem{lemma}[thm]{Lemma}

\newtheorem{proposition}[thm]{Proposition}

\newtheorem{thmx}{Theorem}

\theoremstyle{definition}

\newtheorem{definition}[thm]{Definition}
\newtheorem{exm}{Example}

\theoremstyle{remark}

\newtheorem{remark}[thm]{Remark}
\newtheorem*{remark*}{Remark}

\renewcommand{\mod}{\operatorname{mod}}
\newcommand{\Ann}{\operatorname{Ann}}
\newcommand{\proj}{\operatorname{proj}}
\newcommand{\thick}{\operatorname{thick}}
\newcommand{\Hom}{\operatorname{Hom}}
\newcommand{\End}{\operatorname{End}}
\newcommand{\Ext}{\operatorname{Ext}}
\newcommand{\rad}{\operatorname{rad}}
\newcommand{\add}{\operatorname{add}}
\renewcommand{\Im}{\operatorname{Im}}
\newcommand{\Coker}{\operatorname{Coker}}
\newcommand{\Fac}{\operatorname{Fac}}
\newcommand{\Sub}{\operatorname{Sub}}
\newcommand{\Adm}{\operatorname{Adm}}

\newcommand{\T}{\mathcal{T}}
\newcommand{\F}{\mathcal{F}}
\renewcommand{\H}{\mathcal{H}}
\newcommand{\B}{\mathcal{B}}
\newcommand{\A}{\mathcal{A}}
\newcommand{\D}{\mathcal{D}}

\newcommand{\C}{\mathcal{C}}
\renewcommand{\P}{\mathbf{P}}
\newcommand{\Q}{\mathbf{Q}}
\newcommand{\V}{\mathbf{v}}
\newcommand{\w}{\mathbf{w}}
\newcommand{\uu}{\mathbf{u}}

\newcommand{\Db}{\mathrm D^{\mathrm b}}
\newcommand{\Kb}{\mathrm K^{\mathrm b}}
\newcommand{\len}{\ell}

\title[Ideal filtrations and stabilization in HRS tilting]
{Ideal filtrations and stabilization in alternating HRS tilting}

\author{Zhe Han}
\address{Z.H.: School of Mathematics and Statistics, Henan University, 475004 Kaifeng, China}
\email{zhehan@vip.henu.edu.cn}

\author{Ping He$^*$}
\address{P.H.: Beijing Institute of Mathematical Sciences and Applications, 101408 Beijing, China}
\email{pinghe@bimsa.cn}

\thanks{$^*$ Corresponding author}
\subjclass[2020]{Primary 18G80; Secondary 16G10, 18E40}
\keywords{HRS tilt, torsion pair, realization functor, derived equivalence, HRS ideal filtration, HRS height, two-term silting. }
\begin{document}
\begin{abstract}
We prove that the realization functors obtained by alternating forward
and backward HRS tilts of any torsion pair over a finite-dimensional
algebra are equivalences from some step onward. The number of steps
is bounded by the lengths of
the indecomposable projective modules. The even hearts define an
increasing filtration of ideals contained in a square-zero ideal.
For a torsion pair induced by a two-term silting complex, we identify
the successive layers with Buan--Zhou kernel ideals and compute them
by reduction modulo the successive ideals. We also describe the largest quotient over
which the complex becomes tilting and determine the exact
stabilization index from the two parity heights. We construct
algebras for which
the HRS height is arbitrarily large, and compute their ideal filtrations.
\end{abstract}
\maketitle

\section{Introduction}
Happel--Reiten--Smal{\o} tilting associates an abelian heart to a torsion
pair \cite{HRS}. The corresponding realization functor need not be a
derived equivalence; criteria for being an equivalence are given in
\cite{CHZ,pv,sr}. Psaroudakis and Vit\'oria study realization
functors from \cite{BBD} in a wider derived Morita framework and relate their
equivalence property to tilting and cotilting $t$-structures \cite{pv}.
For a general HRS tilt, the realization functor is an equivalence
if and only if the exact fully faithful embedding of the second,
backward HRS heart into the original category is essentially surjective
\cite{CHZ}. Recent applications
of this criterion to particular torsion pairs and silting mutation
appear in \cite{Pavon}.
For finite-dimensional hereditary algebras, Stanley and van
Roosmalen characterize derived-equivalent bounded hearts by a
Serre-functor condition on their aisles \cite{sr}.
Han and He study the corresponding criterion for HRS tilts in terms
of effaceable torsion pairs \cite{HanHeSerre}.

In this paper we study the realization functors arising from repeated
HRS tilts. Starting with a torsion pair $(\T,\F)$ in $\mod\Lambda$,
we alternate forward and backward tilts at the induced torsion pairs.
Each new heart is formed in the bounded derived category of the
preceding heart. Write
\[
\A^{(0)}=\mod\Lambda,\qquad
G^{(m)}:\Db(\A^{(m)})\longrightarrow\Db(\A^{(m-1)})\quad(m\ge1)
\]
for these hearts and realization functors. We first prove that the
realization functors are equivalences from some step onward. This
holds for every torsion pair, with no functorial finiteness assumption.

\begin{thmx}\label{thm:intro-main}
Let $\Lambda$ be a finite-dimensional algebra over any field. If
$P_i\twoheadrightarrow S_i$, $1\le i\le r$, are the projective
covers of its pairwise nonisomorphic simple modules, put
$d_\Lambda=\sum_{i=1}^r\bigl(\len_\Lambda(P_i)-1\bigr).$
For every torsion pair there is $0\le N\le d_\Lambda$ such that the functor
$G^{(m)}$ is a triangle equivalence for all $m\ge2N+1$.
\end{thmx}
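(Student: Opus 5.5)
The plan is to reduce the statement to the criterion of \cite{CHZ}: the realization functor of an HRS tilt is an equivalence if and only if the backward HRS heart, which embeds exactly and fully faithfully into the original category, is all of it. First I will make the alternating construction explicit. The forward tilt of $(\T,\F)$ in $\A^{(0)}=\mod\Lambda$ gives a heart $\A^{(1)}$ with torsion pair $(\F[1],\T)$. The backward tilt of $\A^{(1)}$ at this pair, formed in $\Db(\A^{(1)})$, gives $\A^{(2)}$. By \cite{CHZ}, $\A^{(2)}$ embeds as a full exact subcategory of $\mod\Lambda$ consisting of the modules that admit a suitable two-step filtration by $\T$ and $\F$ subject to an $\Ext^2$-effaceability condition. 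Iterating, each even heart $\A^{(2k)}$ is an exact full subcategory of $\A^{(2k-2)}$, and so of $\mod\Lambda$. The first step is to show that each $\A^{(2k)}$ is a module category $\mod(\Lambda/I_k)$, or at least contains $\mod(\Lambda/I_k)$ in a controlled way, where $I_k$ is a two-sided ideal. I would guess $I_k=\Ann$ of the even heart, or the trace-type ideal measuring the failure of effaceability. This is the point where the abstract's assertion that the even hearts define an increasing filtration of ideals enters: I would prove that $I_1\subseteq I_2\subseteq\cdots$ lie in a fixed square-zero ideal, and that $G^{(2k+1)}$ is an equivalence exactly when $I_{k+1}=I_k$.

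The second step is stabilization. Once $I_{k+1}=I_k$, the next backward tilt recovers its whole ambient heart, so the \cite{CHZ} criterion holds. The same computation then repeats with identical data, so all later odd functors are equivalences. The even-indexed functors $G^{(2k)}$ are backward tilts, and I would expect them to be treated symmetrically, using the dual criterion with torsion and torsionfree classes exchanged. The goal is then to show that equivalence at step $2N+1$ propagates to all $m\ge 2N+1$: once one realization functor is an equivalence, the induced torsion pair on the new heart is transported to the same pair, so the sequence becomes periodic up to equivalence.

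The third step is the bound $N\le d_\Lambda$. Each strict inclusion $I_k\subsetneq I_{k+1}$ must increase the length of $I_{k+1}$, seen either as a subbimodule or as a sum of right ideals $e_iI$. Since $I_k\subseteq \rad\Lambda$, which follows from the square-zero containment, we have $\ell(e_iI_k)\le \ell(P_i)-1$. Summing over $i$ shows that a strictly increasing chain has at most $\sum_i(\ell(P_i)-1)=d_\Lambda$ strict steps. Hence $N\le d_\Lambda$.

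I expect the main obstacle to be the first step: identifying the even hearts with module categories of quotients, or establishing the ideal filtration, for an arbitrary torsion pair that need not be functorially finite. In that generality there is no silting complex available to compute with. I would first try to show directly that the essential image of $\A^{(2)}$ in $\mod\Lambda$ is closed under submodules, quotients and finite direct sums, contains all simples, and is closed under extensions only up to the obstruction. A subcategory with these closure properties is $\mod(\Lambda/I)$ with $I$ the common annihilator. Failing that, I would define $I_k$ directly as the annihilator and prove the monotonicity and equivalence criterion by hand.
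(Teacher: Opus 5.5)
Your outline is correct, but it counts in a different place than the paper. The paper's proof of this theorem never passes to ideals. For an exact full abelian subcategory $\C\subseteq\A$ containing all simples, it constructs internal projective covers $Q_i^\C$ as quotients of $P_i$ (Proposition~\ref{prop:internal-covers}). It then shows that $\mu_\A(\C)=\sum_i\ell_\A(Q_i^\C)$ drops strictly along a strict chain (Proposition~\ref{prop:strict-mu}), so the argument works in any length category with finite-length projective covers.

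Your count is the same invariant in module language. For $\C=\mod(\Lambda/I)$ one has $Q_i^\C=P_i/e_iI$, hence $\mu_\A(\mod\Lambda)-\mu_\A(\C)=\sum_i\ell(e_iI)$. The paper develops your ideal-theoretic version only afterwards (Proposition~\ref{prop:quotient-description} and the bound $h_\Lambda(\T)\le\ell_{\Lambda^{\mathrm e}}(J_\T)$), where it yields more information. For the theorem itself the square-zero ideal is unnecessary: $\Ann_\Lambda\C_n\subseteq\rad\Lambda$ already holds because every simple module lies in $\T$ or in $\F$, and both are contained in $\C_n$.

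Your stabilization step is the paper's Proposition~\ref{prop:absorption}. To make ``periodic up to equivalence'' precise, you need that the equivalence property does not depend on the chosen realization functor \cite[Corollary~2.8]{CHZ}. The even functors need no dual criterion: by the last clause of Proposition~\ref{prop:chz}, $G^{(2k+2)}$ is an equivalence as soon as $G^{(2k+1)}$ is.

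The step you flag as the main obstacle is formal, and it needs neither silting nor an effaceability description. Let $X\in\C_n$ and $Y\subseteq X$, and lift a composition series of $X/Y$ to $X=X_0\supset X_1\supset\cdots\supset X_s=Y$. Each $X_j\twoheadrightarrow X_j/X_{j+1}$ is then a morphism in $\C_n$ with simple target, and exactness of the embedding puts its kernel $X_{j+1}$ in $\C_n$ (Lemma~\ref{lem:same-simples}). Together with closure under finite sums, this gives $\C_n=\mod(\Lambda/\Ann_\Lambda\C_n)$.

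Your fallback of merely defining $I_k$ as an annihilator would not suffice. Without this identification, $I_{k+1}=I_k$ does not force $\C_{k+1}=\C_k$, so the equivalence criterion would be lost.

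Finally, if $\Lambda$ is not basic, count with one idempotent $e_i$ per simple, or pass to the basic algebra by Morita equivalence. A two-sided ideal $I$ is determined by these $e_iI$: if $u\in e'\Lambda e_i$ and $v\in e_i\Lambda e'$ satisfy $uv=e'$, then $e'I=u\,e_iI$. Bounding $\ell(I_\Lambda)$ by $\ell(\rad\Lambda_\Lambda)$ instead would give only $\sum_i m_i(\ell(P_i)-1)$, where $m_i$ is the multiplicity of $P_i$ in $\Lambda_\Lambda$.
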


Over $\Lambda$, the embedded images of the even hearts have the form
$\C_n=\mod(\Lambda/I_n)$ as embedded subcategories. We call
\[
0=I_0\subseteq I_1\subseteq\cdots\subseteq J_\T,
\qquad J_\T=\Ann_\Lambda\T\cap\Ann_\Lambda\F,
\]
the \emph{HRS ideal filtration}. The common annihilator satisfies
$J_\T^2=0$. Noohi's explicit model \cite{Noohi} gives an intrinsic
description of $I_1=I_\Lambda(\T)$ using only submodules, quotients,
and membership in the torsion pair. On writing $\Lambda_n=\Lambda/I_n$
and transporting the pair to $(\T_n,\F_n)$, the same construction gives
\[
I_{n+1}/I_n=I_{\Lambda_n}(\T_n).
\]
The \emph{HRS height} $h_\Lambda(\T)=\min\{n:I_{n+1}=I_n\}$ satisfies
$h_\Lambda(\T)\le
\min\{d_\Lambda,\len_{\Lambda^{\mathrm{op}}\otimes_k\Lambda}(J_\T)\},$
and every realization functor from step $2h_\Lambda(\T)+1$ onward
is an equivalence. Write $I_\infty$ for the stable value of the filtration.
It need not equal $J_\T$, although every $I_n$ is contained in $J_\T$.

Isomorphism classes of basic two-term silting complexes correspond
bijectively to those of basic support $\tau$-tilting modules and to
functorially finite torsion classes \cite{AIR}. For a two-term silting
complex $\P$, the first HRS heart is a module category over
$\End(\P)$ \cite{KY}. Its
one-step realization is an equivalence exactly when $\P$ is tilting
\cite[Proposition~4.7]{CHZ}; compare \cite[Corollary~5.2]{pv}.

For a two-term silting complex $\P$, Buan and Zhou construct an
algebra epimorphism $\Phi_\P$ whose kernel is generated by negative
morphisms \cite{BZ}. Using the dg description of 
\cite{XYZ}, we show that restriction along $\Phi_\P$ identifies
modules over its target algebra with the essential image of the
second HRS heart in $\mod\Lambda$. Hence $\ker\Phi_\P$ is the first
HRS ideal. Applying
this identification to the reductions of $\P$ gives the following
formulas.

\begin{thmx}\label{thm:intro-calculus}
Let $\P=[P^{-1}\xrightarrow d P^0]$ be two-term silting
and let $(I_n)$ be the HRS ideal filtration of its torsion pair.
Put $\P_n=\P\otimes_\Lambda\Lambda_n$.
\begin{enumerate}
\item The complex $\P_n$ is silting and induces the transported pair.
The ideal $I_{n+1}/I_n$ is generated by the matrix coefficients of
\[
\{H: P_n^0\to P_n^{-1}\mid Hd_n=d_nH=0\},
\]
where $d_n$ is the reduced differential and
$P_n^{-1}=P^{-1}/P^{-1}I_n$,
$ P_n^0= P^0/ P^0I_n$.
\item With $R_n=H^{-1}(\P)\cap P^{-1}I_n$, one has
$H^{-1}(\P_n)=H^{-1}(\P)/R_n$ and
\[
h_\Lambda(\T(\P))=
\min\{n:\Hom_\Lambda(H^0(\P),H^{-1}(\P)/R_n)=0\}.
\]
The quotient $\Lambda/I_\infty$ is the largest quotient over which
the reduction of $\P$ is tilting.
\item Let $h_{\mathrm{ev}}$ and $h_{\mathrm{odd}}$ be the HRS
heights of the two parity towers. The least $s\ge1$ for which every
$G^{(m)}$, $m\ge s$, is an equivalence is
\[
\sigma_\Lambda(\T(\P))=h_{\mathrm{ev}}+h_{\mathrm{odd}}+1,
\qquad h_{\mathrm{odd}}\le h_{\mathrm{ev}}\le h_{\mathrm{odd}}+1.
\]
\end{enumerate}
\end{thmx}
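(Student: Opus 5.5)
We will prove the three parts in order, treating the one-step case as the engine and then iterating it along the reductions $\Lambda\to\Lambda_n$. The one-step input is the identification announced before the statement. Restriction along the Buan--Zhou epimorphism $\Phi_\P$ identifies modules over its target with the essential image $\C_1$ of the second HRS heart in $\mod\Lambda$. Hence $I_1=\ker\Phi_\P$.

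For (i), we first check that $\P_n=\P\otimes_\Lambda\Lambda_n$ is two-term silting over $\Lambda_n$. Since $I_n\subseteq J_\T$ annihilates $\T$ and $\F$, we have $H^0(\P)=H^0(\P_n)$, and $\T(\P)=\Fac H^0(\P)$ is already a torsion class of $\Lambda_n$-modules. We will show that this class coincides with the transported class $\T_n$. The silting property then follows from the bijection of \cite{AIR}: $\T_n$ is functorially finite in $\mod\Lambda_n$, because $\Fac$ of a module is functorially finite, so it corresponds to a unique two-term silting complex. That complex has $H^0$ equal to $H^0(\P)\otimes\Lambda_n$ and $g$-vector the reduction of that of $\P$; reduction of the minimal projective presentation identifies it with $\P_n$. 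The formula for $I_{n+1}/I_n$ then comes from applying the one-step statement to $(\Lambda_n,\P_n)$, using $I_{n+1}/I_n=I_{\Lambda_n}(\T_n)$. It remains to recall how \cite{BZ} describes $\ker\Phi$: it is generated by compositions through degree $-1$ null-homotopic-type maps, namely the images of $H\colon P^0\to P^{-1}$ with $Hd=dH=0$. This description is read off through the dg endomorphism algebra of \cite{XYZ}, and its matrix coefficients generate the ideal.

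For (ii), note that right exactness of $-\otimes\Lambda_n$ gives $P^{-1}_n=P^{-1}/P^{-1}I_n$ and $P^0_n=P^0/P^0I_n$, with induced differential $d_n$. Compute $H^{-1}(\P_n)=\ker d_n$ by the snake lemma applied to $0\to P^{-\bullet}I_n\to P^{-\bullet}\to P^{-\bullet}_n\to0$. The subtle point is showing that the connecting map into $H^0$ vanishes, i.e.\ that $\ker d_n=H^{-1}(\P)/R_n$. We expect to use that $H^0(\P)I_n=0$, and to compare lengths via the silting property of $\P_n$ from (i). The height formula follows because the criterion $I_{n+1}=I_n$ holds exactly when the one-step realization over $\Lambda_n$ is an equivalence, i.e.\ when $\P_n$ is tilting \cite[Proposition~4.7]{CHZ}. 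For a two-term silting complex, being tilting is equivalent to $\Hom(\P_n,\P_n[-1])=0$, which amounts to $\Hom(H^0(\P_n),H^{-1}(\P_n))=0$. Substituting the formula for $H^{-1}(\P_n)$ gives the stated expression. The claim about the largest quotient follows similarly. First, $\P_\infty$ is tilting. Conversely, if $\Lambda/I$ is a quotient over which the reduction is tilting with the same torsion pair, then $I$ lies in $J_\T$. Induction using the minimality of $I_{n+1}/I_n=\ker\Phi_{\P_n}$ then shows $I\supseteq I_n$ for all $n$.

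For (iii), the odd-indexed hearts form a second tower, starting from the first HRS heart $\mod\End(\P)$ with its induced torsion pair, which is again given by a two-term silting complex. It has its own height $h_{\mathrm{odd}}$. The functors $G^{(m)}$ alternate between the two towers, and $G^{(m)}$ is an equivalence exactly when the relevant tower has stabilized at the corresponding index. Counting the indices then yields $h_{\mathrm{ev}}+h_{\mathrm{odd}}+1$. The inequalities $h_{\mathrm{odd}}\le h_{\mathrm{ev}}\le h_{\mathrm{odd}}+1$ come from interleaving. An equivalence at step $m$ forces equivalences at all later steps, since the subsequent hearts are then tilts of equivalent data. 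Hence the stabilization of one parity tower forces the other within one step.

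We expect the main obstacle to be the identification in (i) that $\P_n$ induces exactly the transported torsion pair; the rest is bookkeeping once that is secured.
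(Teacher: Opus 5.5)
Your proposal has a genuine gap in (ii), exactly at the step you call subtle. In the long exact sequence of $0\to\P I_n\to\P\to\P_n\to0$ one has $H^0(\P I_n)=P^0I_n/d(P^{-1}I_n)\cong H^0(\P)\otimes_\Lambda I_n$, since the $P^i$ are projective and $-\otimes_\Lambda I_n$ is right exact. Because $H^0(\P)\to H^0(\P_n)$ is an isomorphism, the connecting map $H^{-1}(\P_n)\to H^0(\P I_n)$ is onto. So $H^{-1}(\P_n)\cong H^{-1}(\P)/R_n$ is equivalent to $H^0(\P)\otimes_\Lambda I_n=0$, that is, to $d(P^{-1}I_n)=P^0I_n$.

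The facts you plan to use are $H^0(\P)I_n=0$ and the silting property of $\P_n$. The first only gives $P^0I_n\subseteq\Im d$, and both hold for ideals where the conclusion fails. Take the path algebra $\Lambda$ of $1\xrightarrow{a}2$, the silting complex $\P=[P_2\oplus P_2\xrightarrow{(a,0)}P_1]$ and $J=(a)$. Then $J$ kills $H^0(\P)=S_1$, and $\P\otimes_\Lambda\Lambda/J$ is silting over $k\times k$. Yet $H^{-1}(\P\otimes_\Lambda\Lambda/J)\cong S_2^2$, while $H^{-1}(\P)/(H^{-1}(\P)\cap P^{-1}J)\cong S_2$. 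So no length comparison built on these data can work; you need the specific shape of $I_n$.

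The paper writes $I_1=\sum\phi L(P^0)$, the sum taken over negative chain maps $L$ and maps $\phi\colon P^{-1}\to\Lambda$; here $dL=0$ and $L(P^0)\subseteq P^{-1}I_1$. For $v\in P^0$, presilting writes the map $u\mapsto v\phi(u)$ as $Xd+dY$, whence $v\phi L(v')=dYL(v')\in d(P^{-1}I_1)$. Induction over $\Lambda_n$, using $I_{n+1}/I_n=\mathfrak b_{\Lambda_n}(\P_n)$, then gives $d(P^{-1}I_n)=P^0I_n$ for all $n$. Your height formula is derived from the formula for $H^{-1}(\P_n)$, so it is unproved as well.

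There are two smaller problems.

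\emph{Silting of $\P_n$ in (i).} Your identification of the AIR complex of $\T$ over $\Lambda_n$ with $\P_n$ is unjustified. A reduced minimal presentation need not stay minimal: in the example, $[P_2\xrightarrow{a}P_1]$ becomes $[S_2\xrightarrow{0}S_1]$, and your $g$-vector claim presupposes minimality. The detour is unnecessary. $\P_n$ is presilting because every $\Lambda_n$-map $P^{-1}_n\to P^0_n$ lifts to some $g=Xd+dY$ over $\Lambda$. Generation follows since $\Lambda_n\in\thick\P_n$, by applying $-\otimes_\Lambda\Lambda_n$ to $\Lambda\in\thick\P$. The adjunction $\Hom_{\Lambda_n}(\P_n,X[j])\simeq\Hom_\Lambda(\P,X[j])$ then gives the transported pair at once, so that is not the real obstacle. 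Granting this, your use of the recursion $I_{n+1}/I_n=I_{\Lambda_n}(\T_n)$ plus the one-step identification yields the matrix formula without comparing to the Buan--Zhou tower, which is slightly more direct than the paper.

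\emph{The universal tilting quotient.} The claim concerns every $J$ with $\P\otimes_\Lambda\Lambda/J$ tilting, and such $J$ need not lie in $J_\T$ ($J=(a)$ above). Nor is ``minimality of $\ker\Phi_{\P_n}$'' an available property: $I_1$ is not an ideal over which the reduction is tilting once the height exceeds one. What is needed is that each generator $\Lambda_n\to\P_I\to\P_{II}[-1]\to\Lambda_n$ of $\mathfrak b_{\Lambda_n}(\P_n)$ becomes zero after $-\otimes_{\Lambda_n}\Lambda/J$. It then factors through a negative morphism in $\add(\P\otimes_\Lambda\Lambda/J)$, which vanishes because that complex is tilting.

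Part (iii) is fine and agrees in substance with the paper.
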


Since every map $H^0(\P)\to H^{-1}(\P)$ has image in $R_1$,
its composite with $H^{-1}(\P)\twoheadrightarrow
 H^{-1}(\P)/R_n$ is zero for $n\ge1$. The connecting
homomorphism therefore gives
\[
\begin{aligned}
\Hom_\Lambda(H^0(\P),H^{-1}(\P)/R_n)
\simeq\ker\bigl[\Ext^1_\Lambda(H^0(\P),R_n)
\longrightarrow\Ext^1_\Lambda(H^0(\P),H^{-1}(\P))\bigr]
\quad(n\ge1).
\end{aligned}
\]
We use this description to obtain bounds in terms of the lengths of
$H^{-1}(\P)$ and $\Hom_\Lambda(H^0(\P),\Lambda)$,
and a lifting criterion for
height at most one. In particular, every two-term silting complex over a
radical-square-zero algebra has height at most one.
Over a hereditary algebra, the same height bound holds for every
torsion pair, without a functorial finiteness assumption. A 
tilted algebra of type $E_6$ has a complex of height two. Hence the supremum of
HRS heights of two-term silting complexes is not a derived invariant.

The number of simple modules and the Loewy length do not bound the
HRS height. Our construction starts from a nonzero finite-dimensional
vector space $V$ with a nilpotent endomorphism $\phi$. A square-zero
extension $\Lambda_\phi=B\ltimes\mathcal M_\phi$ of the Kronecker
algebra $B$, together with a two-term silting complex $\P_\phi$,
realizes the kernel filtration of $\phi$ as an HRS ideal filtration:
\[
I_m=\mathcal M_\phi(\ker\phi^m),\qquad
h_{\Lambda_\phi}(\T(\P_\phi))=\nu(\phi),
\]
where $\nu(\phi)$ is the nilpotency index. Here
$\mathcal M_\phi(U)$ is the subbimodule obtained from a
$\phi$-invariant subspace $U\subseteq V$ by the same construction
(Section~\ref{subsec:nilpotent-operator-families}).
Varying the Jordan type yields the following result.

\begin{thmx}
	\label{thm:intro-examples}
	Let $k$ be a field and let $c_0\ge c_1\ge\cdots\ge c_{N-1}>0$ be
	integers, with $N\ge1$. 
	\begin{enumerate}
	\item There exist a finite-dimensional basic
	$k$-algebra $\Lambda$ with two simple modules and Loewy length three,
	and a basic two-term silting complex $\P$, such that
	\[
	\dim_k\Lambda=4+4\sum_{m=0}^{N-1}c_m,\qquad
	h_\Lambda(\T(\P))=N.
	\]
	\item For its HRS ideal filtration $\{I_m\}$, put $\Lambda_m=\Lambda/I_m$
	and $\P_m=\P\otimes_\Lambda\Lambda_m$. Then, for $0\le m<N$,
	\[
	\dim_k(I_{m+1}/I_m)=4c_m,\qquad
	\dim_k\Hom_{\Kb(\proj\Lambda_m)}(\P_m,\P_m[-1])=2c_m.
	\]
	\end{enumerate}
\end{thmx}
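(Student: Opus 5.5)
The plan is to build the algebra from a nilpotent operator, following the construction sketched just before the statement. Take $V=\bigoplus_{j}V_j$, where $V_j$ is a Jordan block whose size equals the number of indices $m$ with $c_m\ge j$; the partition $(c_m)$ is read as the sequence of differences $c_m=\dim\ker\phi^{m+1}-\dim\ker\phi^m$. Choose $\phi$ nilpotent on $V$ with exactly this kernel filtration. Then $\nu(\phi)=N$, and $\dim\ker\phi^{m+1}/\ker\phi^m=c_m$ for $0\le m<N$, which is valid because $c_0\ge c_1\ge\cdots$. Put $\Lambda=\Lambda_\phi=B\ltimes\mathcal M_\phi$ with $B$ the Kronecker algebra, and $\P=\P_\phi$ as in Section~\ref{subsec:nilpotent-operator-families}. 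I expect $\mathcal M_\phi(U)$ to have dimension $4\dim U$. That gives $\dim_k\Lambda=\dim B+4\dim V=4+4\sum c_m$, since the Kronecker algebra has dimension $4$.

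The algebra has two simples because $\mathcal M_\phi$ lies in the radical, so the simples are those of $B$. For the Loewy length, $\rad B\cdot\mathcal M_\phi$ or $\mathcal M_\phi\cdot\rad B$ should be nonzero while $\mathcal M_\phi^2=0$ and $(\rad B)^2=0$. This gives Loewy length exactly three once $V\ne0$, which holds since $N\ge1$. Basicness of $\Lambda$ and of $\P$ is inherited from $B$ and from the construction.

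For the filtration I would invoke the stated identity $I_m=\mathcal M_\phi(\ker\phi^m)$ and $h=\nu(\phi)=N$. Part (ii) then reads $I_{m+1}/I_m\cong\mathcal M_\phi(\ker\phi^{m+1})/\mathcal M_\phi(\ker\phi^m)$, of dimension $4c_m$ by the dimension count above. This requires checking that $U\mapsto\mathcal M_\phi(U)$ is exact, i.e.\ additive in dimension. That should follow from the construction being a direct sum of copies of $U$ placed in four arrow positions.

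For the Hom dimension I would use Theorem~\ref{thm:intro-calculus}(i). The reduction $\Lambda_m=\Lambda/I_m$ is again of the form $B\ltimes\mathcal M_{\bar\phi}$ with $\bar\phi$ induced on $V/\ker\phi^m$, and $\P_m$ is the corresponding $\P_{\bar\phi}$. So it suffices to treat $m=0$ for general $\phi$: show that $\Hom_{\Kb}(\P,\P[-1])$, i.e.\ the maps $H:P^0\to P^{-1}$ with $Hd=dH=0$, has dimension $2\dim\ker\phi$. Since $\dim\ker\bar\phi=c_m$, this gives $2c_m$. The main obstacle is this explicit computation. I would write $P^0,P^{-1}$ as sums of the two indecomposable projectives, express $H$ as a matrix with entries in $e_i\Lambda e_j$, and impose the two equations. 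I expect $dH=0$ and $Hd=0$ to force the entries into $\mathcal M_\phi$ and to cut out two copies of $\ker\phi$, one for each Kronecker arrow. This is consistent with the matrix coefficients generating $\mathcal M_\phi(\ker\phi)$ of dimension $4\dim\ker\phi$, the two-sided closure doubling the dimension.
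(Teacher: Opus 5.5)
Your proposal follows the paper's proof: choosing $\phi$ of Jordan type conjugate to $(c_m)$ is Corollary~\ref{cor:jordan-profile-realization}, and combining $I_m=\mathcal M_\phi(\ker\phi^m)$ (Theorem~\ref{thm:nilpotent-operator-defect}) with the self-similarity $\Lambda_\phi/\mathcal M_\phi(\ker\phi^m)\simeq\Lambda_{\bar\phi}$, $\P_\phi\otimes\Lambda_{\bar\phi}\simeq\P_{\bar\phi}$ (Lemma~\ref{lem:nilpotent-quotient-compatibility}) and the computation $\ker\Theta_d\simeq(\ker\phi)^{2}$ (Proposition~\ref{prop:nilpotent-first-defect}) is exactly how Corollary~\ref{cor:nilpotent-profile-dimensions} is proved. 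A few points need tightening: $(\rad\Lambda_\phi)^3=0$ requires $\rad B\cdot\mathcal M_\phi\cdot\rad B\subseteq e_0\mathcal M_\phi e_1=0$, not just $\mathcal M_\phi^2=(\rad B)^2=0$; basicness of $\P_\phi$ needs an actual argument, such as the local-endomorphism-ring argument in Proposition~\ref{prop:nilpotent-silting-realization}; and in the negative-Hom computation the entries lie in $e_1\mathcal M_\phi e_0$ automatically because $e_1Be_0=0$, with the two copies of $\ker\phi$ sitting in the columns of the first $E_0$-summands of $X_\phi$ and $Y_\phi$ rather than one per arrow.
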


In particular, taking $c_0=\cdots=c_{N-1}=1$ gives algebras
$\Lambda_N$ of dimension $4N+4$ and complexes $\P_N$ of height $N$,
with every nonzero HRS ideal quotient of dimension four.
The filtration formula is Theorem~\ref{thm:nilpotent-operator-defect};
the layer dimensions and their realization follow from
Corollaries~\ref{cor:nilpotent-profile-dimensions}
and~\ref{cor:jordan-profile-realization}.

Section~\ref{sec:preliminaries} fixes the HRS conventions and recalls
	the second-heart embedding. Section~\ref{sec:iteration} proves finite
	stabilization using internal projective covers, and
	Section~\ref{sec:refined-invariants} develops the intrinsic ideal
	filtration, its stopping criterion and its general height bounds.
Section~\ref{sec:silting-hrs-ideals} establishes silting reduction,
	the matrix-coefficient formula and the exact parity-dependent
	stabilization index. Section~\ref{sec:kernel-lifting} proves the
	kernel-filtration theorem, and gives an example over a tilted algebra
	of type $E_6$ showing that HRS height is not a derived invariant. Finally,
	Section~\ref{sec:arbitrary-height} realizes arbitrary heights and
	prescribed layer dimensions by square-zero extensions associated
	with nilpotent operators.

\section{Alternating HRS tilts}\label{sec:preliminaries}
Throughout, $k$ is a field and $\Lambda$ is a finite-dimensional
basic $k$-algebra. 
We write $\mod\Lambda$ for the category of finite-dimensional right
$\Lambda$-modules. We use cohomological grading and write $\len_\A$
for composition length in a length category $\A$. The notations
$\add$, $\Fac$, and $\Sub$ denote additive closure, quotients of
finite direct sums, and submodules of finite direct sums, respectively.
Put $D=\Hom_k(-,k)$ and $\nu=D\Hom_\Lambda(-,\Lambda)$ on
projective complexes. A complex $\P\in\Kb(\proj\Lambda)$ is
\emph{silting} if $\Hom(\P,\P[i])=0$ for $i>0$ and
$\thick(\P)=\Kb(\proj\Lambda)$. It is \emph{tilting} if the
vanishing also holds for $i<0$. Here $\thick$ denotes closure
under shifts, cones, and direct summands.
A complex is called \emph{basic} if its indecomposable direct
summands are pairwise nonisomorphic.

All subcategories are full and closed under isomorphisms.
A \emph{torsion pair} in an abelian category $\A$ is a pair
$(\T,\F)$ of subcategories such that $\Hom_\A(\T,\F)=0$ and
every $X\in\A$ admits a short exact sequence
\[
0\longrightarrow tX\longrightarrow X\longrightarrow fX
\longrightarrow0,
\qquad tX\in\T,\quad fX\in\F.
\]
The torsion pair is \emph{split} if this sequence splits for
every $X\in\A$, equivalently if every $X$ is isomorphic to
$tX\oplus fX$. Equivalently, $\Ext^1_\A(F,T)=0$ for all
$F\in\F$ and $T\in\T$.
The forward HRS tilt of this pair is the heart
\[
\B=\{X\in\Db(\A):H^{-1}(X)\in\F,\ H^0(X)\in\T,
\ H^i(X)=0\ (i\ne-1,0)\}.
\]
It has the induced torsion pair $(\F[1],\T)$. The symbol $[1]$
here denotes the shift in $\Db(\A)$, so $\F[1]$ is a subcategory
of the abelian category $\B$.

We next work in $\Db(\B)$ and denote its shift by $\Sigma_\B$.
The associated backward tilt is
\[
\A'=\{Y\in\Db(\B):H^0_\B(Y)\in\T,\ H^1_\B(Y)\in\F[1],
\ H^i_\B(Y)=0\ (i\ne0,1)\}.
\]
Its induced torsion pair is $(\T,\Sigma_\B^{-1}(\F[1]))$.
The two tilts take place in different derived categories, so their
composite need not recover $\A$.

A realization functor for a bounded heart $\H\subseteq\D$ is a
$t$-exact triangle functor $\Db(\H)\to\D$ extending its inclusion.
All ambient categories used here are bounded derived categories
of abelian categories. Their standard enhancements provide realization
functors for the HRS hearts under consideration
\cite{KV,pv,CHZ,b}. 

We use the following result  to regard $\A'$
as an exact full abelian subcategory of $\A$.

\begin{proposition}[\cite{CHZ}, Propositions 3.1--3.3]
\label{prop:chz}
Let $G:\Db(\B)\to\Db(\A)$ be a realization functor.
Its restriction $G|_{\A'}:\A'\to\A$ is exact and fully faithful.
It identifies the induced torsion pair with $(\T,\F)$.
The following conditions are equivalent:
\begin{enumerate}
\item $G$ is an equivalence.
\item $G|_{\A'}$ is an equivalence.
\item The essential image of $G|_{\A'}$ is $\A$.
\end{enumerate}
In this case every realization functor for the associated backward
tilt is also an equivalence.
\end{proposition}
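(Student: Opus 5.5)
This is Propositions 3.1--3.3 of \cite{CHZ}, so one could simply cite it; here is how I would reconstruct the argument. The overall idea is to compare Hom and Ext groups computed in $\Db(\B)$ with those in $\Db(\A)$, using that $G$ is $t$-exact and restricts to the inclusion on $\B$.

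\emph{Step 1: $G$ is fully faithful on Homs between objects of $\B$ in degrees $\le1$.} For $X,Y\in\B$ the maps $\Hom_{\Db(\B)}(X,\Sigma_\B^iY)\to\Hom_{\Db(\A)}(X,Y[i])$ are bijective for $i\le1$. For $i=0$ this holds because $G$ extends the inclusion of $\B$; for $i<0$ both sides vanish, since the target is the Hom between objects of a heart; for $i=1$ both sides classify extensions in $\B$, which are the same as triangles in $\Db(\A)$ with terms in $\B$, and the standard realization-functor argument of \cite{BBD} shows $G$ induces this identification. Injectivity for $i=2$ is a standard effaceability argument as well.

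\emph{Step 2: $G$ maps $\A'$ into $\A$, exactly and fully faithfully.} Let $Y\in\A'$. It sits in a triangle $T\to Y\to \Sigma_\B^{-1}(F[1])\to$ with $T\in\T$ and $F\in\F$. Applying $G$ gives a triangle $T\to G(Y)\to F\to$ in $\Db(\A)$, and $T,F\in\A$, so $G(Y)$ is an extension in $\A$ and lies in $\A$. Exactness of $G|_{\A'}$ follows because short exact sequences in $\A'$ are triangles in $\Db(\B)$, and a triangle in $\Db(\A)$ with all three terms in $\A$ is a short exact sequence. For full faithfulness I would run the five lemma on the torsion-pair triangles: Homs between $\T$, $\Sigma_\B^{-1}\F[1]$ and their $\Sigma_\B$-shifts reduce to $\Hom^i_{\Db(\B)}$ between objects of $\B$ with $i\le1$ (plus injectivity at $i=2$), and Step 1 handles these. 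This bookkeeping is the main technical obstacle. The identification of the induced torsion pair with $(\T,\F)$ is then immediate from the description of $G(Y)$ above.

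\emph{Step 3: the equivalences.} (i)$\Rightarrow$(ii)$\Rightarrow$(iii) are clear, once one notes that a $t$-exact equivalence maps the heart $\A'$ onto the heart obtained by tilting $\B\subseteq\Db(\A)$ backward, which is $\A$. For (iii)$\Rightarrow$(i), I would use that $\A'$ is a heart in $\Db(\B)$ and $G|_{\A'}\colon\A'\to\A$ is an equivalence. By Step 1, together with the fact that both $\A'$ and $\B$ generate, $G$ is fully faithful on Homs between objects of $\A'$ in degrees $\le1$. Essential surjectivity in all degrees then gives that Ext-groups in $\A$ are effaceable, and the Beilinson criterion shows $G$ is an equivalence, since the heart $\A$ generates $\Db(\A)$. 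The final claim then follows: the backward realization functor composed with $G$ is compatible with the equivalence $\A'\simeq\A$, so it is also an equivalence.
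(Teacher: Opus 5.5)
Your Steps~1 and~2 are sound. The five-lemma bookkeeping for $Y,Y'\in\A'$ does close up using bijectivity in degrees $\le1$ and injectivity in degree~$2$ between objects of $\B$. The implications (i)$\Rightarrow$(ii)$\Rightarrow$(iii) and the final claim are also fine.

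The gap is in (iii)$\Rightarrow$(i). Beilinson's criterion is about a functor $\Db(\H)\to\D$ out of the derived category of the heart. There the source Hom-groups are Yoneda Ext-groups, which are automatically effaceable, so only effaceability in $\D$ has to be checked. You apply it to $G$ with the hearts $\A'\subseteq\Db(\B)$ and $\A\subseteq\Db(\A)$, but the source of $G$ is $\Db(\B)$, not $\Db(\A')$. Effaceability of Ext-groups in $\A$ holds unconditionally, so it is not what (iii) supplies. Combined with (iii), it yields by induction on $n$ only surjectivity of $\Hom_{\Db(\B)}(Y,\Sigma_\B^nY')\to\Hom_{\Db(\A)}(GY,GY'[n])$, by lifting effacing epimorphisms from $\A$ to $\A'$. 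Injectivity for $n\ge2$ would require effaceability in $\Db(\B)$ relative to $\A'$, which you never establish. The principle you implicitly use is false in general. Take the dg algebra $E=k[\epsilon]/(\epsilon^2)$ with $|\epsilon|=-1$ and zero differential. Restriction along $k\to E$ is a $t$-exact functor $\mathrm D_{\mathrm{fd}}(E)\to\Db(\mod k)$ that is the identity on the hearts $\mod k$ and bijective in degrees $\le1$, and its target has effaceable Ext-groups. Yet it kills $\Hom_{\mathrm D(E)}(k,k[2])\simeq k$.

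One way to close the gap is to mix the two hearts. For $X\in\A'$ and $Y\in\B$, show by induction on $n$ that $\Hom_{\Db(\B)}(X,\Sigma_\B^nY)\to\Hom_{\Db(\A)}(GX,Y[n])$ is bijective; the cases $n\le0$ follow from Step~1 and the five lemma. For $n\ge1$, surjectivity: $Y$ is an extension of an object of $\T$ by an object of $\F[1]$, so every class in the target is killed by an epimorphism in $\A$. By (iii) this lifts to an epimorphism $X''\twoheadrightarrow X$ in $\A'$, and the induction hypothesis applied to its kernel gives surjectivity. Injectivity: $X$ is an extension of an object of $\Sigma_\B^{-1}(\F[1])$ by an object of $\T$, so every class in the source is killed by a monomorphism $Y\hookrightarrow Y'$ in $\B$. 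The induction hypothesis applied to $Y'$ and the cokernel then gives injectivity. D\'evissage in both variables makes $G$ fully faithful, and its essential image is then a triangulated subcategory containing $\A$, so $G$ is an equivalence. Alternatively, take a backward realization $G'$. Up to the identifications of hearts, $GG'$ and $G'(GG')^{-1}G$ are realization functors of the standard hearts of $\Db(\A)$ and $\Db(\B)$, hence equivalences, and these two facts together force $G$ to be an equivalence.
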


By \cite[Proposition~3.2]{CHZ}, the essential image of $G|_{\A'}$ depends only
on the torsion pair, independently of the chosen realization functor. Under the chosen identification of $G$ with the inclusion on $\B$,
we have
\[
G(T)=T,\qquad G\bigl(\Sigma_\B^{-1}(F[1])\bigr)=F
\quad(T\in\T,\ F\in\F).
\]
Thus the essential image of $G|_{\A'}$ contains both $\T$ and
$\F$. We iterate this construction as follows.

\begin{definition}\label{def:iterative}
Start with $\A^{(0)}=\A$ and the torsion pair $(\T,\F)$.
Form $\A^{(2n+1)}$ by the forward tilt of $\A^{(2n)}$.
Then form $\A^{(2n+2)}$ by the associated backward tilt, with
its induced torsion pair. Each heart is formed in the bounded
derived category of the preceding heart. Denote the chosen
realization functors by
\[
G^{(m)}:\Db(\A^{(m)})\longrightarrow\Db(\A^{(m-1)}),
\qquad m\ge1.
\]
\end{definition}

\section{Finite height and stabilization}\label{sec:iteration}
Let $\A$ be a length abelian category with simple objects
$S_1,\ldots,S_r$ up to isomorphism. Assume that each $S_i$ has a
finite-length projective cover $\pi_i:P_i\twoheadrightarrow S_i$.
An \emph{exact full abelian subcategory} of $\A$ is an abelian
subcategory whose inclusion is exact. We do not assume that it is
extension closed in $\A$. 

Let $\ell\colon\C\to\A$ be an exact fully faithful functor between abelian
categories.  Then $\ell$ reflects zero objects, monomorphisms, epimorphisms,
and exact sequences.  In particular, after identifying $\C$ with its
essential image, the kernel and cokernel in $\A$ of a morphism between
objects of $\C$ belong to $\C$ and agree with its kernel and cokernel in
$\C$.
Moreover, if $\C'\subseteq\C\subseteq\A$ are full abelian subcategories and both
inclusions into $\A$ are exact, then the inclusion
$\C'\subseteq\C$ is also exact. 

\begin{lemma}\label{lem:same-simples}
An exact full abelian subcategory $\C\subseteq\A$ containing all
$S_i$ is closed under subobjects and quotients in $\A$.
It is a length category with exactly the simple objects
$S_1,\ldots,S_r$.
\end{lemma}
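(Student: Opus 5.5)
The plan is induction on the length of objects of $\C$ in $\A$: every object of $\C$ is shown to have a composition series in $\A$ whose terms lie in $\C$. Since each $S_i$ lies in $\C$ and is simple in $\A$, it is also simple in $\C$: a subobject in $\C$ is a monomorphism in $\C$, hence (exactness of the inclusion) a monomorphism in $\A$, so it is $0$ or an isomorphism. Here a monomorphism in $\C$ is a morphism with zero kernel in $\C$; since the inclusion preserves kernels, its kernel in $\A$ is also zero.

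The key step is that every nonzero $X\in\C$ has a simple quotient in $\C$. Pick a simple quotient $X\twoheadrightarrow S_i$ in $\A$ (it exists since $\A$ is length). This is a morphism between objects of $\C$, and by fullness it is a morphism of $\C$. Its kernel in $\A$ is its kernel in $\C$, by the remark before the lemma, so $K=\ker(X\to S_i)$ lies in $\C$ and satisfies $\len_\A K<\len_\A X$. By induction on $\len_\A$, $K$ has a composition series in $\A$ with terms in $\C$ and simple factors among the $S_j$. Appending $K\subseteq X$, whose factor $S_i$ lies in $\C$, gives such a series for $X$. Hence every object of $\C$ has finite length in $\C$, bounded by $\len_\A$, and its $\C$-composition factors are the $S_j$. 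This makes $\C$ a length category. Any simple object $Y$ of $\C$ then has a $\C$-quotient $Y\twoheadrightarrow S_j$ that is nonzero, hence an isomorphism, so the simple objects of $\C$ are exactly $S_1,\ldots,S_r$.

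Closure under subobjects and quotients is the main obstacle, because $\C$ is not assumed extension closed. The idea is that a subobject $U\subseteq X$ in $\A$ with $X\in\C$ should be the image of a morphism between objects of $\C$. I would use induction on $\len_\A X$, taking the maximal counterexample approach: suppose $U\subseteq X$ with $U\notin\C$. If $U\ne X$, choose $U\subseteq U'\subsetneq X$ with $X/U'\simeq S_i$ simple. Then $U'$ is the kernel of the $\C$-morphism $X\to X/U'\simeq S_i$, so $U'\in\C$, and we can apply induction to $U\subseteq U'$ since $\len_\A U'<\len_\A X$. Thus every subobject of $X$ lies in $\C$. For a quotient $X\twoheadrightarrow Q$, write $Q\simeq X/U$. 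Since $U\in\C$, the inclusion $U\to X$ is a morphism in $\C$, and its cokernel in $\C$ agrees with its cokernel $Q$ in $\A$, so $Q\in\C$. Note that this argument already needs only that the simple quotients $S_i$ lie in $\C$, and the induction is well-founded because $\A$ is a length category.
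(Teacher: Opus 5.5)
Your proposal is correct and is essentially the paper's argument: you reach a subobject of $X\in\C$ by passing through kernels of epimorphisms onto simple objects $S_i\in\C$, and you obtain quotients as cokernels. The differences are minor: you argue by induction on $\len_\A X$ rather than lifting a composition series of $X/Y$, and you prove the length-category statement separately first, whereas the paper deduces it from closure under subobjects.
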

\begin{proof}
Let $Y\subseteq X$ in $\A$, with $X\in\C$. A composition series
of $X/Y$ lifts to a chain
\[
X=X_0\supset X_1\supset\cdots\supset X_s=Y
\]
with simple successive factors. If $X_j\in\C$, then
$X_j\twoheadrightarrow X_j/X_{j+1}$ is a morphism in $\C$.
Since the inclusion is exact, its kernel $X_{j+1}$ belongs to $\C$.
Induction gives $Y\in\C$, and the cokernel of $Y\hookrightarrow X$
gives $X/Y\in\C$.

It follows that every composition series in $\A$ of an object of
$\C$ is also a composition series in $\C$. Thus $\C$ is a length
category with simple objects $S_1,\ldots,S_r$.
\end{proof}

\begin{proposition}\label{prop:internal-covers}\label{cor:proj-generator}
Let $\C\subseteq\A$ be an exact full abelian subcategory containing all $S_i$.
Every $S_i$ has a projective cover
$q_i^\C:Q_i^\C\twoheadrightarrow S_i$ in $\C$, with $Q_i^\C$
a quotient of $P_i$ in $\A$. The object
$Q^\C=\bigoplus_iQ_i^\C$ is a projective generator.
\end{proposition}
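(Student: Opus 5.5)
The plan is to build the projective cover of $S_i$ in $\C$ as the largest quotient of $P_i$ that still lies in $\C$. Consider the set of subobjects $K\subseteq P_i$ such that $P_i/K\in\C$. This set is nonempty, since it contains $K=\ker\pi_i$ with $P_i/K\cong S_i\in\C$. It is also closed under finite intersections: $P_i/(K\cap K')$ embeds in $P_i/K\oplus P_i/K'$, which lies in $\C$, and $\C$ is closed under subobjects by Lemma~\ref{lem:same-simples}. Because $P_i$ has finite length, there is a unique minimal such $K_i$. Put $Q_i^\C=P_i/K_i$. Since $K_i\subseteq\ker\pi_i$, the map $\pi_i$ factors through an epimorphism $q_i^\C:Q_i^\C\twoheadrightarrow S_i$. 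Moreover $Q_i^\C$ is local, being a quotient of the local object $P_i$, so $q_i^\C$ is an essential epimorphism.

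\emph{Projectivity in $\C$.} This is the main step. Let $g:X\twoheadrightarrow Y$ be an epimorphism in $\C$ and $f:Q_i^\C\to Y$ a morphism; epimorphisms in $\C$ are epimorphisms in $\A$ because the inclusion is exact. By projectivity of $P_i$ in $\A$, the composite $P_i\to Q_i^\C\to Y$ lifts to some $\tilde f:P_i\to X$. The point is to show that $\tilde f$ kills $K_i$. The image $\tilde f(P_i)\subseteq X$ lies in $\C$ by Lemma~\ref{lem:same-simples}, so $P_i/\ker\tilde f\in\C$, and minimality gives $K_i\subseteq\ker\tilde f$. Hence $\tilde f$ factors through $Q_i^\C$. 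The factored map lifts $f$, because $P_i\to Q_i^\C$ is an epimorphism. Thus $Q_i^\C$ is projective in $\C$, and $q_i^\C$ is a projective cover in $\C$. In $\C$ the cover is essential for the same reason: the radical of $Q_i^\C$ is its unique maximal subobject, and every subobject is in $\C$.

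\emph{Generator.} Every nonzero $X\in\C$ has a simple quotient $S_i$, and its top is computed in $\A$. Lifting along $q_i^\C$ gives a map $Q_i^\C\to X$ whose image is not contained in the radical of $X$. Taking a sum over a basis of the top, we get $\bigoplus Q_{i}^{\C}{}^{\,n_i}\to X$ with image $M$ satisfying $M+\rad X=X$. Since $X$ has finite length, this forces $M=X$, so the map is surjective. Equivalently, one can argue by induction on length using that $\Hom_\C(Q^\C,-)$ is exact and nonzero on every simple. Either way, $Q^\C$ is a projective generator. The only delicate point is the intersection-closure and minimality argument in the first step, which relies on Lemma~\ref{lem:same-simples} for closure under subobjects.
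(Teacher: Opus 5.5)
Your proposal is correct and follows essentially the paper's argument. The paper also takes $Q_i^\C$ to be the largest quotient of $P_i$ lying in $\C$, shows that every map $P_i\to X$ with $X\in\C$ factors through it, and deduces projectivity and essentiality from this together with the unique maximal subobject; the generator then comes from induction on length. The one difference is cosmetic: the paper picks a quotient $u:P_i\twoheadrightarrow Q$ of maximal length and then proves it universal using the image of $(u,f)$ in $Q\oplus X$, which is $P_i/(\ker u\cap\ker f)$ and so is exactly your intersection-closure trick, whereas you build the least kernel first.
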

\begin{proof}
Since $S_i\in\C$ and quotient lengths are bounded by
$\len_\A(P_i)$, we may choose an epimorphism
$u:P_i\twoheadrightarrow Q$ with $Q\in\C$ of maximal length among
the quotients of $P_i$ lying in $\C$. Let
$f:P_i\to X$, where $X\in\C$, and put
$Y=\Im(u,f)\subseteq Q\oplus X$. By
Lemma~\ref{lem:same-simples}, $Y\in\C$. The projection
$p:Y\twoheadrightarrow Q$ is an epimorphism. Since $Y$ is a
quotient of $P_i$, maximality gives
\[
\len_\A(Q)\le\len_\A(Y)\le\len_\A(Q).
\]
Thus $p$ is an isomorphism and $f$ factors uniquely through $u$.
We obtain a natural isomorphism
\[
\Hom_\C(Q,-)\simeq\Hom_\A(P_i,-)|_\C.
\]
Since $P_i$ is projective and the inclusion $\C\hookrightarrow\A$
is exact, this functor is exact. Hence $Q$ is projective in $\C$.

The kernel of the projective cover $\pi_i$ is the unique maximal
subobject of $P_i$. Since $Q\ne0$, we have
$\ker u\subseteq\ker\pi_i$. Thus $\pi_i$ induces
$q:Q\twoheadrightarrow S_i$. Every proper subobject of $Q$ is
contained in $\ker q=(\ker\pi_i)/(\ker u)$. Thus $q$ is
an essential epimorphism, and hence a projective cover in $\C$.

Set $Q_i^\C=Q$. Since $\C$ is a length category and every simple
object is a quotient of one of the $Q_i^\C$, induction on length
shows that $Q^\C$ is a projective generator.
\end{proof}
\begin{corollary}
Every $X\in\C$ has a presentation $Q_1\to Q_0\to X\to0$
with $Q_0,Q_1\in\add Q^\C$.
\end{corollary}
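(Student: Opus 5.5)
Given $X\in\C$, the plan is to first build an epimorphism onto $X$ from an object of $\add Q^\C$, then repeat the construction on its kernel. The only inputs are Proposition~\ref{prop:internal-covers} and the exactness of the inclusion $\C\subseteq\A$. Lemma~\ref{lem:same-simples} guarantees that kernels computed in $\A$ stay in $\C$.

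\textbf{Epimorphism onto $X$.} Since $Q^\C$ is a projective generator of $\C$, for each $X\in\C$ I will produce an epimorphism $Q_0\twoheadrightarrow X$ with $Q_0\in\add Q^\C$. I argue by induction on $\len_\A(X)$, which equals $\len_\C(X)$ by Lemma~\ref{lem:same-simples}.
\begin{enumerate}
\item If $X=0$, take $Q_0=0$.
\item Otherwise choose a simple quotient $X\twoheadrightarrow S_i$ with kernel $X'\in\C$.
\item Lift the projective cover $q_i^\C:Q_i^\C\twoheadrightarrow S_i$ along $X\twoheadrightarrow S_i$, using projectivity of $Q_i^\C$ in $\C$, to get $g:Q_i^\C\to X$.
\item By induction choose an epimorphism $Q_0'\twoheadrightarrow X'$ with $Q_0'\in\add Q^\C$.
\item The induced map $Q_0'\oplus Q_i^\C\to X$ is then an epimorphism, by the snake lemma or a direct diagram chase in the abelian category $\C$.
\end{enumerate}
Alternatively one can use finiteness of $\Hom_\A(P_i,X)$, and hence of $\Hom_\C(Q_i^\C,X)$, but the length induction avoids any finiteness assumption on Hom-spaces.

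\textbf{Second step.} Apply the first step to $K=\ker(Q_0\to X)$, which lies in $\C$ because kernels in $\C$ are computed as in $\A$. This gives $Q_1\twoheadrightarrow K$ with $Q_1\in\add Q^\C$. Composing with $K\hookrightarrow Q_0$ yields the exact sequence $Q_1\to Q_0\to X\to0$ in $\C$, which is also exact in $\A$.

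The only point requiring care is making ``projective generator'' concrete, namely that every object is a quotient of a \emph{finite} sum of copies of the $Q_i^\C$. The length induction above handles this, since each step removes one composition factor.
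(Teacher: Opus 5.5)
Your proposal is correct and follows the paper's proof essentially step for step. Both arguments induct on length using a simple quotient $0\to K\to X\to S_i\to0$, lift $q_i^\C$ by projectivity, and combine this lift with an epimorphism onto the kernel to get an epimorphism $Q_0\twoheadrightarrow X$ with $Q_0\in\add Q^\C$; the construction is then repeated on $\ker(Q_0\to X)$ to obtain $Q_1$.
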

\begin{proof}
For $X\ne0$ in $\C$, choose an exact sequence
$0\to K\to X\to S_i\to0$. Projectivity lifts $q_i^\C$ to
$Q_i^\C\to X$, and induction on length gives an epimorphism
$Q'\twoheadrightarrow K$ with $Q'\in\add Q^\C$. The induced map
$Q'\oplus Q_i^\C\to X$ is epic. Applying this construction to
its kernel gives the required presentation.
\end{proof}

\begin{proposition}\label{prop:strict-mu}
Set $\mu_\A(\C)=\sum_i\len_\A(Q_i^\C)$.
If $\C'\subseteq\C$ are exact full abelian subcategories containing
all $S_i$, then $\mu_\A(\C')\le\mu_\A(\C)$.
Equality holds if and only if $\C'=\C$.
\end{proposition}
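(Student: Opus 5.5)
The plan is to compare the internal projective covers of $\C'$ and $\C$ through their universal description in the proof of Proposition~\ref{prop:internal-covers}. There, $Q_i^\C$ is the largest quotient of $P_i$ lying in $\C$: every map $P_i\to X$ with $X\in\C$ factors uniquely through $u_i^\C:P_i\twoheadrightarrow Q_i^\C$. By the remark on nested subcategories, $\C'\subseteq\C$ is again an exact full abelian subcategory of $\A$ containing all $S_i$, so the proposition applies to both $\C'$ and $\C$.

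\textbf{Step 1: the inequality.} Since $Q_i^{\C'}\in\C'\subseteq\C$, the epimorphism $u_i^{\C'}:P_i\twoheadrightarrow Q_i^{\C'}$ factors through $u_i^\C$. This gives an epimorphism $Q_i^\C\twoheadrightarrow Q_i^{\C'}$, hence $\ker u_i^\C\subseteq\ker u_i^{\C'}$ and $\len_\A(Q_i^{\C'})\le\len_\A(Q_i^\C)$. Summing over $i$ gives $\mu_\A(\C')\le\mu_\A(\C)$.

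\textbf{Step 2: the equality case.} If $\C'=\C$, equality is trivial. Conversely, suppose $\mu_\A(\C')=\mu_\A(\C)$. Each termwise inequality from Step 1 is then an equality, so each epimorphism $Q_i^\C\twoheadrightarrow Q_i^{\C'}$ is an isomorphism. Thus $Q_i^\C\in\C'$ for every $i$, and therefore $Q^\C\in\C'$.

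\textbf{Step 3: recovering all of $\C$.} Take $X\in\C$. By the corollary to Proposition~\ref{prop:internal-covers}, there is a presentation $Q_1\xrightarrow{g}Q_0\to X\to0$ with $Q_0,Q_1\in\add Q^\C$. The category $\C'$ contains $Q^\C$ and is closed under finite direct sums, so $g$ is a morphism in $\C'$. Because $\C'\hookrightarrow\A$ is exact, the cokernel of $g$ computed in $\A$, namely $X$, is its cokernel in $\C'$. Since $\C'$ is closed under isomorphisms, $X\in\C'$. Hence $\C=\C'$.

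The main subtlety is Step 3. It needs closure of $\C'$ under cokernels computed in $\A$, and this is exactly what exactness of the inclusion provides. Extension closure is not assumed, so one cannot instead argue by building $X$ from simples; the cokernel argument is what avoids that gap.
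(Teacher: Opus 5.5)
Your proof is correct and follows the paper's argument. You compare the covers via an epimorphism $Q_i^\C\twoheadrightarrow Q_i^{\C'}$, deduce isomorphisms from equality of $\mu_\A$, and recover $\C$ from presentations by $\add Q^\C$ using exactness of $\C'\hookrightarrow\A$. The one variation is Step 1: you get the epimorphism from the universal factorization property of $P_i\twoheadrightarrow Q_i^\C$ established in the proof of Proposition~\ref{prop:internal-covers}, whereas the paper lifts $q_i^\C$ along $q_i^{\C'}$ using projectivity of $Q_i^\C$ in $\C$ and then uses that $q_i^{\C'}$ is essential to get surjectivity.
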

\begin{proof}
Since $q_i^{\C'}:Q_i^{\C'}\twoheadrightarrow S_i$ is an
epimorphism in $\C$, projectivity gives
$f_i:Q_i^\C\to Q_i^{\C'}$ such that
$q_i^{\C'}f_i=q_i^\C$. The subobject $\Im f_i$ belongs to
$\C'$ by Lemma~\ref{lem:same-simples} and maps onto $S_i$.
Since $q_i^{\C'}$ is essential, $\Im f_i=Q_i^{\C'}$. Therefore
\[
\len_\A(Q_i^{\C'})\le\len_\A(Q_i^\C)
\quad\text{for every }i,
\]
which proves the inequality.

If $\mu_\A(\C')=\mu_\A(\C)$, each epimorphism $f_i$ has
source and target of equal length and is therefore an isomorphism.
The two categories have a common projective generator $Q$. For
$X\in\C$, take a presentation $Q_1\to Q_0\to X\to0$ with
$Q_0,Q_1\in\add Q$. Since $\C'$ is full and its inclusion is
exact, the cokernel of $Q_1\to Q_0$ belongs to $\C'$. Thus
$X\in\C'$ and $\C=\C'$. The converse is immediate.
\end{proof}

\begin{theorem}\label{thm:categorical-finite-height} 
Put $d_\A=\sum_{i=1}^r\bigl(\len_\A(P_i)-1\bigr)$.
Let $\A=\C_0\supsetneq\C_1\supsetneq\cdots\supsetneq\C_s$
be a strictly descending chain of exact full abelian subcategories
containing all $S_i$. Then $s\le d_\A$.
\end{theorem}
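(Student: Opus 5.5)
The plan is to use the invariant $\mu_\A$ of Proposition~\ref{prop:strict-mu} as a strictly decreasing integer-valued function along the chain, and to bound its total range. For each $j$, the category $\C_j$ is an exact full abelian subcategory of $\A$ containing all $S_i$. So Proposition~\ref{prop:internal-covers} gives internal projective covers $Q_i^{\C_j}\twoheadrightarrow S_i$, and $\mu_\A(\C_j)=\sum_i\len_\A(Q_i^{\C_j})$ is defined.

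First, I will check that Proposition~\ref{prop:strict-mu} applies to each consecutive pair $\C_{j+1}\subsetneq\C_j$. Both are exact full abelian subcategories of $\A$ containing all $S_i$. The inclusion $\C_{j+1}\subseteq\C_j$ is full, since both are full in $\A$. It is also exact, by the remark preceding Lemma~\ref{lem:same-simples}. Proposition~\ref{prop:strict-mu} is stated for two subcategories of $\A$, each exact in $\A$, and that is exactly our situation. Since $\C_{j+1}\ne\C_j$, the equality case is excluded, so
\[
\mu_\A(\C_{j+1})\le\mu_\A(\C_j)-1 .
\]

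Second, I will bound the endpoints. For $\C_0=\A$, the projective cover of $S_i$ in $\A$ is $P_i$, so $\mu_\A(\A)=\sum_i\len_\A(P_i)$. For any $\C$, each $Q_i^\C$ maps onto $S_i\ne0$, so $\len_\A(Q_i^\C)\ge1$ and $\mu_\A(\C)\ge r$. Telescoping along the chain gives
\[
s\le\mu_\A(\C_0)-\mu_\A(\C_s)\le\sum_i\len_\A(P_i)-r=d_\A .
\]

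There is no real obstacle here. The only points needing care are that $Q_i^\A=P_i$ and that Proposition~\ref{prop:strict-mu} gives strict decrease exactly when the inclusion is proper. For the first, uniqueness of projective covers, or the maximal-length construction in the proof of Proposition~\ref{prop:internal-covers} with $Q=P_i$, gives $Q_i^\A\simeq P_i$.
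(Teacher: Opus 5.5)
Your proposal is correct and follows the paper's proof: both use Proposition~\ref{prop:strict-mu} to get a drop of at least one in $\mu_\A$ at each proper inclusion, and then bound the total drop by $\mu_\A(\A)-\mu_\A(\C_s)\le\sum_i\len_\A(P_i)-r=d_\A$. You additionally make explicit the point $Q_i^\A\simeq P_i$, which the paper uses without comment.
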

\begin{proof}
By Proposition~\ref{prop:strict-mu}, $\mu_\A$ decreases by at
least one at each inclusion. Since every $Q_i^{\C_s}$ is nonzero,
$\mu_\A(\C_s)\ge r$. It follows that
$s\le\mu_\A(\A)-\mu_\A(\C_s)
\le d_\A.$
\end{proof}
For a basic algebra $\Lambda$, one has
$d_\A=d_\Lambda=\len_\Lambda(\rad\Lambda)$ when $\A=\mod\Lambda$.
We apply Theorem~\ref{thm:categorical-finite-height} to the HRS iteration.
Proposition~\ref{prop:chz} gives exact fully faithful functors
\[
E_n=G^{(2n+1)}|_{\A^{(2n+2)}}:
\A^{(2n+2)}\longrightarrow\A^{(2n)}.
\]
The essential images of their composites in $\A$ form a chain
\begin{equation}\label{eq:even-chain}
\A=\C_0\supseteq\C_1\supseteq\C_2\supseteq\cdots.
\end{equation}
Since each composite identifies the induced torsion pair with
$(\T,\F)$, every $\C_n$ contains both classes. Each simple
object belongs to $\T$ or $\F$, as follows from its torsion
sequence. Thus every $\C_n$ contains all simple objects and has
a finite projective generator by
Proposition~\ref{cor:proj-generator}.

We next show that equality at one step forces equality at every
later step.

\begin{proposition}\label{prop:absorption}
If $\C_{n+1}=\C_n$, then $\C_j=\C_n$ for all $j\ge n$.
Every $G^{(m)}$ with $m\ge2n+1$ is then an equivalence.
\end{proposition}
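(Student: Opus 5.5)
The plan is to convert the hypothesis $\C_{n+1}=\C_n$ into a statement about the single step $E_n$, and then propagate it along the tower. First I identify $\A^{(2n)}$ with $\C_n$ through the composite $\A^{(2n)}\to\A$ of the exact fully faithful functors $E_{n-1},\dots,E_0$. By Proposition~\ref{prop:chz}, $\C_n$ carries the transported pair $(\T,\F)$, i.e.\ the induced torsion pair on $\A^{(2n)}$. Under this identification, $\C_{n+1}$ is the image of the composite of $E_n$ with that embedding. Since the embedding is fully faithful, the essential image of $E_n$ is all of $\A^{(2n)}$ exactly when $\C_{n+1}=\C_n$. Indeed, any object of $\A^{(2n)}$ maps to an object of $\C_n=\C_{n+1}$. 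That object is isomorphic to the image of some $Y\in\A^{(2n+2)}$, and full faithfulness of the embedding lifts this isomorphism to $E_n(Y)\simeq X$.

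Next I apply Proposition~\ref{prop:chz} to the forward tilt of $\A^{(2n)}$ at its induced pair, with $G=G^{(2n+1)}$ and $\A'=\A^{(2n+2)}$. Condition (iii) holds, so $G^{(2n+1)}$ is an equivalence. The last sentence of Proposition~\ref{prop:chz} then shows that every realization functor for the associated backward tilt, in particular $G^{(2n+2)}$, is an equivalence.

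To propagate, I note that $E_n$ is now an exact equivalence $\A^{(2n+2)}\simeq\A^{(2n)}$ that identifies the induced torsion pairs. The subsequent tilts are defined intrinsically from a heart and its torsion pair, up to equivalence. Consequently $\A^{(2n+3)}$ is the forward tilt of a pair equivalent to the one used to build $\A^{(2n+1)}$. The essential image of $E_{n+1}$ in $\A^{(2n+2)}$ corresponds under $E_n$ to the essential image of $E_n$ in $\A^{(2n)}$, using the remark after Proposition~\ref{prop:chz} that this image depends only on the torsion pair. That image is everything, so $E_{n+1}$ is essentially surjective and $\C_{n+2}=\C_{n+1}$. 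Induction gives $\C_j=\C_n$ for all $j\ge n$, and by the previous paragraph $G^{(2j+1)}$ and $G^{(2j+2)}$ are equivalences for all $j\ge n$. This covers every $m\ge2n+1$.

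The main obstacle is the transport step. I must justify that an exact equivalence of abelian categories respecting the torsion pairs induces an equivalence of the HRS hearts in the derived categories. It must also preserve the property ``essential image of the second-heart embedding is everything.'' I would argue this via the induced triangle equivalence $\Db(\A^{(2n+2)})\simeq\Db(\A^{(2n)})$. It carries the cohomology conditions defining the hearts to one another, and the essential image is invariant by \cite[Proposition~3.2]{CHZ}, so the choice of realization functor does not matter.
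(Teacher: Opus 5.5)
Your proposal is correct and follows the paper's proof. Essential surjectivity of $E_n$ gives that $G^{(2n+1)}$ and $G^{(2n+2)}$ are equivalences via Proposition~\ref{prop:chz}, and you propagate this by transporting along the derived equivalence induced by $E_n$, which matches $\A^{(2n+3)}\simeq\A^{(2n+1)}$. The one difference is that you pass to the chosen $G^{(2n+3)}$ via independence of the essential image \cite[Proposition~3.2]{CHZ}, whereas the paper uses \cite[Corollary~2.8]{CHZ} to conclude directly that $G^{(2n+3)}$ is an equivalence; by Proposition~\ref{prop:chz} the two routes are interchangeable.
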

\begin{proof}
Since $\C_{n+1}=\C_n$, the fully faithful functor $E_n$ is
an equivalence. Proposition~\ref{prop:chz} implies that
$G^{(2n+1)}$ and $G^{(2n+2)}$ are equivalences.

The functor $E_n$ identifies the torsion pairs on $\A^{(2n+2)}$
and $\A^{(2n)}$. Hence its derived equivalence restricts to an
equivalence of their forward HRS hearts,
$\A^{(2n+3)}\simeq\A^{(2n+1)}$. Transporting $G^{(2n+1)}$
along these equivalences gives an equivalence realization functor
for the forward tilt of $\A^{(2n+2)}$. By
\cite[Corollary 2.8]{CHZ}, the chosen realization $G^{(2n+3)}$
is also an equivalence. Proposition~\ref{prop:chz} then gives
$\C_{n+2}=\C_{n+1}$ and shows that $G^{(2n+4)}$ is an
equivalence. Induction completes the proof.
\end{proof}

\begin{theorem}\label{thm:categorical-extension}
For every torsion pair in $\A$, there is $0\le N\le d_\A$
such that $\C_n=\C_N$ for $n\ge N$.
Every $G^{(m)}$ with $m\ge2N+1$ is an equivalence.
In particular, the result applies to $\A=\mod\Lambda$ for any
finite-dimensional algebra $\Lambda$.
\end{theorem}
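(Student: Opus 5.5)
The argument combines the chain \eqref{eq:even-chain} with Theorem~\ref{thm:categorical-finite-height} and Proposition~\ref{prop:absorption}. First, each $\C_n$ must be an exact full abelian subcategory of $\A$ containing all $S_i$. The composite $E_0\circ E_1\circ\cdots\circ E_{n-1}\colon\A^{(2n)}\to\A$ is a composite of exact fully faithful functors between abelian categories, so it is exact and fully faithful. Its essential image $\C_n$ is therefore an abelian subcategory of $\A$, and the inclusion is exact, since kernels and cokernels computed in $\A$ of morphisms in the image are images of kernels and cokernels. The discussion before Proposition~\ref{prop:absorption} already shows that every $\C_n$ contains $\T$ and $\F$, hence all simple objects. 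The chain is descending because $\C_{n+1}$ is the image of $\A^{(2n+2)}$ under the composite through $E_n(\A^{(2n+2)})\subseteq\A^{(2n)}$.

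Next, let $N$ be the least index with $\C_{N+1}=\C_N$. I need to show that such an index exists and that $N\le d_\A$. If $\C_{n+1}\ne\C_n$ for all $n<s$, then $\C_0\supsetneq\cdots\supsetneq\C_s$ is a strictly descending chain. Theorem~\ref{thm:categorical-finite-height} then gives $s\le d_\A$. Hence strict inclusion must fail at some step $N\le d_\A$. If equality already holds at step $0$, take $N=0$.

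With this $N$, Proposition~\ref{prop:absorption} gives $\C_j=\C_N$ for all $j\ge N$, and shows that every $G^{(m)}$ with $m\ge 2N+1$ is an equivalence. For $\A=\mod\Lambda$, the hypotheses of Section~\ref{sec:iteration} hold: $\mod\Lambda$ is a length category with finitely many simples, each having a finite-length projective cover $P_i$. Moreover $d_\A=d_\Lambda$. This recovers Theorem~\ref{thm:intro-main}.

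The only delicate point is the first step: checking that the essential image of an iterated composite really is an exact full abelian subcategory in the sense of Section~\ref{sec:iteration}. This needs closure under isomorphism and the fact that exact fully faithful functors compose to exact fully faithful functors. It is routine, but it is the step where care is needed. I expect no real obstacle, since all the substantive work sits in Theorem~\ref{thm:categorical-finite-height} and Proposition~\ref{prop:absorption}.
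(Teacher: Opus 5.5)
Your proposal is correct and follows essentially the same route as the paper: you rule out $d_\A+1$ consecutive strict inclusions via Theorem~\ref{thm:categorical-finite-height} to obtain $N\le d_\A$ with $\C_{N+1}=\C_N$, then invoke Proposition~\ref{prop:absorption}. Your preliminary check that each $\C_n$ is an exact full abelian subcategory containing all simples, and that the chain descends, matches the paper's discussion just before Proposition~\ref{prop:absorption}.
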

\begin{proof}
If $\C_{n+1}\ne\C_n$ for all $0\le n\le d_\A$, the first
$d_\A+1$ inclusions would be strict.
This contradicts Theorem~\ref{thm:categorical-finite-height}.
Thus equality occurs at some $N\le d_\A$.
Proposition~\ref{prop:absorption} gives the conclusion.
\end{proof}
\section{The intrinsic HRS ideal filtration}\label{sec:refined-invariants}
Throughout this section, $\A=\mod\Lambda$.
For a class $\mathcal X$ of modules, put
$\Ann_\Lambda\mathcal X=\bigcap_{X\in\mathcal X}\Ann_\Lambda X$.
We identify the subcategories $\C_n$ with module categories over
quotients of $\Lambda$. For an ideal $I\subseteq \Lambda$ the category $\mod(\Lambda/I)$ is embedded into $\mod\Lambda$ by restriction of scalars. 

\begin{proposition}\label{prop:quotient-description}
Every exact full abelian subcategory $\C\subseteq\mod\Lambda$
containing all simple modules has the form
\[
\C=\{X:XI_\C=0\}=\mod(\Lambda/I_\C),\qquad
I_\C=\Ann_\Lambda\C\subseteq\rad\Lambda.
\]
\end{proposition}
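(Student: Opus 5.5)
The plan is to build everything from the internal projective generator of Proposition~\ref{cor:proj-generator}. Set $I=I_\C=\Ann_\Lambda\C$. First, $I\subseteq\rad\Lambda$: every simple module lies in $\C$, and $\rad\Lambda$ is the intersection of the annihilators of the simple modules. The inclusion $\C\subseteq\mod(\Lambda/I)$ holds by the definition of $I$. The content of the statement is the reverse inclusion $\mod(\Lambda/I)\subseteq\C$.

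For the reverse inclusion I would use Proposition~\ref{cor:proj-generator}. It gives a projective cover $Q_i=Q_i^\C$ of $S_i$ in $\C$, realized as a quotient $P_i\twoheadrightarrow Q_i$. Because $\Lambda$ is basic, $\Lambda_\Lambda=\bigoplus_i P_i$, so $Q:=\bigoplus_iQ_i$ is a cyclic quotient $\Lambda/K$ of $\Lambda_\Lambda$, where $K$ is a right ideal. The key claim is that $K$ equals $I$; then $Q\cong\Lambda/I$ and $Q\in\C$.

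The steps toward this claim are as follows.
\begin{enumerate}
\item $K\supseteq I$. This holds because $QI=0$.
\item $K\subseteq I$. Let $X\in\C$ and $x\in X$. The map $\Lambda\to X$, $\lambda\mapsto x\lambda$, restricts on each summand $P_i$ to a morphism $P_i\to X$. In the proof of Proposition~\ref{cor:proj-generator}, every morphism $P_i\to X$ with $X\in\C$ factors through $u:P_i\twoheadrightarrow Q_i$. Hence the map $\Lambda\to X$ factors through $\Lambda/K$, so $xK=0$. As $X$ and $x$ are arbitrary, $K\subseteq\Ann_\Lambda\C=I$.
\item Thus $\Lambda/I=Q\in\C$, and therefore every free $\Lambda/I$-module $(\Lambda/I)^{n}\in\add Q$ lies in $\C$.
\end{enumerate}

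Now let $M\in\mod(\Lambda/I)$. Take a free presentation $(\Lambda/I)^{a}\xrightarrow{f}(\Lambda/I)^{b}\to M\to0$ over $\Lambda/I$. The morphism $f$ is $\Lambda$-linear between objects of $\C$, so it is a morphism in $\C$ because $\C$ is full. Since the inclusion $\C\subseteq\mod\Lambda$ is exact, $\Coker f$ computed in $\mod\Lambda$ belongs to $\C$. Hence $M\in\C$. This gives $\C=\mod(\Lambda/I)=\{X:XI=0\}$. Alternatively, once $\Lambda/I\in\C$ one could invoke Lemma~\ref{lem:same-simples}: every module $M$ with $MI=0$ is a quotient of some $(\Lambda/I)^{n}$, and $\C$ is closed under finite direct sums and quotients.

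I expect the main obstacle to be step (ii). It needs the factorization property $\Hom_\C(Q_i,-)\simeq\Hom_\Lambda(P_i,-)|_\C$, which is stated inside the proof of Proposition~\ref{cor:proj-generator} and not in its statement, so it should be cited explicitly. It is also important to observe that $K$ is a two-sided ideal. This follows once $K=I$ is shown, since $I$ is two-sided; equivalently, $K=\bigcap_{X\in\C}\Ann X$ directly.
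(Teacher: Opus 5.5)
Your proof is correct, but it reaches the key point $\Lambda/I_\C\in\C$ by a different route from the paper.

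\textbf{The paper's route.} The paper does not use the internal projective covers here. It chooses finitely many $X_1,\dots,X_m\in\C$ whose annihilators intersect to $I_\C$, by a minimal-dimension argument. Then $X=\bigoplus_jX_j$ is faithful over $\Lambda/I_\C$, and a $k$-basis of $X$ gives an embedding $\Lambda/I_\C\hookrightarrow X^d$. Lemma~\ref{lem:same-simples} is then used twice: closure under subobjects gives $\Lambda/I_\C\in\C$, and closure under quotients gives all of $\mod(\Lambda/I_\C)$.

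\textbf{Your route.} You use the universal property of $Q_i^\C$: every map $P_i\to X$ with $X\in\C$ factors through $P_i\twoheadrightarrow Q_i^\C$. With it you identify the internal projective generator $Q^\C$ with $\Lambda/I_\C$.

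\textbf{Comparison.} The paper's argument is more elementary and self-contained, since it needs only Lemma~\ref{lem:same-simples}. Yours relies on a property proved inside the proof of Proposition~\ref{cor:proj-generator} rather than stated there. That property can also be recovered from the statement alone. The image of $(u,f)$ in $Q_i^\C\oplus X$ is a nonzero, hence local, quotient of $P_i$ lying in $\C$. It is therefore a quotient of $Q_i^\C$, and comparing lengths makes the projection to $Q_i^\C$ an isomorphism. In exchange, your route gives the sharper identification $Q_i^\C\cong e_i\Lambda/e_iI_\C$, so that $\mu_\A(\C)=\len_\Lambda(\Lambda/I_\C)$. This makes Proposition~\ref{prop:strict-mu} and the bound of Theorem~\ref{thm:categorical-finite-height} transparent in terms of a strictly increasing chain of ideals inside $\rad\Lambda$.

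\textbf{A small simplification.} Your step (i) is not needed for the proposition itself. The inclusion $K\subseteq I_\C$ already exhibits $\Lambda/I_\C$ as a quotient of $Q^\C\in\C$, and Lemma~\ref{lem:same-simples} finishes the argument.
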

\begin{proof}
Put $I=I_\C$.
Among the finite intersections of annihilators of objects of $\C$,
choose one of minimal dimension. Since intersecting it with
$\Ann_\Lambda Y$ for any $Y\in\C$ cannot reduce its dimension,
it equals $I$.
Choose $X_1,\ldots,X_m\in\C$ realizing this finite intersection,
and put $X=\bigoplus_jX_j$. Then
\[
\Ann_\Lambda X=\bigcap_{j=1}^m\Ann_\Lambda X_j=I.
\]
Since $\Ann_{\Lambda/I}X=\Ann_\Lambda X/I=0$,
$X$ is faithful over $\Lambda/I$.
For a $k$-basis $x_1,\ldots,x_d$ of $X$, consider
\[
\Lambda/I\longrightarrow X^d,\qquad
a+I\longmapsto(x_1a,\ldots,x_da).
\]
Since $Xa=0$ implies $a\in I$, the displayed right
$\Lambda$-module map is injective.

By Lemma~\ref{lem:same-simples}, we have $\Lambda/I\in\C$.
Every finite $\Lambda/I$-module is a quotient of $(\Lambda/I)^r$
for some $r$, so the same lemma gives $\mod(\Lambda/I)\subseteq\C$.
The reverse inclusion follows from the definition of $I$.
Since $\C$ contains every simple module, $I\subseteq\rad\Lambda$.
Finally, $\Ann_\Lambda(\Lambda/I)=I$ gives uniqueness.
\end{proof}

Apply Proposition~\ref{prop:quotient-description} to
the chain \ref{eq:even-chain} and set
\begin{equation}\label{eq:ideal-chain}
I_n=\Ann_\Lambda\C_n,\qquad \Lambda_n=\Lambda/I_n,\qquad
0=I_0\subseteq I_1\subseteq I_2\subseteq\cdots.
\end{equation}
We call this chain the \emph{HRS ideal filtration}.
Restriction of scalars identifies $\C_n$ with $\mod\Lambda_n$.
After this identification, the transported classes satisfy
$\T_n=\T$ and $\F_n=\F$ as full subcategories of $\mod\Lambda$:
each class is already contained in $\C_n$.
In particular, both classes are annihilated by every $I_n$.

\begin{lemma}\label{lem:square-zero-annihilator}
Set $J_\T=\Ann_\Lambda\T\cap\Ann_\Lambda\F$.
Then the following hold.
\begin{enumerate}
\item[(1)] $J_\T^2=0$, $I_n\subseteq J_\T$ for all $n$;
\item[(2)]
		$\Ann_{\Lambda_n}\T_n\cap\Ann_{\Lambda_n}\F_n=J_\T/I_n.$
\end{enumerate}
\end{lemma}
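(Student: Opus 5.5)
The plan is to use only the torsion sequences and the definitions of $I_n$ and $\Lambda_n$.

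\emph{Step one: $J_\T^2=0$.} Let $a,b\in J_\T$ and take the torsion sequence $0\to tX\to X\to fX\to0$ of a module $X$. For $x\in X$, the image of $xa$ in $fX$ is $\bar x a=0$, because $a\in\Ann_\Lambda\F$ and $fX\in\F$. Hence $xa\in tX$. Since $b\in\Ann_\Lambda\T$, we get $(xa)b=0$. Thus $X(ab)=0$ for every module $X$. Applying this to $X=\Lambda_\Lambda$ gives $ab=0$, so $J_\T^2=0$.

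\emph{Step two: $I_n\subseteq J_\T$.} We noted after \eqref{eq:ideal-chain} that $\T\cup\F\subseteq\C_n$. By Proposition~\ref{prop:quotient-description}, $I_n=\Ann_\Lambda\C_n$. Hence $I_n\subseteq\Ann_\Lambda\T\cap\Ann_\Lambda\F=J_\T$. This proves (1).

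\emph{Step three: part (2).} After the identification $\C_n=\mod\Lambda_n$ we have $\T_n=\T$ and $\F_n=\F$, and the $\Lambda_n$-action on a module $Y\in\C_n$ is given by $y(a+I_n)=ya$. Therefore, for $Y\in\C_n$,
\[
\Ann_{\Lambda_n}Y=\Ann_\Lambda Y/I_n,
\]
and $I_n\subseteq\Ann_\Lambda Y$ automatically. Intersecting over $Y\in\T\cup\F$, and using that intersections of ideals containing $I_n$ commute with passage to $\Lambda/I_n$, gives
\[
\Ann_{\Lambda_n}\T_n\cap\Ann_{\Lambda_n}\F_n=(\Ann_\Lambda\T\cap\Ann_\Lambda\F)/I_n=J_\T/I_n .
\]

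No step is a real obstacle. The only point needing care is in step one: we must use the torsion sequence of an arbitrary module, specifically the regular module, to pass from annihilating $\T$ and $\F$ to $ab=0$ in $\Lambda$. Torsion-free or torsion modules alone do not suffice for this.
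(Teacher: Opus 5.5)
Your proof is correct and takes essentially the paper's route. The paper applies the torsion sequence $0\to U\to\Lambda\to V\to0$ of $\Lambda_\Lambda$ directly to get $J_\T=\Lambda J_\T\subseteq U$ and then $J_\T^2\subseteq UJ_\T=0$, which is your elementwise argument specialized to $X=\Lambda_\Lambda$; the inclusion $I_n\subseteq J_\T$ and part (2) are handled exactly as you do.
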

\begin{proof}
(1) Take the torsion sequence
$0\to U\to\Lambda\to V\to0$ of the regular right module.
Since $V\in\F$, the equality $VJ_\T=0$ gives
$J_\T=\Lambda J_\T\subseteq U$.
Since $U\in\T$, we also have $UJ_\T=0$. Thus $J_\T^2\subseteq UJ_\T=0$.
Since $\T,\F\subseteq\C_n$, we have $I_n\subseteq J_\T$.

(2) For a $\Lambda_n$-module $X$, the coset $a+I_n$ annihilates $X$
if and only if $Xa=0$. Thus the two annihilators over $\Lambda_n$ are
$\Ann_\Lambda\T/I_n$ and $\Ann_\Lambda\F/I_n$.
Their intersection is $J_{\T}/I_n$.
\end{proof}

\subsection{An intrinsic construction}\label{sec:first-hrs-ideal}
We call $I_\Lambda(\T)=I_1$ the \emph{first HRS ideal} and write
$\C_\T=\C_1$.
Noohi's description of the second tilt gives the following
module-theoretic construction of $I_\Lambda(\T)$.

\begin{definition}\label{def:tau-admissible}
A module $X$ is \emph{$\T$-admissible} if there are submodules
$K_1\subseteq K_2\subseteq E$ and $L\subseteq E$ such that
\[
X\simeq K_2/K_1,\qquad K_2\cap L\in\F,\qquad
E/(K_1+L)\in\T.
\]
Write $\Adm_\T(\Lambda)$ for the full subcategory of these modules.
\end{definition}

\begin{proposition}\label{prop:noohi-admissible}
One has
$\Adm_\T(\Lambda)=\C_\T=\mod(\Lambda/I_\Lambda(\T))$.
In particular,
$I_\Lambda(\T)=\Ann_\Lambda\Adm_\T(\Lambda)$ is intrinsic
to the torsion pair.
\end{proposition}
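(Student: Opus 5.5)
We must show that $\Adm_\T(\Lambda)$ equals the essential image $\C_\T$ of $G|_{\A'}$ in $\A=\mod\Lambda$. The second equality $\C_\T=\mod(\Lambda/I_\Lambda(\T))$ is Proposition~\ref{prop:quotient-description} applied to $\C_1$. The final clause then follows from $\Ann_\Lambda\mod(\Lambda/I)=I$. So the work is the first equality, which we prove by two inclusions.

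For $\Adm_\T(\Lambda)\subseteq\C_\T$, we first show that the module $E/K_1$ lies in $\C_\T$. Put $M=E/K_1$ and let $\bar L$ be the image of $L$ in $M$. Then $M/\bar L=E/(K_1+L)\in\T$, and $\bar L\simeq L/(L\cap K_1)$. We would rather work with $K_2$. The cleanest route is to show directly that $K_2/K_1$ is an extension in $\C_\T$ of a torsion-type object by a torsion-free-type object. Consider $K_2/K_1\supseteq (K_1+K_2\cap L)/K_1\simeq (K_2\cap L)/(K_1\cap L)$. Here I expect the intended use of Noohi's model: objects of $\A'$ correspond, via $G$, to modules $Y$ admitting a two-step filtration with a specified extension class that lifts through $\B$.

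Concretely, recall that $\A'$ consists of objects $Y\in\Db(\B)$ with $H^0_\B(Y)=T\in\T$ and $H^1_\B(Y)=F[1]$, glued by a class in $\Ext^2_\B(T,F[1])$. Their images are the modules $X$ with $0\to F'\to X\to T'\to0$ whose class in $\Ext^1_\A(T',F')=\Hom_\B(T',F'[1])$ is compatible with the $\B$-structure. The admissibility data $(K_1\subseteq K_2\subseteq E,\,L)$ is precisely a "cospan presentation" witnessing this. We exhibit $X=K_2/K_1$ as a subquotient of $E/K_1$ sitting between $(K_2\cap L)$-part in $\F$ and a quotient mapping to $E/(K_1+L)\in\T$.

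The plan therefore is:
\begin{enumerate}
\item Show that the modules $X$ for which a presentation with $K_1=0$, $K_2=E$ exists (that is, $L\in\F$ and $E/L\in\T$) are exactly the extensions of $\T$ by $\F$, and that these lie in $\C_\T$ since $\T,\F\subseteq\C_\T$ and the connecting class comes from $\B$.
\item Use Lemma~\ref{lem:same-simples}: $\C_\T$ is closed under subobjects and quotients. Hence once $E/K_1$, or a suitable module containing $K_2/K_1$, is in $\C_\T$, so is $X$.
\item For the reverse inclusion, take $X\in\C_\T$, $X=G(Y)$, and use the triangle $\Sigma_\B^{-1}F[1]\to Y\to T$ in $\Db(\B)$. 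Realize it by a module $E$ with submodules $L$ and $K_1\subseteq K_2$ built from projective presentations in $\B$, following Noohi.
\end{enumerate}

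The main obstacle is step (iii) and the compatibility in (i). Not every extension of $T$ by $F$ in $\A$ is in $\C_\T$, since $\C_\T$ need not be extension closed. So one must use Noohi's explicit description of $\Db(\B)$-morphisms as roofs in $\A$ to see that the extra room given by $K_1$ and $E$ is exactly what encodes the lifting of the extension class to $\B$. I would first try to reproduce Noohi's model of the second tilt as a category of such subquotient data and check that $G$ sends a datum to $K_2/K_1$.
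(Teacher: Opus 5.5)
\paragraph{Assessment.}
Your reduction of the second equality and of the final clause to Proposition~\ref{prop:quotient-description} is correct and matches the paper. The first equality $\Adm_\T(\Lambda)=\C_\T$ is the entire content of the statement, however, and your proposal proves neither inclusion. Your route also has concrete defects.

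\begin{enumerate}
\item In step (ii), $E/K_1$ need not lie in $\C_\T$. The data $K_1=K_2=0$, $L=E$ are admissible for every module $E$. You never say what the ``suitable module containing $K_2/K_1$'' would be instead.
\item Your picture of $\A'$ has the directions reversed. The torsion class of $\A'$ is $\T$, so each $Y\in\A'$ sits in a triangle $T\to Y\to\Sigma_\B^{-1}(F[1])\to\Sigma_\B T$. Its gluing class lies in $\Ext^2_\B(F[1],T)$, not $\Ext^2_\B(T,F[1])$. Accordingly, $G(Y)$ has torsion sequence $0\to T\to G(Y)\to F\to0$, and the constraint is that its class lie in the image of the injection $\Ext^2_\B(F[1],T)\hookrightarrow\Ext^1_\A(F,T)$ (compare Remark~\ref{rem:ext2-hrs-ideal}).
\item The extensions $0\to F\to X\to T\to0$ in your step (i) carry no constraint at all, because $G$ induces $\Ext^1_{\A'}(T,\Sigma_\B^{-1}(F[1]))\simeq\Hom_\B(T,F[1])=\Ext^1_\A(T,F)$. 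So (i) is true, and the ``compatibility'' you worry about there does not exist.
\item The triangle $\Sigma_\B^{-1}F[1]\to Y\to T$ you propose for (iii) is the wrong one.
\item Steps (i) and (ii) reach only subquotients of modules having an $\F$-submodule with $\T$-quotient. You give no reduction of a general admissible datum, where only $K_2\cap L\in\F$ and $E/(K_1+L)\in\T$ are imposed, to that case. Step (iii) is not carried out beyond ``follow Noohi''.
\end{enumerate}

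\paragraph{The missing idea.}
You need to turn admissible data into a morphism in $\B$. Put $W_1=[K_1\oplus L\to E]$ and $W_2=[K_2\oplus L\to E]$, with sum maps, in degrees $-1,0$. Admissibility is exactly what puts both in $\B$: $H^{-1}(W_j)\simeq K_j\cap L\subseteq K_2\cap L\in\F$, and $H^0(W_j)=E/(K_j+L)$ is a quotient of $E/(K_1+L)\in\T$.

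The inclusion $K_1\subseteq K_2$ gives a chain map $g\colon W_1\to W_2$. A direct computation gives $\mathrm{cone}(g)\simeq(K_2/K_1)[1]$ in $\Db(\A)$. Hence $\ker_\B g=t(K_2/K_1)\in\T$ and $\Coker_\B g=f(K_2/K_1)[1]\in\F[1]$. So the complex $[W_1\xrightarrow{g}W_2]$ lies in the forward tilt of $\B$, which is $\Sigma_\B\A'$. Any realization functor sends its desuspension to $K_2/K_1$, which proves $\Adm_\T(\Lambda)\subseteq\C_\T$.

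The reverse inclusion needs every object of this second heart to be isomorphic to such a complex. That is the substance of Noohi's Section~8.1 and Proposition~8.1, which the paper cites. The paper combines it with the independence of the essential image from the choice of realization functor, noted right after Proposition~\ref{prop:chz}. Your proposal supplies neither ingredient.
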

\begin{proof}
By \cite[Section 8.1 and Proposition 8.1]{Noohi}, the second forward
HRS heart is described by diagrams
$[K_1\subseteq K_2\subseteq E\supseteq L]$ satisfying the conditions
of Definition~\ref{def:tau-admissible}.
In $\Db(\B)$, this heart has $\T$ in degree $-1$ and $\F[1]$
in degree $0$, so it is $\Sigma_\B\A'$.
After shifting by $\Sigma_\B^{-1}$, the restricted realization
sends such a diagram to $K_2/K_1$.
Its essential image is therefore $\Adm_\T(\Lambda)$.
By Proposition~\ref{prop:chz}, the image is independent of the
realization functor and equals $\C_\T$.
The remaining assertions follow from
Proposition~\ref{prop:quotient-description}.
\end{proof}

Since $\Ann_\Lambda(\Lambda/K)=K$ for every two-sided ideal $K$,
Proposition~\ref{prop:noohi-admissible} shows that $\Lambda/K$
is admissible precisely when $I_\Lambda(\T)\subseteq K$.
Thus $\Lambda/I_\Lambda(\T)$ is the largest admissible quotient
of the regular module: every admissible regular quotient is a
quotient of it.

\begin{exm}
	Let $\Gamma$ be a finite-dimensional algebra and let
	$(\T,\F)$ be a split torsion pair in $\mod\Gamma$. Every module $X\in\mod\Gamma$ decomposes
	as $X\simeq T\oplus F$ with $T\in\T$ and $F\in\F$.
	In Definition~\ref{def:tau-admissible}, take
	\[
	E=X,\quad K_1=0,\quad K_2=X,\quad L=F.
	\]
	Then
	$K_2/K_1\simeq X$, $K_2\cap L=F\in\F$, and
	$E/(K_1+L)\simeq T\in\T$.
	Thus every $\Gamma$-module is $\T$-admissible,
	and $\Adm_{\T}(\Gamma)=\mod\Gamma.$
	By Proposition~\ref{prop:noohi-admissible},
	\[
	\Adm_{\T}(\Gamma)
	=
	\mod\bigl(\Gamma/I_\Gamma(\T)\bigr)
	\]
	as the full subcategory of modules annihilated by
	$I_\Gamma(\T)$.  It follows that $I_\Gamma(\T)=0$.
\end{exm}

Applying Proposition~\ref{prop:noohi-admissible} to successive
quotients gives the following recursion.

\begin{theorem}\label{thm:hrs-ideal-recursion}
The HRS ideal filtration is determined recursively by
\begin{equation}\label{eq:hrs-ideal-recursion}
I_{n+1}/I_n=I_{\Lambda_n}(\T_n),\qquad
\Lambda_{n+1}=\Lambda_n/I_{\Lambda_n}(\T_n).
\end{equation}
All $I_n$ and $\C_n$ are independent of the chosen realization
functors.
\end{theorem}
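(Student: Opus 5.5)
The plan is to argue by induction on $n$, comparing the two-step tower starting at $\A^{(2n)}$ with the first two steps of the alternating tower built from $(\T_n,\F_n)$ in $\mod\Lambda_n$. The key observation is that $\A^{(2n)}$, equipped with its induced torsion pair, is identified by the composite exact fully faithful functor $E_0\circ\cdots\circ E_{n-1}$ with $\C_n=\mod\Lambda_n$, carrying the pair to $(\T,\F)=(\T_n,\F_n)$. This follows from Proposition~\ref{prop:chz}, which says each $E_j$ identifies the induced torsion pair with the previous one, together with Proposition~\ref{prop:quotient-description}. This composite is an equivalence of abelian categories $\A^{(2n)}\simeq\mod\Lambda_n$ respecting torsion pairs.

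Next, transport the forward and backward tilts along this equivalence. An exact equivalence $\A^{(2n)}\simeq\mod\Lambda_n$ induces a triangle equivalence of bounded derived categories. This equivalence carries the forward heart $\A^{(2n+1)}$ onto the forward HRS heart $\B_n$ of $(\T_n,\F_n)$ in $\Db(\Lambda_n)$, and then the backward heart $\A^{(2n+2)}$ onto the backward heart $\A'_n$ in $\Db(\B_n)$. Under these identifications, $G^{(2n+1)}$ becomes a realization functor for $\B_n\subseteq\Db(\Lambda_n)$. By the remark after Proposition~\ref{prop:chz} (\cite[Proposition~3.2]{CHZ}), the essential image of the restriction to the backward heart does not depend on the realization functor chosen. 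Hence the essential image of $E_n$, viewed in $\mod\Lambda_n$, is the first-step image $\C_{\T_n}$ for the pair $(\T_n,\F_n)$ over $\Lambda_n$. By Proposition~\ref{prop:noohi-admissible} applied over $\Lambda_n$, this equals $\mod(\Lambda_n/I_{\Lambda_n}(\T_n))$.

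Composing with the embedding $\mod\Lambda_n\subseteq\mod\Lambda$ by restriction of scalars gives $\C_{n+1}=\mod(\Lambda_n/I_{\Lambda_n}(\T_n))$. A $\Lambda_n$-module annihilated by $I_{\Lambda_n}(\T_n)$ is the same as a $\Lambda$-module annihilated by the preimage $I'$ of that ideal in $\Lambda$. So $\C_{n+1}=\mod(\Lambda/I')$, and uniqueness in Proposition~\ref{prop:quotient-description} forces $I_{n+1}=I'$. This is exactly $I_{n+1}/I_n=I_{\Lambda_n}(\T_n)$ and $\Lambda_{n+1}=\Lambda_n/I_{\Lambda_n}(\T_n)$.

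Independence of the realization functors then follows by induction. $I_0=0$, and each step depends only on $\Lambda_n$ and $(\T_n,\F_n)=(\T,\F)$, which are intrinsic by Proposition~\ref{prop:noohi-admissible}. The $\C_n=\mod\Lambda_n$ are therefore intrinsic as well.

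The main obstacle is the transport step. One must check that the derived equivalence really carries the backward heart in $\Db(\A^{(2n+1)})$ to the backward heart in $\Db(\B_n)$, and that the transported $G^{(2n+1)}$ is a realization functor, i.e.\ $t$-exact and extending the inclusion. I would handle this by noting that the hearts are defined purely by cohomology conditions relative to the torsion classes, which are preserved by the induced equivalences. The only subtlety is that the intermediate identifications of the torsion pair involve the shift $\Sigma_\B^{-1}(\F[1])\mapsto\F$, which is compatible by construction.
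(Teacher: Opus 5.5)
Your proposal is correct and follows the same route as the paper: identify $\C_{n+1}\subseteq\C_n=\mod\Lambda_n$ with the second-heart image for the transported pair $(\T_n,\F_n)$, apply Proposition~\ref{prop:noohi-admissible} over $\Lambda_n$, and compare ideals. Your preimage/uniqueness argument is equivalent to the paper's computation $\Ann_{\Lambda_n}\C_{n+1}=I_{n+1}/I_n$, and independence then follows by induction from $I_0=0$. The paper compresses your transport step, through the induced derived equivalences and the realization-independence of \cite[Proposition~3.2]{CHZ}, into the single assertion that $\C_{n+1}$ is the second-heart image for $(\T_n,\F_n)$, so your version makes explicit what the paper leaves implicit.
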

\begin{proof}
Since $\C_{n+1}\subseteq\C_n=\mod\Lambda_n$ is the second-heart
image for $(\T_n,\F_n)$, Proposition~\ref{prop:noohi-admissible} gives
\[
I_{\Lambda_n}(\T_n)=\Ann_{\Lambda_n}\C_{n+1}
=\Ann_{\Lambda_n}\mod(\Lambda/I_{n+1})=I_{n+1}/I_n.
\]
The second equality in \eqref{eq:hrs-ideal-recursion} follows by
taking quotients.

Equivalently, $I_{n+1}$ is the inverse image of
$I_{\Lambda_n}(\T_n)$ under $\Lambda\twoheadrightarrow\Lambda_n$.
Since admissibility depends only on the torsion pair
(Definition~\ref{def:tau-admissible}) and the transported classes
are fixed by restriction of scalars, induction
from $I_0=0$ proves independence of all realization choices.
\end{proof}

\begin{remark}\label{rem:ext2-hrs-ideal}
For $F\in\F$ and $T\in\T$, put $F'=\Sigma_\B^{-1}(F[1])\in\A'$.
The identification
of first Yoneda extensions in a heart with degree-one morphisms gives
$\operatorname{YExt}^1_{\A'}(F',T)
\simeq\Hom_{\Db(\B)}(F[1],\Sigma_\B^2T)
\simeq\operatorname{YExt}^2_\B(F[1],T)$.
Combining this with the exact equivalence
$\A'\simeq\mod(\Lambda/I_1)$ and the fully faithful embedding of
\cite[Proposition~3.1]{CHZ} gives
\[
\operatorname{YExt}^2_\B(F[1],T)
\simeq\Ext^1_{\Lambda/I_1}(F,T)
\hookrightarrow\Ext^1_\Lambda(F,T).
\]
An extension $0\to T\to X\to F\to0$ lies in this image
exactly when $XI_1=0$.
Indeed, all three terms are then $\Lambda/I_1$-modules and both
maps are $\Lambda/I_1$-linear. Since any $\Lambda$-linear equivalence between
two such extensions is also $\Lambda/I_1$-linear, restriction of
scalars is injective on extension classes.
\end{remark}

\subsection{Height and the stopping criterion}\label{subsec:hrs-height-bounds}
The number of strict inclusions in the HRS ideal filtration bounds
the stage from which the realization functors are equivalences. We put
$\Lambda^{\mathrm e}=\Lambda^{\mathrm{op}}\otimes_k\Lambda.$
Two-sided ideals of $\Lambda$ are finite-length $\Lambda^{\mathrm e}$-modules.

\begin{definition}\label{def:three-invariants}
The \emph{HRS height} is
$h_\Lambda(\T)=\min\{n\ge0:I_{n+1}=I_n\}$.
For $h=h_\Lambda(\T)$, write $I_\infty=I_h$ for the stable
ideal.
\end{definition}

\begin{theorem}\label{cor:first-hrs-ideal-stopping}\label{thm:refined-height}
The following conditions are equivalent for an integer $n\geq 0$:
\begin{enumerate}
	\item[(1)]$I_{n+1}=I_n$;
		\item[(2)]$I_{\Lambda_n}(\T_n)=0$;
			\item[(3)]$G^{(2n+1)}$ is an equivalence.
\end{enumerate}
If $I_{n+1}=I_n$, then $I_n=I_m$ for any $m>n$. 
\end{theorem}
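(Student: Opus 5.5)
The plan is to prove the equivalences (1)$\Leftrightarrow$(2) and (1)$\Leftrightarrow$(3) separately, and then to read off the absorption statement from Proposition~\ref{prop:absorption}.

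For (1)$\Leftrightarrow$(2) we use Theorem~\ref{thm:hrs-ideal-recursion}, which gives $I_{n+1}/I_n=I_{\Lambda_n}(\T_n)$ as ideals of $\Lambda_n=\Lambda/I_n$. Since $I_n\subseteq I_{n+1}$, equality $I_{n+1}=I_n$ holds exactly when this quotient vanishes.

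For (1)$\Leftrightarrow$(3), first translate between ideals and subcategories. By Proposition~\ref{prop:quotient-description} and the definition \eqref{eq:ideal-chain}, $\C_n=\mod\Lambda_n$, and $I_n=\Ann_\Lambda\C_n$ is recovered from $\C_n$. Hence $I_{n+1}=I_n$ if and only if $\C_{n+1}=\C_n$. Next we interpret $\C_{n+1}\subseteq\C_n$ through the functor $E_n=G^{(2n+1)}|_{\A^{(2n+2)}}\colon\A^{(2n+2)}\to\A^{(2n)}$. The composite of the earlier functors $E_{n-1}\circ\cdots\circ E_0$ is exact and fully faithful, and it identifies $\A^{(2n)}$ with $\C_n$. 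Under this identification, $\C_{n+1}$ is the essential image of $E_n$ composed with it. Because that composite is fully faithful, with essential image $\C_n$, the equality $\C_{n+1}=\C_n$ holds if and only if the essential image of $E_n$ is all of $\A^{(2n)}$ (closure under isomorphism in $\A^{(2n)}$ is inherited). Proposition~\ref{prop:chz}, applied to the forward tilt of $\A^{(2n)}$ with its torsion pair, says this happens exactly when $G^{(2n+1)}$ is an equivalence. This gives (1)$\Leftrightarrow$(3).

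The last assertion follows from Proposition~\ref{prop:absorption}. If $I_{n+1}=I_n$, then $\C_{n+1}=\C_n$, so $\C_m=\C_n$ for all $m\ge n$, and taking annihilators gives $I_m=I_n$.

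The only point needing care is the bookkeeping step identifying $\C_{n+1}$ with the image of $E_n$ inside $\C_n$. One must check that essential surjectivity of $E_n$ onto $\A^{(2n)}$ is equivalent to equality of essential images in $\A$. This uses only that the composite embedding of $\A^{(2n)}$ into $\A$ is fully faithful, so an object of $\C_n$ lies in $\C_{n+1}$ exactly when its preimage in $\A^{(2n)}$ lies in the essential image of $E_n$.
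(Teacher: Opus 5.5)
Your proposal is correct and follows essentially the same route as the paper: you get (1)$\Leftrightarrow$(2) from the recursion $I_{n+1}/I_n=I_{\Lambda_n}(\T_n)$, (1)$\Leftrightarrow$(3) from $I_{n+1}=I_n\Leftrightarrow\C_{n+1}=\C_n$ together with Proposition~\ref{prop:chz}, and the stabilization from Proposition~\ref{prop:absorption}. Your extra bookkeeping, that full faithfulness of the composite embedding $\A^{(2n)}\to\A$ turns $\C_{n+1}=\C_n$ into essential surjectivity of $E_n$, is exactly the step the paper leaves implicit when it cites Proposition~\ref{prop:chz}.
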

\begin{proof}
By Theorem~\ref{thm:hrs-ideal-recursion} and
Proposition~\ref{prop:quotient-description}, we have that
\[I_{\Lambda_n}(\T_n)=0
\quad\Longleftrightarrow\quad I_{n+1}=I_n
\quad\Longleftrightarrow\quad\C_{n+1}=\C_n.
\]
Thus $(1)\Longleftrightarrow (2)$.

By Proposition~\ref{prop:chz}, $\C_{n+1}=\C_n$ if and only if
$G^{(2n+1)}$ is an equivalence. Thus $(1)\Longleftrightarrow (3)$.
By Proposition~\ref{prop:absorption}, if $I_{n+1}=I_n$, then $I_n=I_m$ for any $m>n$.
\end{proof}

\begin{corollary}The HRS height of a torsion pair $(\T,\F)$ in $\mod\Lambda$ satisfies
	\[
	h_\Lambda(\T)\le
	\min\{d_\Lambda,\len_{\Lambda^{\mathrm e}}(J_\T)\}.
	\]
	Moreover,
	$\len_{\Lambda^{\mathrm e}}(J_\T)\le\len_\Lambda(J_\T)
	\le d_\Lambda$.
\end{corollary}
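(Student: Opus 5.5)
The proof combines the two chains already available: the ideal chain \eqref{eq:ideal-chain} inside $J_\T$, and the subcategory chain \eqref{eq:even-chain}.

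\emph{The bound by $d_\Lambda$.} Let $h=h_\Lambda(\T)$. By Theorem~\ref{thm:refined-height}, the inclusions $I_0\subsetneq I_1\subsetneq\cdots\subsetneq I_h$ are all strict; if some $I_{n+1}=I_n$ with $n<h$, the filtration would stabilize from $n$ on, contradicting minimality of $h$. By Proposition~\ref{prop:quotient-description} the ideals and the categories determine each other, so $\C_0\supsetneq\cdots\supsetneq\C_h$ is a strictly descending chain of exact full abelian subcategories containing all simples. Theorem~\ref{thm:categorical-finite-height} then gives $h\le d_\A=d_\Lambda$.

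\emph{The bound by $\len_{\Lambda^{\mathrm e}}(J_\T)$.} Each $I_n$ is a two-sided ideal, hence a $\Lambda^{\mathrm e}$-submodule of $J_\T$ by Lemma~\ref{lem:square-zero-annihilator}(1). A strict chain of $h+1$ submodules of the finite-length module $J_\T$ starting at $0$ forces $h\le\len_{\Lambda^{\mathrm e}}(J_\T)$.

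\emph{The comparison of lengths.} For the first inequality, refine a $\Lambda^{\mathrm e}$-composition series of $J_\T$ to a chain of right $\Lambda$-submodules. Every two-sided subideal is a right submodule, and each nonzero layer has right length at least one, so $\len_{\Lambda^{\mathrm e}}(J_\T)\le\len_\Lambda(J_\T)$. For the second inequality, Proposition~\ref{prop:quotient-description} gives $I_n\subseteq\rad\Lambda$, but we need $J_\T\subseteq\rad\Lambda$. This follows because $J_\T$ annihilates every simple module: each simple lies in $\T$ or $\F$. Hence $J_\T\subseteq\Ann(\mod\Lambda/\rad\Lambda)=\rad\Lambda$. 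Therefore $\len_\Lambda(J_\T)\le\len_\Lambda(\rad\Lambda)=d_\Lambda$, using the identity $d_\Lambda=\len_\Lambda(\rad\Lambda)$ for basic $\Lambda$ noted after Theorem~\ref{thm:categorical-finite-height}.

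The only point needing care is this inclusion $J_\T\subseteq\rad\Lambda$; the rest consists of routine length counts.
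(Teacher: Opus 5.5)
Your proof is correct and follows the paper's argument: the $d_\Lambda$-bound comes from the finite-height result applied to the even chain, the $\len_{\Lambda^{\mathrm e}}$-bound comes from the strict bimodule chain $0=I_0\subsetneq\cdots\subsetneq I_h\subseteq J_\T$, and the length comparison uses $J_\T\subseteq\rad\Lambda$ together with $\len_\Lambda(\rad\Lambda)=d_\Lambda$. The only difference is cosmetic: you obtain $J_\T\subseteq\rad\Lambda$ because every simple module lies in $\T$ or $\F$, whereas the paper deduces it from the nilpotency $J_\T^2=0$; both justifications are valid.
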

\begin{proof}

Theorem~\ref{thm:categorical-extension} gives
$h_\Lambda(\T)\le d_\Lambda$.
Before stabilization, we have a chain of bimodules
\[
0=I_0\subsetneq I_1\subsetneq\cdots\subsetneq I_h\subseteq J_\T
\]
whose first $h$ inclusions are strict. Hence
$h\le\len_{\Lambda^{\mathrm e}}(J_\T)$.
Since $J_\T^2=0$, we have $J_\T\subseteq\rad\Lambda$.
Every strict bimodule chain is also a strict right-module chain, so
$\len_{\Lambda^{\mathrm e}}(J_\T)\le\len_\Lambda(J_\T)
\le\len_\Lambda(\rad\Lambda)$.
Finally, the regular module decomposes as
$\Lambda_\Lambda\simeq\bigoplus_iP_i$.
Its radical therefore has length
$\sum_i(\len_\Lambda(P_i)-1)=d_\Lambda$.
\end{proof}
\begin{remark}\label{prop:hrs-filtration-tail}
Restarting the construction at $(\Lambda_s,\T_s)$ gives ideals
$I_{s+n}/I_s$.
This follows by induction from Theorem \ref{thm:hrs-ideal-recursion}.
Since the original filtration is strict before its height and
constant thereafter, the new height is
\[
h_{\Lambda_s}(\T_s)=\max\{h_\Lambda(\T)-s,0\}.
\]
In particular, the stable category is
$\mod(\Lambda/I_\infty)$ and its first HRS ideal is zero.
Its common annihilator is $J_\T/I_\infty$ by
Lemma \ref{lem:square-zero-annihilator}.
Note that the equality $I_{\Lambda/I_\infty}(\T_h)=0$ need not imply
$J_\T/I_\infty=0$.
\end{remark}

We record the behavior under products and for hereditary algebras.

\begin{proposition}\label{prop:hrs-height-product}
Let $\Lambda=\Lambda^{(a)}\times\Lambda^{(b)}$.
Every torsion pair decomposes uniquely into torsion pairs
$(\T^{(a)},\F^{(a)})$ and $(\T^{(b)},\F^{(b)})$ on the two factors.
If their HRS ideal filtrations are $(I_n^{(a)})$ and $(I_n^{(b)})$, then
\[
I_n=I_n^{(a)}\times I_n^{(b)},\qquad
h_\Lambda(\T)=
\max\{h_{\Lambda^{(a)}}(\T^{(a)}),
       h_{\Lambda^{(b)}}(\T^{(b)})\}.
\]
\end{proposition}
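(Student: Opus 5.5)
We will use central idempotents $e_a,e_b$ with $e_a+e_b=1$, $\mod\Lambda\simeq\mod\Lambda^{(a)}\times\mod\Lambda^{(b)}$ via $X\mapsto(Xe_a,Xe_b)$, and every two-sided ideal is $K=K^{(a)}\times K^{(b)}$ with $K^{(a)}=Ke_a$.

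First, the decomposition of the torsion pair. Since $\T$ is closed under quotients and $Xe_a$ is a direct summand (hence quotient) of $X$, every $T\in\T$ has $Te_a\in\T$; similarly $\F$ is closed under submodules, so $Fe_a\in\F$. Put $\T^{(a)}=\T\cap\mod\Lambda^{(a)}$ and define $\F^{(a)},\T^{(b)},\F^{(b)}$ likewise. The torsion sequence of $X$ is compatible with the factors, because $t$ is a functor and it commutes with multiplication by the central idempotents. Hence $(\T^{(a)},\F^{(a)})$ is a torsion pair in $\mod\Lambda^{(a)}$, and $\T=\T^{(a)}\times\T^{(b)}$, $\F=\F^{(a)}\times\F^{(b)}$. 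Uniqueness holds because any such decomposition must recover $\T\cap\mod\Lambda^{(a)}$.

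Second, the first HRS ideal. We prove $I_\Lambda(\T)=I_{\Lambda^{(a)}}(\T^{(a)})\times I_{\Lambda^{(b)}}(\T^{(b)})$ using the intrinsic description of Proposition~\ref{prop:noohi-admissible}, namely $\Adm_\T(\Lambda)=\mod(\Lambda/I_\Lambda(\T))$. We claim $\Adm_\T(\Lambda)=\Adm_{\T^{(a)}}(\Lambda^{(a)})\times\Adm_{\T^{(b)}}(\Lambda^{(b)})$.

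For $\subseteq$, take data $K_1\subseteq K_2\subseteq E\supseteq L$ for $X$ and multiply everything by $e_a$. The operations $\cap$, $+$ and quotients commute with $(-)e_a$, and the membership conditions descend to the factor by the first step. So $Xe_a$ is $\T^{(a)}$-admissible.

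For $\supseteq$, take direct sums of the data on the two factors; the conditions hold because $\T$ and $\F$ are closed under direct sums.

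Taking annihilators then gives the product formula, since $\Ann_\Lambda(\C^{(a)}\times\C^{(b)})=\Ann\C^{(a)}\times\Ann\C^{(b)}$ when each factor category contains $0$.

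Third, induction via Theorem~\ref{thm:hrs-ideal-recursion}. Suppose $I_n=I_n^{(a)}\times I_n^{(b)}$. Then $\Lambda_n=\Lambda_n^{(a)}\times\Lambda_n^{(b)}$. The transported pair $(\T_n,\F_n)$ consists of the same classes, and it decomposes into the transported pairs of the factors. Applying the second step over $\Lambda_n$ gives
\[
I_{n+1}/I_n=I_{\Lambda_n^{(a)}}(\T^{(a)}_n)\times I_{\Lambda_n^{(b)}}(\T^{(b)}_n).
\]
Taking inverse images yields $I_{n+1}=I_{n+1}^{(a)}\times I_{n+1}^{(b)}$.

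Finally, $I_{n+1}=I_n$ holds if and only if it holds on both factors. Each factor filtration is strict before its height and constant afterwards, by Theorem~\ref{thm:refined-height}. So the least $n$ at which both stabilize is the maximum of the two heights.

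The only delicate point is the second step: checking that all four admissibility conditions are compatible with $(-)e_a$. This is routine once centrality of $e_a$ is used, since $(-)e_a$ is then an exact functor on submodule lattices.
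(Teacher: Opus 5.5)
Your proposal is correct and follows essentially the same route as the paper. Both arguments decompose via the central idempotents and show that $\Adm_\T(\Lambda)$ splits componentwise using the admissibility description of Proposition~\ref{prop:noohi-admissible}. Both then take annihilators to get the formula for $I_1$, induct with the recursion \eqref{eq:hrs-ideal-recursion}, and read off the height from the stabilization of the two factor filtrations. You add a few details the paper leaves implicit, such as $(K_2\cap L)e_a=K_2e_a\cap Le_a$ by centrality and the use of $0$ in each factor class when computing annihilators.
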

\begin{proof}
The central idempotents of the product decompose every module
into its two components. Since torsion and torsion-free classes
are closed under finite direct sums and summands, they decompose
uniquely as asserted.
Since submodules, intersections, sums, and quotients are computed
componentwise, Definition~\ref{def:tau-admissible} gives
\[
\Adm_\T(\Lambda)=
\Adm_{\T^{(a)}}(\Lambda^{(a)})\times
\Adm_{\T^{(b)}}(\Lambda^{(b)}).
\]
Taking annihilators proves the formula for $I_1$.
Induction using \eqref{eq:hrs-ideal-recursion} gives the formula
for $I_n$. The product filtration is constant precisely when both
factor filtrations are constant, which gives the height formula.
\end{proof}

\begin{proposition}\label{prop:hereditary-arbitrary-pair}
If $\Lambda$ is hereditary, then every torsion pair $(\T,\F)$ in
$\mod\Lambda$ satisfies $h_\Lambda(\T)\le1$.
\end{proposition}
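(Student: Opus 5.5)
The plan is to prove the stronger statement that the first HRS ideal already equals the common annihilator, $I_1=J_\T$. Granting this, Lemma~\ref{lem:square-zero-annihilator}(2) shows that the transported pair $(\T_1,\F_1)$ on $\Lambda_1=\Lambda/I_1$ has common annihilator $J_\T/I_1=0$. Since $I_2\subseteq J_\T$ by the same lemma, we get $I_2/I_1=I_{\Lambda_1}(\T_1)\subseteq J_\T/I_1=0$, so $I_2=I_1$ and $h_\Lambda(\T)\le1$ by Theorem~\ref{thm:refined-height}. The inclusion $I_1\subseteq J_\T$ is known, so everything reduces to showing that every $\Lambda/J_\T$-module lies in $\C_1=\mod(\Lambda/I_1)$.

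For the reverse inclusion I would use the extension description of Remark~\ref{rem:ext2-hrs-ideal}. Every $X\in\mod\Lambda$ sits in its torsion sequence $0\to tX\to X\to fX\to0$. This is not an extension of the form $0\to T\to X\to F\to0$ treated in the remark, since there the torsion module is the submodule and the torsion-free module the quotient, which is the right shape. The class of $X$ lies in the image of $\operatorname{YExt}^2_\B(fX[1],tX)\hookrightarrow\Ext^1_\Lambda(fX,tX)$ exactly when $XI_1=0$. So it suffices to show that this map is surjective on classes of $\Lambda/J_\T$-modules. Concretely, I would compare Yoneda $\operatorname{YExt}^2_\B(F[1],T)$ with $\Hom_{\Db(\Lambda)}(F[1],T[2])=\Ext^1_\Lambda(F,T)$. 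Surjectivity holds when every morphism $F[1]\to T[2]$ is effaceable in $\B$, that is, when it factors as a composite of two degree-one morphisms through an object of $\B$.

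To effect the factorization I would use heredity. Since $\Ext^2_\Lambda=0$, every submodule of a projective module is projective. Given an extension $0\to T\to X\to F\to0$ with $XJ_\T=0$, I would choose the projective cover $P\twoheadrightarrow F$ and split $P$ by its own torsion sequence $0\to tP\to P\to fP\to0$. Both $tP$ and the kernel of $P\to F$ are then projective. From this I want to produce an object $M\in\B$ with $H^{-1}(M)\in\F$ and $H^0(M)\in\T$ through which the class factors, as $F[1]\to M$ followed by $M\to T[2]$. The condition $XJ_\T=0$ should be what makes the middle object's cohomologies land in $\F$ and $\T$: $J_\T$ lies in the torsion part of $\Lambda$ and is killed by both classes, as in the proof of Lemma~\ref{lem:square-zero-annihilator}. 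An alternative route avoids the computation by exhibiting each $\Lambda/J_\T$-module directly as $\T$-admissible in the sense of Definition~\ref{def:tau-admissible}. There one would take $E$ to be a $\Lambda$-projective cover, $K_2=E$, and $L$ built from the torsion-free part, using that submodules of projectives are projective.

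The main obstacle is this effaceability step. Stanley--van Roosmalen show that realization functors for hereditary hearts need not be equivalences, so the map cannot be surjective on all of $\Ext^1_\Lambda(F,T)$. The argument must genuinely use $XJ_\T=0$. If a direct factorization fails, the fallback is to apply Theorem~\ref{thm:categorical-extension} together with the criterion of \cite{HanHeSerre}, which characterizes the equivalence of the realization functor for a hereditary algebra, to the pair on $\Lambda_1$, and to check that condition using $J_\T/I_1$.
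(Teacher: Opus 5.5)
Your reduction runs in the wrong direction. Because $I_1\subseteq J_\T$, every module killed by $J_\T$ is already killed by $I_1$. So $\mod(\Lambda/J_\T)\subseteq\mod(\Lambda/I_1)=\C_1$ holds automatically. In fact this inclusion is \emph{equivalent} to $I_1\subseteq J_\T$, since $\Ann_\Lambda(\Lambda/J_\T)=J_\T$. The statement you reduce everything to is therefore the inclusion you already have, and the effaceability ``obstacle'' is vacuous: by Remark~\ref{rem:ext2-hrs-ideal}, $XJ_\T=0$ forces $XI_1=0$, so the torsion-sequence class of such an $X$ lies in the image of $\operatorname{YExt}^2_\B(fX[1],tX)$ for free. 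The same applies to your admissibility alternative, since $\Adm_\T(\Lambda)=\C_1\supseteq\mod(\Lambda/J_\T)$ is already known. What $I_1=J_\T$ actually needs is the reverse inclusion $J_\T\subseteq I_1$, i.e.\ $\C_1\subseteq\mod(\Lambda/J_\T)$: every module whose torsion-sequence class \emph{does} lie in the image must be annihilated by $J_\T$. Constructions that build objects of $\C_1$ out of $\Lambda/J_\T$-modules can never give this. As written, the proposal proves nothing beyond what is known, and $h_\Lambda(\T)\le1$ does not follow.

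The missing ingredient is the one the paper's proof rests on. For $T\in\T$ and $F\in\F$ one has $\Ext^1_\B(T,F[1])\simeq\Hom_{\Db(\Lambda)}(T,F[2])=\Ext^2_\Lambda(T,F)=0$, so the torsion pair $(\F[1],\T)$ in $\B$ splits. The paper then argues categorically. A split pair makes the backward realization $G^{(2)}$ an equivalence \cite[Example~4.2(2)]{CHZ}, hence $G^{(3)}$ is one by \cite[Corollary~2.8]{CHZ}, and Theorem~\ref{thm:refined-height} gives $I_2=I_1$.

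Your stronger claim can also be completed with this input. Every $M\in\B$ is then isomorphic to $F'[1]\oplus T'$, and every element of $\operatorname{YExt}^2_\B(F[1],T)$ is a Yoneda product of two degree-one classes through a single such $M$. So its image in $\Ext^1_\Lambda(F,T)$ consists of classes $g_*\eta+h^*\zeta$ with $\eta\in\Ext^1_\Lambda(F,F')$, $g\colon F'\to T$, $h\colon F\to T'$ and $\zeta\in\Ext^1_\Lambda(T',T)$. The middle term of $g_*\eta$ is a quotient of $Y\oplus T$ with $Y\in\F$. That of $h^*\zeta$ is a submodule of $Z\oplus F$ with $Z\in\T$. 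A Baer sum is a subquotient of the direct sum of the two middle terms. All of these are killed by $J_\T$, so $\C_1\subseteq\mod(\Lambda/J_\T)$, which gives $I_1=J_\T$. After that, your final step $I_2\subseteq J_\T=I_1$ is correct. Compared with the paper, this route yields the extra information $I_\infty=J_\T$ for hereditary $\Lambda$, at the cost of an explicit extension computation.
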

\begin{proof}
Let $\B$ be the first HRS heart.
Since $\Lambda$ is hereditary, the usual identification of
extensions in a heart gives, for $T\in\T$ and $F\in\F$,
\[
 \Ext^1_\B(T,F[1])
 \simeq\Hom_{\Db(\Lambda)}(T,F[2])
 =\Ext^2_\Lambda(T,F)=0.
\]
Thus every torsion sequence for $(\F[1],\T)$ splits, and every
object of $\B$ is a direct sum of objects from $\F[1]$ and $\T$.
By \cite[Example~4.2(2)]{CHZ}, the realization for the forward tilt of
$\B$ at $(\F[1],\T)$ is an equivalence. Since $\A^{(2)}$ is this
heart shifted by $\Sigma_\B^{-1}$, $G^{(2)}$ is also an equivalence.
Under this equivalence, the forward tilt of $\A^{(2)}$ identifies
with the standard heart $\B$. Hence $G^{(3)}$ is an equivalence by
\cite[Corollary~2.8]{CHZ}, and Theorem~\ref{thm:refined-height}
gives $h_\Lambda(\T)\le1$.
\end{proof}
\section{Silting complexes and the exact stabilization index}
\label{sec:silting-hrs-ideals}

Let $\P=[P^{-1}\xrightarrow d P^0]$ be a two-term silting complex over
$\Lambda$, concentrated in degrees $-1,0$. The torsion pair induced by
$\P$ is
\[
 \T(\P)=\Fac H^0(\P),\qquad
 \F(\P)=\Sub H^{-1}(\nu\P).
\]
For $X\in\mod\Lambda$, one has $X\in\T(\P)$ if and only if
$\Hom(\P,X[1])=0$, and $X\in\F(\P)$ if and only if
$\Hom(\P,X)=0$.
This torsion pair is due to Hoshino--Kato--Miyachi \cite{HKM}; see also
\cite[Theorem~2.1 and Proposition~2.4]{BZ}. Put
$J(\P)=\Ann_\Lambda\T(\P)\cap\Ann_\Lambda\F(\P)$.
We identify the first HRS ideal with the kernel in the Buan--Zhou
silting theorem and compute the successive ideals from the
differential of $\P$.

\subsection{The first HRS ideal}
\label{subsec:bz-tower}
Put $\Gamma=\End(\P)$ and choose an approximation triangle
\begin{equation}\label{eq:bz-approximation-triangle}
 \Lambda\longrightarrow\P'\xrightarrow f\P''\longrightarrow\Lambda[1],
 \qquad \P',\P''\in\add\P.
\end{equation}
The first map is a left $\add\P$-approximation.
The Buan--Zhou construction gives a two-term silting complex
\[
 \Q=[\Hom(\P,\P')\xrightarrow{\Hom(\P,f)}\Hom(\P,\P'')]
\]
over $\Gamma$, together with an epimorphism
$\Phi_\P:\Lambda\twoheadrightarrow\Lambda'=\End(\Q)$
\cite[Theorem~1.1]{BZ}. Put $\Phi=\Phi_\P$ and write
$\Phi_*:\mod\Lambda'\hookrightarrow\mod\Lambda$ for restriction
of scalars.  The kernel
$\mathfrak b_\Lambda(\P)$ of $\Phi_\P$
is generated by composites
\begin{equation}\label{eq: gen-silting}
 \Lambda\longrightarrow\P_I\longrightarrow\P_{II}[-1]
 \longrightarrow\Lambda,\qquad \P_I,\P_{II}\in\add\P,
\end{equation}
viewed as elements of $\End_\Lambda(\Lambda_\Lambda)=\Lambda$
\cite[Proposition~4.1]{BZ}. Hence this ideal depends only on
$\add\P$.

\begin{lemma}\label{cor:first-hrs-ideal-silting}
The first HRS ideal satisfies
$I_\Lambda(\T(\P))=\mathfrak b_\Lambda(\P)$.
Moreover, $\mathfrak b_\Lambda(\P)=0$ if and only if $\P$ is tilting.
\end{lemma}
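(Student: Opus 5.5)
The plan is to identify the essential image $\C_\T=\C_1$ of the second heart with $\Phi_*(\mod\Lambda')$ inside $\mod\Lambda$. Proposition~\ref{prop:quotient-description} then gives $I_\Lambda(\T(\P))=\Ann_\Lambda\C_1=\Ann_\Lambda\Phi_*(\mod\Lambda')=\ker\Phi=\mathfrak b_\Lambda(\P)$. The last equality holds because $\Phi$ is surjective, so $\Lambda'\simeq\Lambda/\ker\Phi$ and $\Ann_\Lambda(\Lambda')=\ker\Phi$.

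\textbf{Step 1: the first heart.} By \cite{KY}, the first heart $\B$ is equivalent to $\mod\Gamma$ via $\Hom(\P,-)$. Under this equivalence the induced torsion pair $(\F[1],\T)$ on $\B$ becomes the torsion pair of $\mod\Gamma$ determined by the two-term silting complex $\Q$. This is the content of the Buan--Zhou construction: $\Q$ is obtained by applying $\Hom(\P,-)$ to the approximation triangle \eqref{eq:bz-approximation-triangle}, which is the image of $\Lambda$.

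\textbf{Step 2: the second heart.} Apply \cite{KY} again. The backward tilt $\A'$ of $\mod\Gamma$ at this pair is, up to shift, the heart of the $t$-structure given by $\Q$, hence equivalent to $\mod\End(\Q)=\mod\Lambda'$. It remains to check that the composite
\[
\mod\Lambda'\simeq\A'\xrightarrow{G|_{\A'}}\mod\Lambda
\]
is restriction along $\Phi$. I would do this using the dg description of \cite{XYZ}. Realize $G$ as derived tensor with a bimodule built from $\P$; then the image of the projective generator $\Lambda'$ of $\A'$ is the $\Lambda$-module $\Lambda'$, with action through $\Phi$. Since $G|_{\A'}$ is exact and fully faithful (Proposition~\ref{prop:chz}), it is determined by this generator up to isomorphism. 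Combining Proposition~\ref{prop:internal-covers} with Proposition~\ref{prop:quotient-description}, the functor agrees with $\Phi_*$ on essential images. This compatibility of the realization functor with $\Phi$ is the main obstacle. If the dg route is awkward, I would instead verify directly that $\Phi_*(\mod\Lambda')$ is an exact full abelian subcategory containing $\T$ and $\F$ and equal to $\Adm_\T(\Lambda)$ of Proposition~\ref{prop:noohi-admissible}.

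\textbf{Step 3: the tilting criterion.} By Theorem~\ref{thm:refined-height} with $n=0$, the ideal $I_1$ vanishes iff $G^{(1)}$ is an equivalence. By \cite[Proposition~4.7]{CHZ}, this happens iff $\P$ is tilting. Alternatively, one can argue directly from \eqref{eq: gen-silting}. If $\Hom(\P,\P[-1])=0$, every generating composite vanishes. Conversely, if every such composite is zero, precompose a nonzero $g\colon\P_I\to\P_{II}[-1]$ with the left approximation $\Lambda\to\P'$ and use $\thick$-generation to force $g=0$. Citing \cite{CHZ} is cleaner.
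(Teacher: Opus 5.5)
Your overall route is the paper's. You identify the first heart with $\mod\Gamma$, transport the pair to that of $\Q$, recognize the second heart as $\C(\Q)[-1]\simeq\mod\Lambda'$, show the realization restricted to it is $\Phi_*$ via \cite{XYZ}, and take annihilators. Your Step 3 uses Theorem~\ref{thm:refined-height} with $n=0$ plus \cite[Proposition~4.7]{CHZ}, which is a valid substitute for the paper's appeal to \cite[Corollary~4.3]{BZ}.

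The gap is in Step 2. The one nontrivial claim of the lemma is stated, labelled ``the main obstacle'', and not proved. That claim is that the realization sends the projective generator of $\A'$ to $\Lambda'$ with right $\Lambda$-action through $\Phi$. Nothing in your argument connects the Buan--Zhou map $\Phi_\P$, defined from the approximation triangle \eqref{eq:bz-approximation-triangle}, to a realization functor. Saying $G$ is a derived tensor product with a bimodule built from $\P$ does not by itself produce $\Phi$.

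The paper closes this step in three moves. Put $E=\tau_{\le0}\End_{\mathrm{dg},\Lambda}(\P)$, $p\colon E\to\Gamma$, $F=\mathbf R\Hom_\Lambda(\P,-)$ and $G_0=F^{-1}p_*$.
\begin{enumerate}
\item Check that $G_0$ is a realization functor, so that Proposition~\ref{prop:chz} applies to it. It preserves bounded derived categories, using $F(\Lambda)\in\thick(E)$ and finiteness of $\Hom(F(\Lambda),p_*X[j])$. It is $t$-exact. It induces the inverse of $\Hom(\P,-)$ on the heart by \cite[Lemma~5.3(a)]{KY}.
\item Invoke \cite[Theorem~4.5 and Remark~4.8]{XYZ}: $\Q\simeq p^*F(\Lambda)[1]$, and the induced ring map $\Lambda=\End(F(\Lambda)[1])\to\End(\Q)=\Lambda'$ is exactly $\Phi_\P$. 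This is the missing link. Your Step~1 remark that $\Q$ is $\Hom(\P,-)$ applied to the triangle resolving $\Lambda$ is only its shadow.
\item Use the adjunction $p^*\dashv p_*$ and $G_0X\in\mod\Lambda$ (from Proposition~\ref{prop:chz}). For $X\in\C(\Q)[-1]$, naturally in $X$,
\[
G_0X\simeq\Hom_{\mathrm D(\Lambda)}(\Lambda,G_0X)
\simeq\Hom_{\mathrm D(E)}(F(\Lambda)[1],p_*X[1])
\simeq\Hom_{\mathrm D(\Gamma)}(\Q,X[1])=\Psi(X).
\]
These are isomorphisms of right $\Lambda$-modules with $\Lambda$ acting through $\Phi$, since precomposing with $\Phi(a)$ corresponds to precomposing with left multiplication by $a$. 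Since $\Psi$ is essentially surjective, $G_0(\C(\Q)[-1])=\Phi_*(\mod\Lambda')$ at once.
\end{enumerate}
With this in place, your generator-plus-internal-covers argument is correct but unnecessary. Your fallback, proving $\Phi_*(\mod\Lambda')=\Adm_\T(\Lambda)$ directly, would be a genuinely different and much harder argument that you have not sketched.
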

\begin{proof}
Let $\C(\P)$ (resp. $\C(\Q)$) denote the forward HRS heart associated to $\P$ (resp. $\Q$). The equivalence
\[
 \Hom(\P,-):\C(\P)\xrightarrow{\sim}\mod\Gamma
\]
takes the torsion pair $(\F(\P)[1],\T(\P))$ on $\C(\P)$
to $(\T(\Q),\F(\Q))$
by \cite[Theorem~2.3 and Proposition~3.8]{BZ}.
Thus the backward second heart identifies with
$\mathcal H=\C(\Q)[-1]\subseteq\Db(\Gamma)$, and the heart
equivalence for $\Q$ gives
\[
 \Psi:\mathcal H\xrightarrow{\sim}\mod\Lambda',\quad
\Psi(X)=\Hom_{\mathrm D(\Gamma)}(\Q,X[1]).
\]

To compute the image of $\mathcal H$ in $\mod\Lambda$, set
\[
 \begin{gathered}
 E=\tau_{\le0}\End_{\mathrm{dg},\Lambda}(\P),\qquad
 p:E\longrightarrow H^0(E)=\Gamma,\\
 F=\mathbf R\Hom_\Lambda(\P,-):\mathrm D(\Lambda)
 \xrightarrow{\sim}\mathrm D(E).
 \end{gathered}
\]
Here $\mathrm D(-)$ denotes the unbounded derived category.
The truncation $\tau_{\le0}$ denotes the good dg truncation.
Since $\P$ is silting,
$H^{>0}\End_{\mathrm{dg},\Lambda}(\P)=0$. Hence the natural inclusion
$E\hookrightarrow\End_{\mathrm{dg},\Lambda}(\P)$ is a quasi-isomorphism.
Since $\P$ is a compact generator, derived Morita theory gives
the equivalence $F$ \cite[Section~6.1]{Keller}.
Write $p^*=-\otimes_E^{\mathbf L}\Gamma$ for extension of scalars
and $p_*$ for restriction of scalars.

Put $G_0=F^{-1}p_*\colon\mathrm D(\Gamma)\to\mathrm D(\Lambda)$.
Fix $X\in\Db(\Gamma)$. The full subcategory of objects
$Y\in\mathrm D(E)$ for which
$\Hom_{\mathrm D(E)}(Y,p_*X[j])$ is finite-dimensional for every $j$
and vanishes for all but finitely many $j$ is thick.
It contains $E$, since
$\Hom_{\mathrm D(E)}(E,p_*X[j])\simeq H^j(X)$.
Since $F(\Lambda)\in\thick(E)$, the groups
\[
 H^j(G_0X)\simeq\Hom_{\mathrm D(E)}(F(\Lambda),p_*X[j])
\]
are also finite-dimensional and vanish for all but finitely
many $j$. Hence $G_0$ restricts to
$\Db(\Gamma)\to\Db(\Lambda)$.

On objects with finite-dimensional total cohomology, the standard
heart of $\mathrm D(E)$ is $\mod \Gamma$.
By \cite[Lemma~5.3(a)]{KY}, $F$ restricts to an equivalence
$\C(\P)\xrightarrow{\sim}\mod\Gamma$. Since $p_*$ leaves underlying
complexes unchanged, it is $t$-exact and induces the identity
on this standard heart. Thus $G_0$ is $t$-exact
for the corresponding $t$-structures and induces the inverse of
$\Hom(\P,-):\C(\P)\to\mod\Gamma$. With this identification
of hearts, $G_0$ is a realization functor; see also
\cite[Lemmas~2.13--2.14, Proposition~2.15 and Section~3]{XYZ}.

By \cite[Theorem~4.5 and Remark~4.8]{XYZ},
$\Q\simeq p^*F(\Lambda)[1]$.
Moreover, the induced map
\[
 \Lambda=\End_{\mathrm D(E)}(F(\Lambda)[1])
 \longrightarrow\End_{\mathrm D(\Gamma)}(\Q)=\Lambda'
\]
is precisely $\Phi_\P$.
For $X\in\mathcal H$, Proposition~\ref{prop:chz} gives
$G_0X\in\mod\Lambda$. Adjunction now gives
\[
 \Phi_*\Psi(X)
 \simeq\Hom_{\mathrm D(E)}(F(\Lambda)[1],p_*X[1])
 \simeq\Hom_{\mathrm D(\Lambda)}(\Lambda,G_0X)
 \simeq G_0X.
\]
These are isomorphisms of right $\Lambda$-modules. Indeed, for $a\in\Lambda$,
naturality of adjunction identifies precomposition by $\Phi_\P(a)$
with precomposition by left multiplication $l_a$ on $\Lambda$;
for $m:\Lambda\to G_0X$, evaluation at $1$ gives
$(m\circ l_a)(1)=m(a)=m(1)a$.

Since $\Psi$ is essentially surjective, $G_0(\mathcal H)=
\Phi_*(\mod\Lambda')$. By Proposition~\ref{prop:chz}, this is
the intrinsic second-heart image, whose annihilator is
$I_\Lambda(\T(\P))$. Therefore
\[
 I_\Lambda(\T(\P))
 =\Ann_\Lambda\Phi_*(\mod\Lambda')
 =\mathfrak b_\Lambda(\P).
\]
The last assertion follows from \cite[Corollary~4.3]{BZ}.
\end{proof}

\begin{corollary}\label{cor:silting-faithfulness-criterion}
If $H^0(\P)\oplus H^{-1}(\nu\P)$ is faithful over $\Lambda$,
then $\P$ is tilting and $\mathfrak b_\Lambda(\P)=0$.
\end{corollary}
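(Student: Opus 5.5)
The plan is to feed the faithfulness hypothesis into Lemma~\ref{lem:square-zero-annihilator}. Since $\T(\P)=\Fac H^0(\P)$ and $\F(\P)=\Sub H^{-1}(\nu\P)$, each module in $\T(\P)$ is a quotient of a finite direct sum of copies of $H^0(\P)$, and each module in $\F(\P)$ is a submodule of such a sum of copies of $H^{-1}(\nu\P)$. Annihilators are preserved by finite direct sums, and they can only grow under quotients and submodules. This gives
\[
\Ann_\Lambda\T(\P)=\Ann_\Lambda H^0(\P),\qquad
\Ann_\Lambda\F(\P)=\Ann_\Lambda H^{-1}(\nu\P).
\]
Therefore
\[
J(\P)=\Ann_\Lambda H^0(\P)\cap\Ann_\Lambda H^{-1}(\nu\P)=\Ann_\Lambda\bigl(H^0(\P)\oplus H^{-1}(\nu\P)\bigr),
\]
and the faithfulness hypothesis says exactly that $J(\P)=0$.

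Next, Lemma~\ref{lem:square-zero-annihilator}(1) gives $I_n\subseteq J_\T$ for every $n$. In particular the first HRS ideal satisfies $I_\Lambda(\T(\P))=I_1\subseteq J(\P)=0$.

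Finally, Lemma~\ref{cor:first-hrs-ideal-silting} identifies $I_\Lambda(\T(\P))$ with $\mathfrak b_\Lambda(\P)$, so $\mathfrak b_\Lambda(\P)=0$. The same lemma states that $\mathfrak b_\Lambda(\P)=0$ exactly when $\P$ is tilting, which completes the argument.

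There is no real obstacle. The only point to verify carefully is the annihilator equality for $\Fac$ and $\Sub$. One inclusion is immediate because $H^0(\P)\in\T(\P)$ and $H^{-1}(\nu\P)\in\F(\P)$. The other holds because an element killing a module also kills every finite direct sum of it, and hence every quotient or submodule of that sum.
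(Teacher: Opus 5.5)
Your proposal is correct and follows essentially the same route as the paper. You show $J(\P)=\Ann_\Lambda\bigl(H^0(\P)\oplus H^{-1}(\nu\P)\bigr)=0$ via the $\Fac$/$\Sub$ annihilator equalities, then get $I_1\subseteq J(\P)=0$ from Lemma~\ref{lem:square-zero-annihilator}, and conclude with Lemma~\ref{cor:first-hrs-ideal-silting}.
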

\begin{proof}
Since a module has the same annihilator as the class of its factor
modules and as the class of its submodules,
\[
 J(\P)=\Ann_\Lambda\bigl(H^0(\P)\oplus H^{-1}(\nu\P)\bigr)=0.
\]
Since Lemma~\ref{lem:square-zero-annihilator} gives
$I_1\subseteq J(\P)=0$, Lemma~\ref{cor:first-hrs-ideal-silting} gives
$\mathfrak b_\Lambda(\P)=0$ and shows that $\P$ is tilting.
\end{proof}

Iterate the Buan--Zhou construction, starting with
$(\Lambda_0^{\mathrm{BZ}},\P_0)=(\Lambda,\P)$:
\[
 (\Lambda_i^{\mathrm{BZ}},\P_i),\quad(\Gamma_{i},\Q_i),\quad
 (\Lambda_{i+1}^{\mathrm{BZ}},\P_{i+1}).
\]
Here $\Gamma_i=\End(\P_i)$ and
$\Lambda_{i+1}^{\mathrm{BZ}}=\End(\Q_i)$.
There are quotient maps $\Lambda_i^{\mathrm{BZ}}\twoheadrightarrow
\Lambda_{i+1}^{\mathrm{BZ}}$ and $\Gamma_{i}\twoheadrightarrow \Gamma_{i+1}$.
One increase in $i$ consists of two successive HRS tilts:
from $\mod\Lambda_i^{\mathrm{BZ}}$ to $\mod \Gamma_{i}$, and then to
$\mod\Lambda_{i+1}^{\mathrm{BZ}}$.

\begin{corollary}\label{lem:bz-hrs-tower}
\label{cor:bz-ideal-increment}
The silting tower identifies the alternating hearts and their
transported torsion pairs as
\[
 \A^{(2i)}\simeq\mod\Lambda_i,\qquad
 \A^{(2i+1)}\simeq\mod \Gamma_{i},
\]
where $\Lambda_i=\Lambda/I_i$ and the pairs are induced by
$\P_i$ and $\Q_i$, respectively.
Under the composite quotient maps,
\begin{equation}\label{eq:bz-ideal-increment}
 \Lambda_i^{\mathrm{BZ}}=\Lambda_i,\qquad
 I_{i+1}/I_i=\mathfrak b_{\Lambda_i}(\P_i).
\end{equation}
Thus $I_{i+1}=I_i$ if and only if $\P_i$ is tilting.
\end{corollary}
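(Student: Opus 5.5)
The argument is an induction on $i$. At each stage I combine Lemma~\ref{cor:first-hrs-ideal-silting}, applied to the current algebra, with the recursion of Theorem~\ref{thm:hrs-ideal-recursion}. The inductive claim at stage $i$ has four parts:
\begin{itemize}
\item $\Lambda_i^{\mathrm{BZ}}=\Lambda/I_i$ as a quotient of $\Lambda$;
\item the even heart $\A^{(2i)}$ is identified with $\mod\Lambda_i$, and its transported torsion pair is the one induced by $\P_i$;
\item under this identification, $\C_i\subseteq\mod\Lambda$ is $\Phi_*(\mod\Lambda_i^{\mathrm{BZ}})$, where $\Phi$ denotes the composite quotient map;
\item the pair induced by $\P_i$ coincides with $(\T_i,\F_i)=(\T,\F)$ inside $\mod\Lambda_i$.
\end{itemize}
The case $i=0$ is trivial, since $\P_0=\P$ and $I_0=0$.

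For the inductive step I apply the proof of Lemma~\ref{cor:first-hrs-ideal-silting} verbatim over $\Lambda_i$ with the complex $\P_i$. The equivalence $\Hom(\P_i,-)$ identifies the forward HRS heart $\A^{(2i+1)}$ with $\mod\Gamma_i$. By \cite[Theorem~2.3, Proposition~3.8]{BZ}, it carries the induced pair $(\F(\P_i)[1],\T(\P_i))$ to the pair induced by $\Q_i$. This gives the odd identification. The realization functor $G_0$ built there identifies the backward heart with $\mod\Lambda_{i+1}^{\mathrm{BZ}}$, where $\Lambda_{i+1}^{\mathrm{BZ}}=\End(\Q_i)$. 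Its essential image in $\mod\Lambda_i$ is $\Phi_{\P_i*}(\mod\Lambda_{i+1}^{\mathrm{BZ}})$.

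By Proposition~\ref{prop:chz} and \cite[Corollary~2.8]{CHZ}, the realization functors of Definition~\ref{def:iterative} may be replaced by $G_0$ without changing essential images; Theorem~\ref{thm:hrs-ideal-recursion} says this independence holds. So, inside $\C_i=\mod\Lambda_i$, the subcategory $\C_{i+1}$ equals $\Phi_{\P_i*}(\mod\Lambda_{i+1}^{\mathrm{BZ}})$. Taking annihilators and using Proposition~\ref{prop:quotient-description} gives $I_{i+1}/I_i=I_{\Lambda_i}(\T_i)=\ker\Phi_{\P_i}=\mathfrak b_{\Lambda_i}(\P_i)$. Hence $\Lambda_{i+1}^{\mathrm{BZ}}\simeq\Lambda_i/\mathfrak b_{\Lambda_i}(\P_i)=\Lambda/I_{i+1}$.

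It remains to check that the torsion pair transported to $\A^{(2i+2)}\simeq\mod\Lambda_{i+1}$ is the one induced by $\P_{i+1}$. Here $\P_{i+1}$ is the next Buan--Zhou complex, that is, the one produced from $\Q_i$ on the $\Gamma_i$ side. I would use \cite[Theorem~2.3, Proposition~3.8]{BZ} for $\Q_i$: the heart equivalence $\Psi$ for $\Q_i$ sends the backward-tilt pair $(\T(\Q_i),\Sigma^{-1}\F(\Q_i)[1])$ to the pair induced by the Buan--Zhou silting complex over $\End(\Q_i)$. Since $\Phi_*\Psi\simeq G_0$, the embedding identifies this pair with $(\T_i,\F_i)$, and Proposition~\ref{prop:chz} then identifies it with $(\T,\F)$. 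The final assertion follows from \eqref{eq:bz-ideal-increment} and the tilting criterion in Lemma~\ref{cor:first-hrs-ideal-silting}.

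\textbf{Main obstacle.} The delicate point is the identification of the transported pair at even steps, together with its compatibility with the chosen realization functors. Two things must agree: the torsion pair on $\mod\Lambda_{i+1}^{\mathrm{BZ}}$ induced by $\P_{i+1}$ in the Buan--Zhou sense, and the pair obtained by restricting $(\T,\F)$ along the quotient. Both are characterized as full subcategories of $\mod\Lambda$, so I would compare them there. The first step is to check that the pair induced by $\P_{i+1}$ has torsion class $\Fac H^0(\P_{i+1})$, viewed inside $\mod\Lambda$. Second, I would show that this torsion class is contained in $\T$, and that the corresponding torsion-free class is contained in $\F$. Since both are torsion pairs on the same category $\mod\Lambda_{i+1}$, these two containments force equality.
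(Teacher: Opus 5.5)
Your proposal is correct and follows the paper's proof: an induction that applies Lemma~\ref{cor:first-hrs-ideal-silting} over $\Lambda_i$, uses \cite[Proposition~3.8]{BZ} for the transported pair, and identifies the quotients through Theorem~\ref{thm:hrs-ideal-recursion}; the containment check in your ``main obstacle'' paragraph is not needed, because $\Phi_*\Psi\simeq G_0$ together with Proposition~\ref{prop:chz} already settles the even-step pair. One small slip: the induced pair on the backward heart $\C(\Q_i)[-1]$ is $(\F(\Q_i),\T(\Q_i)[-1])$, the shift of $(\F(\Q_i)[1],\T(\Q_i))$, not $(\T(\Q_i),\Sigma^{-1}\F(\Q_i)[1])$.
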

\begin{proof}
The first-heart equivalence gives $\A^{(1)}\simeq\mod \Gamma_{0}$.
By Lemma~\ref{cor:first-hrs-ideal-silting}, the second-heart image is
$\mod(\Lambda/\mathfrak b_\Lambda(\P))$.
The transported pair on this quotient is induced by $\P_1$
by \cite[Proposition~3.8]{BZ}.
Theorem~\ref{thm:hrs-ideal-recursion} identifies
$\Lambda/\mathfrak b_\Lambda(\P)$ with $\Lambda/I_1$
as quotients of $\Lambda$.
Repeating this argument over $\Lambda_i$ gives
\eqref{eq:bz-ideal-increment}. The last assertion follows from
Lemma~\ref{cor:first-hrs-ideal-silting}.
\end{proof}
By Corollary~\ref{cor:bz-ideal-increment}, the ideal $I_{i+1}$ is the inverse image of the first HRS ideal
$\mathfrak b_{\Lambda_i}(\P_i)$ under
$\Lambda\twoheadrightarrow\Lambda_i$.

\subsection{Reduction modulo an ideal}
For an ideal $J$ of $\Lambda$, the quotient map
$\Lambda\twoheadrightarrow\Lambda/J$ sends a right module $X$
to $X\otimes_\Lambda\Lambda/J\simeq X/XJ$. Applied termwise to
$\P$, it gives
\[
 \P\otimes_\Lambda\Lambda/J
 =[P^{-1}/P^{-1}J\xrightarrow{\bar d} P^0/ P^0J],\qquad
 \bar d(u+P^{-1}J)=d(u)+ P^0J.
\]
Since $d(P^{-1}J)\subseteq  P^0J$, the map $\bar d$ is well-defined.
In matrix form, its entries are those of $d$ modulo $J$.
Since the terms of $\P$ are projective, this also computes the
derived tensor product. The following theorem allows us to use
these quotients in place of successive approximation triangles.

\begin{theorem}\label{thm:hrs-base-change}
For every ideal $J$ of $\Lambda$, the complex
$\P\otimes_\Lambda\Lambda/J$ is two-term silting.
Put $\Lambda_1=\Lambda/\mathfrak b_\Lambda(\P)$, and let
$\P^+$ be the Buan--Zhou successor of $\Q$. Then
for $\overline\P=\P\otimes_\Lambda\Lambda_1$ one has
\begin{equation}\label{eq:hrs-base-change-add}
 \add\overline\P=\add\P^+
 \quad\text{in }\Kb(\proj\Lambda_1).
\end{equation}
\end{theorem}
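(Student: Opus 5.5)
The plan has two parts: first show that reduction preserves the two-term silting property, then identify $\add\overline\P$ with $\add\P^+$ by matching the torsion pairs the two complexes induce on $\mod\Lambda_1$.

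\emph{Silting is preserved by reduction.} The functor $-\otimes_\Lambda\Lambda/J:\Kb(\proj\Lambda)\to\Kb(\proj\Lambda/J)$ is a triangle functor sending $\Lambda$ to $\Lambda/J$. Hence it sends $\thick(\P)=\Kb(\proj\Lambda)$ onto a thick subcategory containing $\Lambda/J$, which is all of $\Kb(\proj\Lambda/J)$. For presilting we only need $\Hom(\overline\P,\overline\P[1])=0$ with $\overline\P=\P\otimes_\Lambda\Lambda/J$, since $\overline\P$ is two-term. By adjunction with restriction of scalars,
\[
\Hom_{\Kb(\proj\Lambda/J)}(\overline\P,\overline\P[1])\simeq\Hom_{\Db(\Lambda)}(\P,\overline\P[1]).
\]
A morphism $\P\to\overline\P[1]$ is a map $P^{-1}\to\overline P^0$ modulo homotopy. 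Lift it along the surjection $P^0\twoheadrightarrow\overline P^0$ to a map $P^{-1}\to P^0$, using projectivity of $P^{-1}$. This lift gives an element of $\Hom(\P,\P[1])=0$, so it is null-homotopic, $h=s^0d+ds^{-1}$. Reducing the homotopy modulo $J$ shows the original map is null-homotopic. Thus $\overline\P$ is two-term silting.

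\emph{Identifying $\add\overline\P$ with $\add\P^+$.} Basic two-term silting complexes are determined up to $\add$ by their torsion classes \cite{AIR}, so it suffices to show that $\T(\overline\P)=\T(\P^+)$ in $\mod\Lambda_1$. By Corollary~\ref{cor:bz-ideal-increment} (via \cite[Proposition~3.8]{BZ}), $\P^+$ induces the transported pair, so $\T(\P^+)=\T(\P)$, viewed inside $\mod\Lambda_1$. Note that $\T(\P)\subseteq\mod\Lambda_1$ because $I_1\subseteq J(\P)$. For $X\in\mod\Lambda_1$, adjunction gives
\[
\Hom_{\Db(\Lambda_1)}(\overline\P,X[1])\simeq\Hom_{\Db(\Lambda)}(\P,X[1]),
\]
so $X\in\T(\overline\P)$ iff $X\in\T(\P)$. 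Hence $\T(\overline\P)=\T(\P)\cap\mod\Lambda_1=\T(\P)$. The same argument with $\Hom(-,X)$ gives $\F(\overline\P)=\F(\P)$.

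\emph{Main obstacle.} Since $\overline\P$ need not be basic, I pass to $\add$, which is also why the statement is phrased with $\add$. The step needing care is that both torsion classes are computed in the same category $\mod\Lambda_1$, and that $\P^+$ is indeed identified by its torsion class there. I would make this explicit via the bijection of \cite{AIR} applied over $\Lambda_1$, using that both complexes are two-term silting in $\Kb(\proj\Lambda_1)$.
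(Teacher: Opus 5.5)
Your proposal is correct and is essentially the paper's argument. Adjunction identifies $\T(\overline\P)$ and $\F(\overline\P)$ with $\T(\P)$ and $\F(\P)$ inside $\mod\Lambda_1$, Buan--Zhou identifies these with the pair induced by $\P^+$, and the bijection of \cite{AIR} over $\Lambda_1$ then gives $\add\overline\P=\add\P^+$. The one difference is that you prove the two-term silting property of $\P\otimes_\Lambda\Lambda/J$ by hand (lifting along $P^0\twoheadrightarrow\overline P^0$ and reducing the homotopy), whereas the paper cites \cite[Corollary~2.4]{Breaz}; for the generation step, phrase it as ``the preimage of $\thick(\overline\P)$ is a thick subcategory containing $\P$'' rather than speaking of the image being thick.
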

\begin{proof}
Write $C=\Lambda/J$ and
$\P_C=[P^{-1}/P^{-1}J\xrightarrow{d_C} P^0/ P^0J]$.
By \cite[Corollary~2.4]{Breaz}, $\P_C$ is silting.

Now take $J=\mathfrak b_\Lambda(\P)$. Regard $X\in\mod\Lambda_1$ also as a
$\Lambda$-module via the quotient map. Since $\P$ is a bounded
projective complex, tensor--restriction adjunction gives, for $j=0,1$,
\[
 \Hom_{\Db(\Lambda_1)}(\overline\P,X[j])
 \simeq\Hom_{\Db(\Lambda)}(\P,X[j]).
\]
Thus $X$ belongs to $\T(\overline\P)$, respectively
$\F(\overline\P)$, exactly when its restriction belongs to
$\T(\P)$, respectively $\F(\P)$.
Set $\mathcal X(\Q)=\Hom(\Q,\F(\Q)[1])$ and
$\mathcal Y(\Q)=\Hom(\Q,\T(\Q))$.
By \cite[Theorem~4.4]{BZ}, restriction identifies these two classes
with $\T(\P)$ and $\F(\P)$.
By \cite[Proposition~3.8]{BZ}, $(\mathcal X(\Q),\mathcal Y(\Q))$
is also the pair induced by $\P^+$. Consequently
\begin{equation}\label{eq:base-change-torsion-class}
 (\T(\overline\P),\F(\overline\P))
 =(\T(\P^+),\F(\P^+)).
\end{equation}
The bijection between functorially finite torsion classes and
basic two-term silting complexes \cite[Theorems~2.7 and~3.2]{AIR}
now identifies their basic representatives. Therefore their
additive closures agree, as asserted.
\end{proof}

\begin{corollary}\label{cor:iterated-hrs-base-change}
At every even stage of HRS tilting,
\begin{equation}\label{eq:direct-hrs-ideal-recursion}
 \add\P_i=\add(\P\otimes_\Lambda\Lambda_i),\qquad
 I_{i+1}/I_i=
 \mathfrak b_{\Lambda_i}(\P\otimes_\Lambda\Lambda_i).
\end{equation}
\end{corollary}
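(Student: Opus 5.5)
We argue by induction on $i$, proving simultaneously that $\add\P_i=\add(\P\otimes_\Lambda\Lambda_i)$ in $\Kb(\proj\Lambda_i)$ and the formula for $I_{i+1}/I_i$. For $i=0$ we have $\Lambda_0=\Lambda$ and $\P_0=\P$, so the first equality is trivial. The second is Corollary~\ref{cor:bz-ideal-increment} for $i=0$, combined with Lemma~\ref{cor:first-hrs-ideal-silting}.

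For the inductive step, assume $\add\P_i=\add(\P\otimes_\Lambda\Lambda_i)$. Corollary~\ref{cor:bz-ideal-increment} gives $\Lambda_i^{\mathrm{BZ}}=\Lambda_i$ and $I_{i+1}/I_i=\mathfrak b_{\Lambda_i}(\P_i)$. By the description via the composites \eqref{eq: gen-silting}, the ideal $\mathfrak b_{\Lambda_i}(\P_i)$ depends only on $\add\P_i$. Hence it equals $\mathfrak b_{\Lambda_i}(\P\otimes_\Lambda\Lambda_i)$, which is the second formula at stage $i$.

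Next apply Theorem~\ref{thm:hrs-base-change} over the algebra $\Lambda_i$ to the silting complex $\P_i$. Its quotient $\Lambda_i/\mathfrak b_{\Lambda_i}(\P_i)=\Lambda_i/(I_{i+1}/I_i)=\Lambda_{i+1}$, and $\P_i^+=\P_{i+1}$ by the definition of the iterated tower. The theorem therefore gives
\[
\add\P_{i+1}=\add(\P_i\otimes_{\Lambda_i}\Lambda_{i+1}).
\]
It remains to replace $\P_i$ by $\P\otimes_\Lambda\Lambda_i$. The functor $-\otimes_{\Lambda_i}\Lambda_{i+1}$ is additive on $\Kb(\proj\Lambda_i)$, so it preserves equality of additive closures. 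Transitivity of base change gives
\[
(\P\otimes_\Lambda\Lambda_i)\otimes_{\Lambda_i}\Lambda_{i+1}\simeq\P\otimes_\Lambda\Lambda_{i+1}.
\]
Together these yield $\add\P_{i+1}=\add(\P\otimes_\Lambda\Lambda_{i+1})$, which completes the induction.

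The main point requiring care is the identification $\P_i^+=\P_{i+1}$ and the compatibility of the quotient maps. One must check two things. First, the Buan--Zhou successor of $\Q_i$ in Theorem~\ref{thm:hrs-base-change} is by construction the complex $\P_{i+1}$ of the tower. Second, the quotient $\Lambda_{i+1}^{\mathrm{BZ}}$ agrees with $\Lambda/I_{i+1}$ as a quotient of $\Lambda$, which is exactly \eqref{eq:bz-ideal-increment}. The choice of representative within $\add$ is harmless: $\mathfrak b$ and the induced torsion pair depend only on the additive closure.
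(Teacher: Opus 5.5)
Your proposal is correct and follows essentially the same route as the paper's proof. Both argue by induction on $i$: apply Theorem~\ref{thm:hrs-base-change} over $\Lambda_i$ to get $\add\P_{i+1}=\add(\P_i\otimes_{\Lambda_i}\Lambda_{i+1})$, replace $\P_i$ by $\P\otimes_\Lambda\Lambda_i$ using additivity of the tensor functor and associativity, and read off the ideal formula from \eqref{eq:bz-ideal-increment} together with the fact that $\mathfrak b$ depends only on the additive closure. Your explicit check that $\P_i^+=\P_{i+1}$ and that the quotient maps agree as quotients of $\Lambda$ is a point the paper leaves implicit.
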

\begin{proof}
The assertion holds for $i=0$.
Assume it holds at $i$. Theorem~\ref{thm:hrs-base-change}
identifies $\add\P_{i+1}$ with
$\add(\P_i\otimes_{\Lambda_i}\Lambda_{i+1})$.
Since tensor product preserves split summands, we replace $\P_i$ by
$\P\otimes_\Lambda\Lambda_i$ in this equality.
Associativity then gives
$\add\P_{i+1}=\add(\P\otimes_\Lambda\Lambda_{i+1})$.
Since the ideal \eqref{eq: gen-silting} depends only on the additive
closure, \eqref{eq:bz-ideal-increment} gives the formula for each increment.
\end{proof}

Equality of additive closures identifies the torsion pairs, first
HRS ideals and tilting property of $\P_i$ and
$\P\otimes_\Lambda\Lambda_i$. It need not identify their summand
multiplicities or endomorphism algebras. We use the actual Buan--Zhou complexes for the quotient maps
in the tower, and use the reduced complexes to compute the ideals.

\subsection{Two parity heights}
\label{subsec:parity-stabilization}
The odd hearts give a second ideal filtration. Set
$K_i=\ker(\Gamma_{0}\twoheadrightarrow \Gamma_{i})$, so that
\begin{equation}\label{eq:odd-hrs-ideal-recursion}
 \Gamma_{i}=\Gamma_{0}/K_i,\qquad
 K_{i+1}/K_i=\mathfrak b_{\Gamma_{i}}(\Q_i).
\end{equation}
Write $h_{\mathrm{ev}}=h_\Lambda(\T(\P))$ and
$h_{\mathrm{odd}}=h_{\Gamma_{0}}(\T(\Q_0))$.
Thus these are the first indices at which $\P_i$ and $\Q_i$,
respectively, become tilting.

\begin{proposition}\label{prop:exact-parity-stabilization}
The heights satisfy $h_{\mathrm{odd}}\le h_{\mathrm{ev}}
\le h_{\mathrm{odd}}+1$. The least $s\ge1$ such that all
$G^{(m)}$, $m\ge s$, are equivalences is
\begin{equation}\label{eq:exact-parity-stabilization}
 \sigma_\Lambda(\T(\P))
 =\max\{2h_{\mathrm{ev}},2h_{\mathrm{odd}}+1\}
 =h_{\mathrm{ev}}+h_{\mathrm{odd}}+1.
\end{equation}
\end{proposition}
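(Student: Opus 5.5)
The plan is to determine exactly which $G^{(m)}$ are equivalences in terms of the tilting behaviour of the complexes $\P_i$ and $\Q_i$ in the silting tower of Corollary~\ref{lem:bz-hrs-tower}.

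\textbf{Equivalence criterion by parity.} First, $G^{(2i+1)}:\Db(\A^{(2i+1)})\to\Db(\A^{(2i)})$ is the realization of the forward tilt of $\mod\Lambda_i$ at the pair induced by $\P_i$. By \cite[Proposition~4.7]{CHZ}, or equivalently Theorem~\ref{thm:refined-height} combined with Corollary~\ref{cor:bz-ideal-increment}, it is an equivalence if and only if $\P_i$ is tilting, that is, iff $i\ge h_{\mathrm{ev}}$. Second, $G^{(2i+2)}:\Db(\A^{(2i+2)})\to\Db(\A^{(2i+1)})$ is the backward tilt of $\A^{(2i+1)}\simeq\mod\Gamma_i$. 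Under this identification, the backward heart $\A^{(2i+2)}$ is the shift of the forward HRS heart of $\mod\Gamma_i$ at the pair induced by $\Q_i$, namely $\C(\Q_i)[-1]$ as in the proof of Lemma~\ref{cor:first-hrs-ideal-silting}. Since shifting does not affect whether a realization is an equivalence, and using \cite[Corollary~2.8]{CHZ} for independence of the chosen realization, $G^{(2i+2)}$ is an equivalence iff $\Q_i$ is tilting. Applying Theorem~\ref{thm:refined-height} and Lemma~\ref{cor:first-hrs-ideal-silting} to the odd tower \eqref{eq:odd-hrs-ideal-recursion}, this holds iff $i\ge h_{\mathrm{odd}}$. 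This identification of $G^{(2i+2)}$ with the forward tilt over $\Gamma_i$ is the step needing the most care.

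\textbf{Interlacing of the heights.} If $\P_i$ is tilting, then $G^{(2i+1)}$ is an equivalence, so by Proposition~\ref{prop:chz} ("in this case every realization for the backward tilt is an equivalence") $G^{(2i+2)}$ is an equivalence. Hence $\Q_i$ is tilting, which gives $h_{\mathrm{odd}}\le h_{\mathrm{ev}}$.

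Conversely, suppose $\Q_i$ is tilting. Then $G^{(2i+2)}$ is an equivalence. I will apply the same proposition to the tilt pair formed by the forward tilt of $\A^{(2i+1)}\simeq\mod\Gamma_i$ (up to shift) followed by its backward tilt. The latter is identified with the forward tilt of $\A^{(2i+2)}$, so $G^{(2i+3)}$ is an equivalence. Therefore $\P_{i+1}$ is tilting and $h_{\mathrm{ev}}\le h_{\mathrm{odd}}+1$.

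To make this rigorous, use the symmetry already present in the proof of Proposition~\ref{prop:absorption}. The equivalence $G^{(2i+2)}$ restricts to an exact equivalence on hearts that matches the transported torsion pairs. Transporting along it identifies the forward tilt of $\A^{(2i+2)}$ with the backward heart of the tilt of $\mod\Gamma_i$ at $\Q_i$; then invoke Proposition~\ref{prop:chz} and \cite[Corollary~2.8]{CHZ}.

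\textbf{The index.} By the first step, the set of $m$ with $G^{(m)}$ an equivalence consists of the odd $m=2i+1$ with $i\ge h_{\mathrm{ev}}$ and the even $m=2i+2$ with $i\ge h_{\mathrm{odd}}$. So the least odd value admissible from there on is $2h_{\mathrm{ev}}+1$, and the least even one is $2h_{\mathrm{odd}}+2$. The stabilization index $s$ is one plus the largest non-equivalence index, so it is the maximum of $2h_{\mathrm{ev}}$ (the last bad odd index is $2h_{\mathrm{ev}}-1$) and $2h_{\mathrm{odd}}+1$ (the last bad even index is $2h_{\mathrm{odd}}$). If there are no bad indices, both expressions give the value $1$, with the convention $2h_{\mathrm{ev}}\le 1$ when $h_{\mathrm{ev}}=0$.

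Finally, the interlacing leaves two cases. If $h_{\mathrm{ev}}=h_{\mathrm{odd}}=h$, the maximum is $2h+1=h_{\mathrm{ev}}+h_{\mathrm{odd}}+1$. If $h_{\mathrm{ev}}=h_{\mathrm{odd}}+1$, the maximum is $2h_{\mathrm{ev}}=h_{\mathrm{ev}}+h_{\mathrm{odd}}+1$. This gives \eqref{eq:exact-parity-stabilization}.
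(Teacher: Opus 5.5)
Your proof is correct. The parity criterion and the final index count match the paper's; the difference lies in how you obtain $h_{\mathrm{odd}}\le h_{\mathrm{ev}}\le h_{\mathrm{odd}}+1$.

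The paper gets the interlacing directly from silting theory. By \cite[Corollary~4.3]{BZ} the Buan--Zhou successor of a tilting complex is tilting, so $\P_i$ tilting $\Rightarrow$ $\Q_i$ tilting $\Rightarrow$ $\P_{i+1}$ tilting. Only afterwards does it translate tilting into equivalence of the $G^{(m)}$.

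You instead show that the set of $m$ with $G^{(m)}$ an equivalence is upward closed, and then read off the interlacing from the parity criterion. You use the last clause of Proposition~\ref{prop:chz} twice. First you apply it directly, giving $G^{(2i+1)}$ equivalence $\Rightarrow$ $G^{(2i+2)}$ equivalence. Then you apply it after writing $\A^{(2i+2)}$ as $\Sigma^{-1}$ of the forward tilt of $\A^{(2i+1)}$, giving $G^{(2i+2)}$ equivalence $\Rightarrow$ $G^{(2i+3)}$ equivalence. This is the same device as in Proposition~\ref{prop:absorption}.

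What each route buys: your upward-closedness statement is purely categorical and holds for an arbitrary torsion pair, with no silting input. The cost is the shift bookkeeping and the appeal to \cite[Corollary~2.8]{CHZ} for independence of the realization functor. The paper's route is a one-line citation.

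One small correction to your ``rigorous'' paragraph. The identification of the forward tilt of $\A^{(2i+2)}$ with a shift of the backward tilt of $\Sigma\A^{(2i+2)}$ follows from the shift relation alone. It does not come from transporting along $G^{(2i+2)}$. The fact that $G^{(2i+2)}$ is an equivalence is needed only as the input to Proposition~\ref{prop:chz}.
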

\begin{proof}
By \cite[Corollary~4.3]{BZ}, the Buan--Zhou successor of a tilting
complex is tilting. Hence
\[
 \P_i\text{ tilting}\ \Longrightarrow\ \Q_i\text{ tilting}
 \ \Longrightarrow\ \P_{i+1}\text{ tilting}.
\]
Taking $i=h_{\mathrm{ev}}$ in the first implication and
$i=h_{\mathrm{odd}}$ in the second gives
$h_{\mathrm{odd}}\le h_{\mathrm{ev}}\le h_{\mathrm{odd}}+1$.

The realization $G^{(2i+1)}$ is the forward realization for
$\P_i$. The realization $G^{(2i+2)}$ is backward; shifting its
source heart gives the forward heart for $\Q_i$. Since a shift
is an equivalence, Theorem~\ref{cor:first-hrs-ideal-stopping} and
Lemma~\ref{cor:first-hrs-ideal-silting} give
\[
 \begin{aligned}
 G^{(2i+1)}\text{ is an equivalence}
 &\ \Longleftrightarrow\ \P_i\text{ is tilting}
 \ \Longleftrightarrow\ i\ge h_{\mathrm{ev}},\\
 G^{(2i+2)}\text{ is an equivalence}
 &\ \Longleftrightarrow\ \Q_i\text{ is tilting}
 \ \Longleftrightarrow\ i\ge h_{\mathrm{odd}}.
 \end{aligned}
\]
If $h_{\mathrm{ev}}>0$, the last failing odd index is
$2h_{\mathrm{ev}}-1$.
If $h_{\mathrm{odd}}>0$, the last failing even index is
$2h_{\mathrm{odd}}$.
Taking one more than the last failure gives the maximum in
\eqref{eq:exact-parity-stabilization}.
The same formula gives $1$ when both heights vanish.
Finally,
$h_{\mathrm{ev}}=h_{\mathrm{odd}}$ or
$h_{\mathrm{ev}}=h_{\mathrm{odd}}+1$.
In each case the maximum equals the stated sum.
\end{proof}

\subsection{A matrix formula}
\label{subsec:hrs-ideal-algorithm}
Choose decompositions $ P^0=\bigoplus_{i=1}^r e_i\Lambda$ and
$P^{-1}=\bigoplus_{j=1}^s f_j\Lambda$, where $e_i,f_j$ are primitive
idempotents and repetitions are allowed.
Write $\mathbf d$ for the matrix of $d$ with respect to
these decompositions. A map $L: P^0\to P^{-1}$ has entries
$a_{ji}\in f_j\Lambda e_i$. 
We define a $k$-linear map
\begin{equation}\label{eq:matrix-defect-kernel}
	\Theta_{\mathbf{d}}:\Hom_\Lambda(P^0,P^{-1})
	\longrightarrow\End_\Lambda(P^{-1})\oplus\End_\Lambda(P^0),\quad L\mapsto(L\mathbf{d},\mathbf{d}L).
\end{equation}
A chain map $\P\to\P[-1]$ has just one possibly nonzero component
$L:P^0\to P^{-1}$, and its chain-map conditions are $Ld=dL=0$.
There are no nonzero homotopies between such maps. Hence
\begin{equation}\label{eq:negative-self-hom-kernel}
	\Hom_{\Kb(\proj\Lambda)}(\P,\P[-1])
	\simeq \ker\Theta_{\mathbf{d}}.
\end{equation}
\begin{lemma}\label{lem:matrix-hrs-ideal}The first HRS ideal is given by
\begin{equation}\label{eq:matrix-hrs-ideal-equality}
 I_\Lambda(\T(\P))=\mathfrak b_\Lambda(\P)
 =\langle a_{ji}\mid L=(a_{ji})\in\ker\Theta_{\mathbf d}\rangle,
\end{equation}
where the brackets denote the two-sided ideal generated by the entries.
\end{lemma}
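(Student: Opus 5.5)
The first equality is Lemma~\ref{cor:first-hrs-ideal-silting}, so the work is to identify $\mathfrak b_\Lambda(\P)$ with the ideal generated by the matrix entries of $\ker\Theta_{\mathbf d}$. By \cite[Proposition~4.1]{BZ}, $\mathfrak b_\Lambda(\P)$ is generated by the composites \eqref{eq: gen-silting}. Since this ideal depends only on $\add\P$, I may take $\P_I=\P^{\oplus a}$ and $\P_{II}=\P^{\oplus b}$ after enlarging summands. The plan is to prove the two inclusions separately.

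\emph{Inclusion of the matrix ideal into $\mathfrak b_\Lambda(\P)$.}
\begin{enumerate}
\item Fix $L=(a_{ji})\in\ker\Theta_{\mathbf d}$. By \eqref{eq:negative-self-hom-kernel}, $L$ is a chain map $\P\to\P[-1]$.
\item Take a morphism $\Lambda\to\P$ in $\Kb(\proj\Lambda)$. It is a chain map $\Lambda\to P^0$ in degree $0$, hence given by an element of $P^0=\bigoplus e_i\Lambda$. Choosing it to be $e_i\in e_i\Lambda$, in the $i$-th summand, is a legitimate morphism, since there is no condition in degree $-1$.
\item Take a morphism $\P[-1]\to\Lambda$. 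It is a map $P^{-1}\to\Lambda$ in degree $0$ of $\P[-1]$, and it must vanish on the image of $d$ in the appropriate sense. Here I must be careful with signs and with which term sits in degree $0$: in $\P[-1]$, $P^{-1}$ sits in degree $0$ and $P^0$ in degree $1$. A map to $\Lambda$ in degree $0$ is therefore any $g:P^{-1}\to\Lambda$, with no condition, because $\Lambda$ has no degree-$1$ term. Take $g$ to be the projection onto $f_j\Lambda$ followed by the inclusion into $\Lambda$.
\item The composite $\Lambda\to\P\xrightarrow{L}\P[-1]\to\Lambda$ is left multiplication by $a_{ji}$, up to the identification of $\End(\Lambda_\Lambda)$ with $\Lambda$ by left multiplication.
\end{enumerate}
This gives $a_{ji}\in\mathfrak b_\Lambda(\P)$.

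\emph{Inclusion of $\mathfrak b_\Lambda(\P)$ into the matrix ideal.} Any generator is a composite $g\circ M\circ u$, where:
\begin{itemize}
\item $u:\Lambda\to\P^{\oplus a}$ is a column of elements of $(P^0)^{a}$;
\item $M:\P^{\oplus a}\to\P^{\oplus b}[-1]$ is a matrix $(L_{pq})$ of elements of $\Hom_{\Kb}(\P,\P[-1])\simeq\ker\Theta_{\mathbf d}$;
\item $g:(P^{-1})^{b}\to\Lambda$ is arbitrary.
\end{itemize}
Homotopies vanish here, and since $\Lambda$ is a stalk complex the composite is computed termwise. It equals $\sum_{p,q} g_p L_{pq} u_q$. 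Expanding each $L_{pq}$ in the chosen decompositions, every term is of the form $x\,a_{ji}\,y$ with $x\in\Lambda f_j$ coming from $g$, $y\in e_i\Lambda$ coming from $u$, and $a_{ji}$ an entry of some $L\in\ker\Theta_{\mathbf d}$. The sum therefore lies in the two-sided ideal generated by these entries.

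The main obstacle is bookkeeping rather than substance. One must check that morphisms $\Lambda\to\P$ and $\P[-1]\to\Lambda$ in $\Kb(\proj\Lambda)$ are genuinely unconstrained as described: no homotopies, and no compatibility with $d$ beyond what is automatic. One must also track that the left-multiplication convention for $\End(\Lambda_\Lambda)\simeq\Lambda$ places $a_{ji}$ correctly, as $f_j\Lambda e_i$, sandwiched by the idempotents.
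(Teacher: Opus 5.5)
Your proposal is correct and follows the paper's proof essentially step for step: each entry $a_{ji}$ is realized as the composite $vLu(1)$ with $u(1)=e_i$ and $v$ the projection onto $f_j\Lambda$. Conversely, after inserting split inclusions and projections through $\P^{\oplus a}$ and $\P^{\oplus b}$, each generator of $\mathfrak b_\Lambda(\P)$ expands as $\sum_{i,j}y_ja_{ji}x_i$ with $y_j\in\Lambda f_j$ and $x_i\in e_i\Lambda$. One small correction: morphisms $\Lambda\to\P$ and $\P[-1]\to\Lambda$ do have nonzero null-homotopic representatives ($u\sim u+dh$ and $v\sim v+hd$), so your claim that homotopies vanish holds only for the middle map $\P\to\P[-1]$; this is harmless, because composites in $\Kb(\proj\Lambda)$ may be computed on any chain-map representatives (and indeed $Ldh=0=hdL$).
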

\begin{proof}
Let $\mathfrak a$ be the ideal generated by the entries of all
$L\in\ker\Theta_{\mathbf d}$. Given module maps
$u:\Lambda\to P^0$ and
$v:P^{-1}\to\Lambda$, the composite
\[
 \Lambda\xrightarrow{u}\P\xrightarrow{L}\P[-1]
 \xrightarrow{v}\Lambda
\]
is defined by chain maps. By \eqref{eq: gen-silting},
$vLu(1)\in\mathfrak b_\Lambda(\P)$. Taking $u(1)=e_i$ in the
$i$th summand of $ P^0$, and taking $v$ to be the projection onto
$f_j\Lambda$ followed by its inclusion into $\Lambda$, gives
$vLu(1)=a_{ji}$. Hence $\mathfrak a\subseteq\mathfrak b_\Lambda(\P)$.

Conversely, choose split inclusions of $\P_I$ and $\P_{II}$ in
\eqref{eq: gen-silting} into finite direct sums of $\P$.
Inserting the corresponding projections expresses each generator
of $\mathfrak b_\Lambda(\P)$ as a sum of terms $vLu(1)$ as above.
Write $u(1)=(x_i)_i$ and $v(f_j)=y_j$, where
$x_i\in e_i\Lambda$ and $y_j\in\Lambda f_j$. Then
\[
 vLu(1)=\sum_{i,j}y_ja_{ji}x_i\in\mathfrak a.
\]
Thus $\mathfrak b_\Lambda(\P)\subseteq\mathfrak a$. Together with
Lemma~\ref{cor:first-hrs-ideal-silting}, this proves
\eqref{eq:matrix-hrs-ideal-equality}.
\end{proof}

For a bound quiver algebra, expand the entries of $L$ in bases
of the spaces $f_j\Lambda e_i$. The equations
$L\mathbf d=\mathbf dL=0$ are
linear over $k$. The entries of a basis of their solution space
generate the first HRS ideal. Repeating this calculation with
$\mathbf d$
reduced modulo $I_i$ gives $I_{i+1}/I_i$ by
Corollary~\ref{cor:iterated-hrs-base-change}. The calculation stops
when this ideal is zero. 
\section{Kernel filtrations and the universal tilting quotient}
\label{sec:kernel-lifting}

We replace the repeated matrix calculations of the previous section
by a filtration of $H^{-1}(\P)$, and then show that HRS height is not a derived invariant.

\subsection{General kernel filtration}
\label{subsec:kernel-filtration}

Let $\P=[P^{-1}\xrightarrow d P^0]$ be two-term
silting. Put $\T=\Fac H^0(\P)$.
Write $t(X)$ for the torsion radical of a module $X$, and put
$\F=\{X:\Hom_\Lambda(H^0(\P),X)=0\}$.
Since every object of $\T$ is a quotient of a finite sum of copies
of $H^0(\P)$, one has
\[
t(X)=\sum_{f:H^0(\P)\to X}f(H^0(\P)).
\]
Let $\{I_n\}$ be the HRS ideal filtration, with $I_0=0$, and set
\[
\Lambda_n=\Lambda/I_n,\qquad P_n^{-1}=P^{-1}/P^{-1}I_n,\qquad  P_n^0= P^0/ P^0I_n.
\]
By Corollary~\ref{cor:iterated-hrs-base-change}, we may use the
add-equivalent representatives
\[
\P_n=\P\otimes_\Lambda\Lambda_n=[P_n^{-1}\xrightarrow{d_n} P_n^0],
\qquad d_n(u+P^{-1}I_n)=d(u)+ P^0I_n.
\]
We abbreviate $h_\Lambda(\T(\P))$ to $h_\Lambda(\P)$.
\begin{lemma}\label{lem:hom-id}
	For every two-term complex $\P$ of finitely generated projectives,
	there is a natural isomorphism
	\begin{equation}\label{eq:kernel-negative-hom}
		\Hom_{\Kb(\proj\Lambda)}(\P,\P[-1])
		\simeq \Hom_\Lambda(H^0(\P),H^{-1}(\P)).
	\end{equation}
\end{lemma}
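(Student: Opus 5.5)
The plan is to compute both sides directly from the two-term structure. By \eqref{eq:negative-self-hom-kernel}, whose argument uses only that $\P$ is a two-term complex of projectives, a morphism $\P\to\P[-1]$ in $\Kb(\proj\Lambda)$ is a single map $L\colon P^0\to P^{-1}$ with $Ld=0$ and $dL=0$. There are no nonzero homotopies, since a homotopy would have components $P^{-1}\to P^{-2}=0$ and $P^0\to P^{-1}[-1]$ in the wrong degrees. So the left side is the space of such $L$. We then check that this space identifies with $\Hom_\Lambda(H^0(\P),H^{-1}(\P))$, where
\[
H^0(\P)=\Coker d,\qquad H^{-1}(\P)=\ker d.
\]

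First, $dL=0$ says precisely that $\Im L\subseteq\ker d=H^{-1}(\P)$, so $L$ corestricts to a map $P^0\to H^{-1}(\P)$. Second, $Ld=0$ says this corestricted map vanishes on $\Im d$, so it factors uniquely through $P^0\twoheadrightarrow\Coker d=H^0(\P)$. Together these give a map $\bar L\colon H^0(\P)\to H^{-1}(\P)$.

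Conversely, given $g\colon H^0(\P)\to H^{-1}(\P)$, define $L$ as the composite
\[
P^0\twoheadrightarrow H^0(\P)\xrightarrow{g}H^{-1}(\P)\hookrightarrow P^{-1}.
\]
This $L$ satisfies $Ld=0$ because it kills $\Im d$, and $dL=0$ because its image lies in $\ker d$. The two constructions are mutually inverse because the projection $P^0\twoheadrightarrow H^0(\P)$ is epic and the inclusion $H^{-1}(\P)\hookrightarrow P^{-1}$ is monic.

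Both constructions are $k$-linear. Naturality with respect to chain maps $\P\to\P'$ of two-term complexes follows because $H^0$ and $H^{-1}$ are functorial and the two constructions commute with the induced maps on kernels and cokernels. No real obstacle is expected; the only point to state carefully is the vanishing of homotopies, which is a degree check.
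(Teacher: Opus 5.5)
Your proposal is correct and follows the paper's proof. A morphism $\P\to\P[-1]$ is a single map $L\colon P^0\to P^{-1}$ with $Ld=dL=0$, there are no nonzero homotopies, and such $L$ correspond bijectively to module maps $f\colon H^0(\P)\to H^{-1}(\P)$ via $L=\iota f\pi$, where $\pi\colon P^0\twoheadrightarrow H^0(\P)$ and $\iota\colon H^{-1}(\P)\hookrightarrow P^{-1}$ are the canonical maps.

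The degree check needs one cosmetic fix. The homotopy components are $P^i\to(\P[-1])^{i-1}=P^{i-2}$, that is, $P^{-1}\to P^{-3}=0$ and $P^0\to P^{-2}=0$.
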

\begin{proof}
	This follows from \cite[Lemmas~2.3(2) and~2.5]{HKM}.
	Explicitly, let $\pi: P^0\twoheadrightarrow H^0(\P)$ and
	$\iota:H^{-1}(\P)\hookrightarrow P^{-1}$ be the canonical maps.
	A module map $f:H^0(\P)\to H^{-1}(\P)$ corresponds to the negative
	chain map $L=\iota f\pi$. Conversely, every
	$L: P^0\to P^{-1}$ with $Ld=dL=0$ factors uniquely in this way.
	There are no nonzero homotopies in degree $-1$.
\end{proof}

The equality $d(P^{-1}I_n)= P^0I_n$ will identify
$H^{-1}(\P_n)$ with a quotient of $H^{-1}(\P)$.
We begin with the first HRS ideal.

\begin{lemma}\label{lem:kernel-first-identities}
	For $I=I_1$ one has
	\[
	I\in\T,\quad  d(P^{-1}I)= P^0I\text{ and }
	t(H^{-1}(\P))\subseteq H^{-1}(\P)\cap P^{-1}I.
	\]
\end{lemma}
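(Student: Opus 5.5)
The plan is to read all three assertions off the description of $I=I_1=\mathfrak b_\Lambda(\P)$ in Lemma~\ref{lem:matrix-hrs-ideal}, together with the correspondence of Lemma~\ref{lem:hom-id} between negative chain maps $L$ and module maps $f:H^0(\P)\to H^{-1}(\P)$ via $L=\iota f\pi$. The basic observation is this. If $L\in\ker\Theta_{\mathbf d}$, then every entry of $L$ lies in $I$. Hence
\[
L(P^0)\subseteq P^{-1}I .
\]
Moreover, $L(P^0)\subseteq\ker d=H^{-1}(\P)$.

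The third inclusion follows at once. Since $t(H^{-1}(\P))=\sum_f f(H^0(\P))$ and $\pi$ is surjective, each summand is $f(H^0(\P))=\iota f\pi(P^0)=L(P^0)$, which lies in $H^{-1}(\P)\cap P^{-1}I$.

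For $I\in\T$, I would argue that every generator $vLu(1)$ of $I$ from \eqref{eq: gen-silting} is an element of the image of $vL:P^0\to\Lambda$. Conversely, each such image consists of elements of this form and lies in $I$. So $I$ is the sum of the right submodules $vL(P^0)$ over all $v:P^{-1}\to\Lambda$ and $L\in\ker\Theta_{\mathbf d}$. The sum is really a finite one, since $I$ is finite-dimensional. Each summand is $v\iota f(H^0(\P))$, a quotient of $H^0(\P)$, and so lies in $\T=\Fac H^0(\P)$. A finite sum of such submodules is a quotient of their direct sum, so $I\in\T$.

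For $d(P^{-1}I)=P^0I$, the inclusion $\subseteq$ holds because $d$ is $\Lambda$-linear. For the reverse inclusion it suffices, by linearity, to treat elements $xa$ with $x\in P^0$ and $a=y\,vLu(1)\,z$ a generator as above. Put $w=L(u(z))$. Then $w\in P^{-1}I\cap\ker d$ by the basic observation, and $a=y\,v(w)$.

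Let $\varphi:\Lambda\to P^0$ be the map $1\mapsto xy$, and set $g=\varphi v:P^{-1}\to P^0$. Then $xa=g(w)$. Now $g$ is a chain map $\P\to\P[1]$, and silting gives $\Hom(\P,\P[1])=0$. So $g=h^0d+dh^{-1}$ for some $h^0\in\End(P^0)$ and $h^{-1}\in\End(P^{-1})$. Since $dw=0$, we get
\[
xa=g(w)=d\bigl(h^{-1}(w)\bigr).
\]
Here $h^{-1}(w)\in P^{-1}I$, because $h^{-1}$ is $\Lambda$-linear and $w\in P^{-1}I$. Summing over generators gives $P^0I\subseteq d(P^{-1}I)$.

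The step I expect to be most delicate is the equality $d(P^{-1}I)=P^0I$. The naive route, showing that $H^0(\P)\otimes_\Lambda I=\Coker(P^{-1}I\to P^0I)$ vanishes, is not obvious. The fix is to combine the null-homotopy coming from silting with the fact that negative chain maps already take values in $P^{-1}I\cap\ker d$. The points to check there are that this null-homotopy has the stated form and that a general element of $P^0I$ is a sum of elements $x\,y\,vLu(z)$.
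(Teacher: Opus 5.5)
Your proof is correct and follows essentially the same route as the paper. You write $I$ as the sum of the images $v\iota f(H^0(\P))$, which gives $I\in\T$ and the inclusion for $t(H^{-1}(\P))$; for $P^0I\subseteq d(P^{-1}I)$ you write $g=\varphi v\colon P^{-1}\to P^0$ as $h^0d+dh^{-1}$ via presilting and evaluate on $w=L(u(z))\in P^{-1}I\cap\ker d$, exactly as the paper does with its $g(u)=v\phi(u)$. The extra factors $y,z$ in your generators are harmless but unnecessary, since the elements $vLu(1)$ are already closed under left and right multiplication by $\Lambda$.
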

\begin{proof}
	Let $\pi: P^0\twoheadrightarrow H^{0}(\P)$ and
	$\iota:H^{-1}(\P)\hookrightarrow P^{-1}$ be the canonical maps.
	Lemmas~\ref{lem:hom-id} and~\ref{lem:matrix-hrs-ideal} give
	\begin{equation}\label{eq:kernel-first-coefficients}
	I=\sum_{\phi:P^{-1}\to\Lambda}\sum_{f:H^{0}(\P)\to H^{-1}(\P)}
		\phi\iota f(H^{0}(\P)).
	\end{equation}
	Thus $I\in\Fac H^{0}(\P)=\T$.
	Lemma~\ref{lem:matrix-hrs-ideal} shows that every negative chain
	map $L=\iota f\pi$ has image in $P^{-1}I$.
	Taking the sum of these images gives
	$t(H^{-1}(\P))\subseteq H^{-1}(\P)\cap P^{-1}I$.
	
	It remains to prove $ P^0I\subseteq d(P^{-1}I)$.
	A generating term of $ P^0I$ has the form $v\phi L(v')$,
	where $v,v'\in P^0$ and $\phi:P^{-1}\to\Lambda$.
	Set $g(u)=v\phi(u)$.
	Since $\P$ is presilting, there exist
	$X\in\End_\Lambda( P^0)$ and $Y\in\End_\Lambda(P^{-1})$
	with $g=Xd+dY$. Hence
	\[
	v\phi L(v')=gL(v')=dYL(v')\in d(P^{-1}I),
	\]
	since $dL=0$ and $L( P^0)\subseteq P^{-1}I$.
	The reverse inclusion follows from $\Lambda$-linearity.
\end{proof}

The submodules $H^{-1}(\P)\cap P^{-1}I_n$ give the following description
of the cohomology of $\P_n$.

\begin{theorem}\label{thm:main-kernel-filtration}
	Keep the notation above and set $R_n=H^{-1}(\P)\cap P^{-1}I_n$.
	We have:
	\begin{enumerate}[label=(\arabic*),leftmargin=2.2em]
		\item $H^0(\P_n)\simeq H^{0}(\P)$;
		\item $H^{-1}(\P_n)\simeq H^{-1}(\P)/R_n$;
		\item $\Hom_{\Kb(\proj\Lambda_n)}(\P_n,\P_n[-1])
		\simeq\Hom_\Lambda(H^{0}(\P),H^{-1}(\P)/R_n)$;
		\item The two-term silting complex $\P_n$ is tilting if and only if
		$H^{-1}(\P)/R_n\in\F$.
	\end{enumerate}
	In particular,
	\begin{equation}\label{eq:kernel-height}
		h_\Lambda(\P)=\min\{n\ge0:\Hom_\Lambda(H^{0}(\P),H^{-1}(\P)/R_n)=0\}
		=\min\{n\ge0:H^{-1}(\P)/R_n\in\F\}.
	\end{equation}
\end{theorem}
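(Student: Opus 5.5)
The whole argument rests on one identity, which I will establish first:
\[
d(P^{-1}I_n)=P^0I_n\quad\text{for every }n\ge0 .
\]
I prove it by induction on $n$. The case $n=0$ is trivial. Assume the identity holds for $n$. By Theorem~\ref{thm:hrs-base-change}, $\P_n$ is a two-term silting complex over $\Lambda_n$. By Corollary~\ref{cor:iterated-hrs-base-change}, its first HRS ideal is $I'=I_{n+1}/I_n$. Lemma~\ref{lem:kernel-first-identities}, applied to $\P_n$ over $\Lambda_n$, gives $d_n(P_n^{-1}I')=P_n^0I'$. Since $P_n^0I'=(P^0I_{n+1}+P^0I_n)/P^0I_n$, lifting to $P^0$ gives
\[
P^0I_{n+1}\subseteq d(P^{-1}I_{n+1})+P^0I_n=d(P^{-1}I_{n+1})+d(P^{-1}I_n)\subseteq d(P^{-1}I_{n+1}).
\]
The reverse inclusion holds by $\Lambda$-linearity of $d$. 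I expect this induction to be the main obstacle. The care needed is in confirming that Lemma~\ref{lem:kernel-first-identities} really applies to the reduced complex: it requires the silting property of $\P_n$ and the identification of $I'$ with $\mathfrak b_{\Lambda_n}(\P_n)$, not with the Buan--Zhou representative $\P_i$. Both are supplied by Theorem~\ref{thm:hrs-base-change} and Corollary~\ref{cor:iterated-hrs-base-change}, since the ideal depends only on the additive closure.

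With the identity, (1) and (2) are direct computations.
\begin{enumerate}
\item $H^0(\P_n)=P^0/(d(P^{-1})+P^0I_n)$. Since $P^0I_n=d(P^{-1}I_n)\subseteq d(P^{-1})$, this equals $H^0(\P)$.
\item $H^{-1}(\P_n)=\{u\in P^{-1}: d(u)\in P^0I_n\}/P^{-1}I_n$. If $d(u)\in P^0I_n$, then $d(u)=d(w)$ for some $w\in P^{-1}I_n$, so $u-w\in H^{-1}(\P)$. Hence the numerator is $H^{-1}(\P)+P^{-1}I_n$. The second isomorphism theorem then gives $H^{-1}(\P_n)\simeq H^{-1}(\P)/R_n$.
\end{enumerate}

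For (3), I apply Lemma~\ref{lem:hom-id} over $\Lambda_n$. This gives $\Hom(\P_n,\P_n[-1])\simeq\Hom_{\Lambda_n}(H^0(\P_n),H^{-1}(\P_n))$. Both modules are annihilated by $I_n$, so $\Hom_{\Lambda_n}$ coincides with $\Hom_\Lambda$ on them, and (1) and (2) give the stated form.

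For (4), a two-term silting complex has negative self-morphisms only in degree $-1$. So $\P_n$ is tilting exactly when the group in (3) vanishes. By the definition of $\F$ used in this section, this is exactly $H^{-1}(\P)/R_n\in\F$.

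Finally, for \eqref{eq:kernel-height}: by Corollary~\ref{cor:bz-ideal-increment}, $I_{n+1}=I_n$ if and only if $\P_n$ is tilting. Here "$\P_n$" means the Buan--Zhou representative, but by Corollary~\ref{cor:iterated-hrs-base-change} it has the same additive closure as $\P\otimes_\Lambda\Lambda_n$, and tilting depends only on the additive closure. Together with Theorem~\ref{thm:refined-height}, $h_\Lambda(\P)$ is therefore the least $n$ for which $\P\otimes_\Lambda\Lambda_n$ is tilting. Combining this with (3) and (4) yields both expressions in \eqref{eq:kernel-height}.
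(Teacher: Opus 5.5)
Your proposal is correct and follows the paper's proof. It uses the same induction for $d(P^{-1}I_n)=P^0I_n$, applying Lemma~\ref{lem:kernel-first-identities} to $\P\otimes_\Lambda\Lambda_n$ over $\Lambda_n$, and then Lemma~\ref{lem:hom-id} for (3) and (4); the only cosmetic difference is that the paper reads off (1) and (2) from the long exact cohomology sequence of $0\to\P I_n\to\P\to\P_n\to0$ (using $H^0(\P I_n)=0$ and $H^{-1}(\P I_n)=R_n$), whereas you compute the kernel and cokernel of $d_n$ directly.
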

\begin{proof}
	We first prove $d(P^{-1}I_n)= P^0I_n$ by induction on $n$.
	The case $n=0$ is clear.
	By Corollary~\ref{cor:iterated-hrs-base-change}, $I_{n+1}/I_n$ is the
	first HRS ideal of $\P_n$. Applying Lemma~\ref{lem:kernel-first-identities}
	over $\Lambda_n$ and then using the induction hypothesis gives
	\[
	 P^0I_{n+1}
	=d(P^{-1}I_{n+1})+ P^0I_n
	=d(P^{-1}I_{n+1}).
	\]
	
	The termwise short exact sequence of complexes
	\[
	0\longrightarrow\P I_n\longrightarrow\P
	\longrightarrow\P_n\longrightarrow0,
	\qquad \P I_n=[P^{-1}I_n\xrightarrow d P^0I_n],
	\]
	has $H^0(\P I_n)=0$ and $H^{-1}(\P I_n)=R_n$.
	Its cohomology sequence therefore gives the natural identifications
	in (1) and (2).
	Applying Lemma~\ref{lem:hom-id} to $\P_n$ proves (3), since
	$\Lambda_n$-linear and $\Lambda$-linear maps between these quotient
	modules coincide.  (4) follows from (3).
\end{proof}

\begin{remark}\label{rem:kernel-ext-obstruction}
	For $n\ge1$, every map $H^{0}(\P)\to H^{-1}(\P)$ has image in
	$t(H^{-1}(\P))\subseteq R_1\subseteq R_n$ by
	Lemma~\ref{lem:kernel-first-identities}. The connecting map for
	$0\to R_n\to H^{-1}(\P)\to H^{-1}(\P)/R_n\to0$ therefore yields an isomorphism
	\begin{equation}\label{eq:kernel-ext-description}
		\Hom_\Lambda(H^{0}(\P),H^{-1}(\P)/R_n)\xrightarrow[\delta_n]{\sim}
		\ker\bigl[\Ext^1_\Lambda(H^{0}(\P),R_n)\longrightarrow
		\Ext^1_\Lambda(H^{0}(\P),H^{-1}(\P))\bigr].
	\end{equation}
	Since $\delta_n$ is injective, no nonzero map on the left
	lifts to $H^{-1}(\P)$.
	By Theorem~\ref{thm:main-kernel-filtration}, these are precisely the
	new negative maps of $\P_n$; their matrix coefficients generate
	$I_{n+1}/I_n$ by Lemma~\ref{lem:matrix-hrs-ideal}.
\end{remark}

The filtration $\{R_n\}$ also bounds the number of nonzero filtration
layers and characterizes the terminal quotient.

\begin{proposition}\label{prop:main-universal-tilting}
	\label{cor:main-two-sided-kernel-bound}
	If $J$ is an ideal of $\Lambda$ and $\P\otimes_\Lambda\Lambda/J$ is tilting,
	then $I_\infty\subseteq J$ and $\Lambda/I_\infty$ is the largest quotient over which
	reduction makes $\P$ tilting.
	Moreover,
	\[
	h_\Lambda(\P)\le
	\min\{\len_\Lambda H^{-1}(\P),
	\len_{\Lambda^{\mathrm{op}}}\Hom_\Lambda(H^{0}(\P),\Lambda)\}.
	\]
\end{proposition}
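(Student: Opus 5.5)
The proof splits into the universal property of $I_\infty$ and the two length bounds.

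\emph{Universal property.} I will show by induction on $n$ that $I_n\subseteq J$.
\begin{enumerate}
\item The case $n=0$ is trivial.
\item Assume $I_n\subseteq J$. Then $\Lambda/J$ is a quotient of $\Lambda_n$, and
$\P\otimes_\Lambda\Lambda/J\simeq\P_n\otimes_{\Lambda_n}\Lambda/J$ by associativity.
\item Over $\Lambda_n$, Lemma~\ref{lem:matrix-hrs-ideal} says $I_{n+1}/I_n$ is generated by the entries of negative chain maps $L\colon P_n^0\to P_n^{-1}$ with $Ld_n=d_nL=0$.
\item Reducing such an $L$ modulo $J$ gives a negative chain map of $\P\otimes_\Lambda\Lambda/J$. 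This complex is tilting, so the reduced map vanishes in $\Kb(\proj\Lambda/J)$.
\item There are no homotopies in degree $-1$, so the reduced matrix itself is zero. Hence every entry of $L$ lies in $J/I_n$, which gives $I_{n+1}\subseteq J$.
\end{enumerate}
It follows that $I_\infty\subseteq J$. On the other side, $\P\otimes_\Lambda\Lambda/I_\infty=\P_h$ is tilting by Corollary~\ref{cor:bz-ideal-increment}, since $I_{h+1}=I_h$. So $\Lambda/I_\infty$ is the largest such quotient.

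\emph{Bound by $\len_\Lambda H^{-1}(\P)$.} I will show that the chain $R_0\subseteq R_1\subseteq\cdots\subseteq H^{-1}(\P)$ is strict while $n<h$.
\begin{enumerate}
\item Suppose $R_{n+1}=R_n$ for some $n<h$. By Theorem~\ref{thm:main-kernel-filtration}(2), $H^{-1}(\P_{n+1})=H^{-1}(\P_n)$.
\item By (3) of the same theorem, $\Hom(H^0(\P),H^{-1}(\P)/R_{n+1})=\Hom(H^0(\P),H^{-1}(\P)/R_n)\neq0$. But Lemma~\ref{lem:kernel-first-identities}, applied over $\Lambda_n$, says that every map $H^0(\P_n)\to H^{-1}(\P_n)$ has image inside $H^{-1}(\P_n)\cap P_n^{-1}(I_{n+1}/I_n)=R_{n+1}/R_n$, which is zero.
\item So all such maps vanish. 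This contradicts $n<h$ via \eqref{eq:kernel-height}.
\end{enumerate}
Hence there are $h$ strict inclusions among submodules of $H^{-1}(\P)$, and $h\le\len_\Lambda H^{-1}(\P)$.

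\emph{Bound by $\len_{\Lambda^{\mathrm{op}}}\Hom_\Lambda(H^0(\P),\Lambda)$.} Here I use the left modules $M_n=\Hom_\Lambda(H^0(\P),I_n)\subseteq\Hom_\Lambda(H^0(\P),\Lambda)$. These are left submodules because $I_n$ is a two-sided ideal.
\begin{enumerate}
\item I claim $M_{n+1}\supsetneq M_n$ for $n<h$.
\item Formula \eqref{eq:kernel-first-coefficients}, applied over $\Lambda_n$, writes $I_{n+1}/I_n$ as a sum of images of maps $H^0(\P)=H^0(\P_n)\to\Lambda_n$, composed through $P_n^{-1}$ and $\phi$.
\item If $n<h$, some such map is nonzero with image in $I_{n+1}/I_n$.
\item The step I expect to be the main obstacle is lifting this map to $H^0(\P)\to I_{n+1}$ that does not already land in $I_n$. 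A lift need not exist in general.
\item First I would try lifting at the level of $P^0$: the composite $\phi\iota f\pi\colon P^0\to\Lambda_n$ lifts because $P^0$ is projective. One would then need the lifted map to kill $d(P^{-1})$, or else replace it using $d(P^{-1}I_n)=P^0I_n$ from the proof of Theorem~\ref{thm:main-kernel-filtration}.
\item If this fails, I would instead dualize the kernel-filtration argument. I would work with the modules $\Hom_\Lambda(H^0(\P),\Lambda)$ modulo those maps landing in $I_n$, and use strictness of the filtration $I_n$ together with the generation formula \eqref{eq:kernel-first-coefficients} to produce a strict chain of left submodules of length $h$.
\end{enumerate}
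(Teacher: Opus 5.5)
\textbf{The third bound is not proved.} Your arguments for the universal property of $I_\infty$ and for the bound by $\len_\Lambda H^{-1}(\P)$ are sound and essentially the paper's. For the universal property, you reduce the generating matrices $L$ modulo $J$ and use that $\P\otimes_\Lambda\Lambda/J$ has no negative self-maps and no degree $-1$ homotopies. For the length bound, you get $R_n\subsetneq R_{n+1}$ from Lemma~\ref{lem:kernel-first-identities} over $\Lambda_n$. The gap is in the bound by $\len_{\Lambda^{\mathrm{op}}}\Hom_\Lambda(H^0(\P),\Lambda)$. You choose the right chain $M_n=\Hom_\Lambda(H^0(\P),I_n)$, but you never prove $M_n\subsetneq M_{n+1}$ for $n<h$. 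Your items (4)--(6) are only a search plan, and they suggest the needed lift might not exist.

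\textbf{The missing idea is that $I_n\in\T$ for every $n$.} In your item (2) you already note that \eqref{eq:kernel-first-coefficients} over $\Lambda_n$ writes $I_{n+1}/I_n$ as a sum of images of maps from $H^0(\P_n)\simeq H^0(\P)$. So $I_{n+1}/I_n\in\Fac H^0(\P)=\T$. Since $\T$ is closed under extensions, the sequences $0\to I_n\to I_{n+1}\to I_{n+1}/I_n\to0$ give $I_n\in\T=\Fac H^0(\P)$ for all $n$ by induction. Hence $I_{n+1}$ is the sum of the images $f(H^0(\P))$ over all $f\in\Hom_\Lambda(H^0(\P),I_{n+1})$. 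If $M_{n+1}=M_n$, all these images would lie in $I_n$, forcing $I_{n+1}\subseteq I_n$. This contradicts $I_n\subsetneq I_{n+1}$ for $n<h$, so no lifting is needed at all.

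\textbf{Your lifting route also works once $I_n\in\T$ is known.} The obstruction to lifting a nonzero map $H^0(\P)\to I_{n+1}/I_n$ lies in $\Ext^1_\Lambda(H^0(\P),I_n)$. For $X=I_n$, this group injects into $\operatorname{coker}\bigl(\Hom_\Lambda(P^0,X)\to\Hom_\Lambda(P^{-1},X)\bigr)=\Hom(\P,X[1])$. That group vanishes because $X\in\T(\P)$. Any lift of a nonzero map to $I_{n+1}/I_n$ then automatically does not land in $I_n$, which gives the strict inclusion.
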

\begin{proof}
	Put $h=h_\Lambda(\P)$.
	Theorem~\ref{thm:main-kernel-filtration} shows that $\P_h$ is tilting.
	Suppose $\P_J=\P\otimes_\Lambda\Lambda/J$ is tilting.
	We prove $I_n\subseteq J$ by induction. The case $n=0$ is clear.
	Since $I_n\subseteq J$, the quotient map to $\Lambda/J$ factors through
	$\Lambda_n$, and
	\[
	\P_n\otimes_{\Lambda_n}\Lambda/J
	\simeq\P\otimes_\Lambda\Lambda/J=\P_J.
	\]
	The ideal $I_{n+1}/I_n=\mathfrak b_{\Lambda_n}(\P_n)$ is generated by
	the composites in \eqref{eq: gen-silting}, taken over $\Lambda_n$.
	After tensoring with $\Lambda/J$, their middle terms belong to
	$\add\P_J$ and $\add\P_J[-1]$, respectively.
	Since $\P_J$ is tilting, the morphisms between these terms are zero.
	Thus every generating composite maps to zero under
	$\Lambda_n\twoheadrightarrow\Lambda/J$, whose kernel is $J/I_n$.
	It follows that $I_{n+1}/I_n\subseteq J/I_n$, and hence $I_{n+1}\subseteq J$.
	Induction gives $I_\infty\subseteq J$, proving the universal property.
	
	For $n<h$, apply Lemma~\ref{lem:kernel-first-identities} over
	$\Lambda_n$. Under the natural identifications of
	Theorem~\ref{thm:main-kernel-filtration}, it gives
	\[
	0\ne t(H^{-1}(\P)/R_n)\subseteq R_{n+1}/R_n.
	\]
	Thus $0=R_0\subsetneq R_1\subsetneq\cdots\subsetneq R_h\subseteq H^{-1}(\P)$,
	and $h\le\len_\Lambda H^{-1}(\P)$.
	
	Lemma~\ref{lem:kernel-first-identities}  gives $I_{n+1}/I_n\in\T_n=\T$.
	By extension closure, $I_n\in\T=\Fac H^{0}(\P)$ for all $n$.
	Since $I_{n+1}$ is generated by images of maps from $H^{0}(\P)$,
	$I_n\subsetneq I_{n+1}$ implies
	\[
	\Hom_\Lambda(H^{0}(\P),I_n)\subsetneq\Hom_\Lambda(H^{0}(\P),I_{n+1}).
	\]
	Indeed, equality would force all those images into $I_n$.
	For $0\le n<h$, these form a strict chain of left
	$\Lambda$-submodules of $\Hom_\Lambda(H^{0}(\P),\Lambda)$, proving
	$h\le\len_{\Lambda^{\mathrm{op}}}\Hom_\Lambda(H^{0}(\P),\Lambda)$.
\end{proof}

The following condition on  the second syzygy $\Omega^2_\Lambda H^{0}(\P)$ implies
$H^{-1}(\P)/R_1\in\F$.

\begin{proposition}\label{thm:main-syzygy-lifting}
	Let $N=\Omega^2_\Lambda H^{0}(\P)$ in a minimal projective resolution of $H^{0}(\P)$.
	If every composition factor of $N/t(N)$ belongs to $\F$, then
	$h_\Lambda(\P)\le1$.
\end{proposition}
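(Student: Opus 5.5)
By Theorem~\ref{thm:main-kernel-filtration}(4) and \eqref{eq:kernel-height}, it suffices to show $H^{-1}(\P)/R_1\in\F$, where $R_1=H^{-1}(\P)\cap P^{-1}I_1$. The plan is to present $H^{-1}(\P)/R_1$ as a quotient of a module all of whose composition factors lie in $\F$, and then use two facts.
\begin{itemize}
\item $\F$ is closed under extensions and contains the relevant simple modules. Hence a module all of whose composition factors lie in $\F$ belongs to $\F$, by induction on length.
\item The property ``all composition factors in $\F$'' passes to quotients.
\end{itemize}
This is needed because $\F$ itself is not closed under quotients. Note also that $X/t(X)\in\F$ holds for every $X$. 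So the hypothesis is really about composition factors, which is exactly what survives passage to a quotient.

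\emph{Step 1: identify $H^{-1}(\P)$ with $N$ up to a projective summand.} A two-term silting complex decomposes in $\Kb(\proj\Lambda)$ as
\[
\P\simeq\P_{\min}\oplus Q'[1],
\]
where $\P_{\min}=[P_1\to P_0]$ is a minimal projective presentation of $H^0(\P)$ and $Q'$ is projective. Contractible summands of the complex itself do not change $H^{-1}$. Since $H^{-1}(\P_{\min})=\ker(P_1\to P_0)=\Omega^2_\Lambda H^0(\P)=N$, we get $H^{-1}(\P)\simeq N\oplus Q'$. The isomorphism $\Hom_{\Kb}(\P,\P[-1])\simeq\Hom_\Lambda(H^0(\P),H^{-1}(\P))$ of Lemma~\ref{lem:hom-id} is compatible with this decomposition.

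\emph{Step 2: the torsion part lies in $R_1$.} Lemma~\ref{lem:kernel-first-identities} gives $t(H^{-1}(\P))\subseteq R_1$. Hence $H^{-1}(\P)/R_1$ is a quotient of
\[
H^{-1}(\P)/t(H^{-1}(\P))\simeq N/t(N)\oplus Q'/t(Q'),
\]
because the torsion radical commutes with finite direct sums. For the $N$-part, the hypothesis says that every composition factor of $N/t(N)$ lies in $\F$, so the same holds for the corresponding part of any quotient.

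\emph{Step 3: the summand $Q'$ (the main obstacle).} The silting condition gives
\[
\Hom_\Lambda(Q',X)=\Hom(Q'[1],X[1])=0\quad\text{for all }X\in\T,
\]
so no top of $Q'$ is a composition factor of a torsion module. I would rather not use $Q'$ as part of a module whose composition factors must all lie in $\F$. Instead I would argue directly that any $f:H^0(\P)\to H^{-1}(\P)/R_1$ vanishes after projecting to the image of $Q'/t(Q')$.

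For this I would first try the Ext description of Remark~\ref{rem:kernel-ext-obstruction}: $f$ corresponds to an extension class in $\Ext^1_\Lambda(H^0(\P),R_1)$ that dies in $\Ext^1_\Lambda(H^0(\P),H^{-1}(\P))$. On the $Q'$-component I would show that this class already dies in $\Ext^1_\Lambda(H^0(\P),R_1\cap Q')$. The tool is that $R_1\cap Q'\supseteq Q'I_1$, and $Q'I_1$ receives no nonzero maps compatible with the vanishing of $\Hom(Q',\T)$. If this fails, a fallback is to pass to the summand $\P_{\min}$: it has the same $\add$-generated first HRS ideal contribution on the $N$-part. Combining with Step~2, every map $H^0(\P)\to H^{-1}(\P)/R_1$ then has image with all composition factors in $\F$. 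Such an image lies in $\T$ and in $\F$, so it is zero. This gives $H^{-1}(\P)/R_1\in\F$ and $h_\Lambda(\P)\le1$.
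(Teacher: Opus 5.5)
Your reduction to $H^{-1}(\P)/R_1\in\F$, the splitting $H^{-1}(\P)\simeq N\oplus Q'$, and your treatment of the $N$-summand agree with the paper. That summand's image in $H^{-1}(\P)/R_1$ is a quotient of $N/t(N)$, so its composition factors lie in $\F$, so it lies in $\F$ by extension closure. You should add that $R_1=(N\cap P^{-1}I_1)\oplus Q'I_1$, which holds because $N$ and $Q'$ sit in complementary summands of $P^{-1}$. Without this, ``the $N$-part'' of the quotient is not defined.

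The genuine gap is Step~3: you never prove that the $Q'$-component $Q'/Q'I_1$ lies in $\F$. The Ext plan is circular. On that component the class already lives in $\Ext^1_\Lambda(H^0(\P),Q'I_1)$. By injectivity of $\delta_1$ (Remark~\ref{rem:kernel-ext-obstruction}), its vanishing there is equivalent to the vanishing of the $Q'$-component of $f$, which is exactly the claim. The property of $Q'I_1$ you invoke is never made precise. The fallback does not help either. $\P_{\min}$ is in general not silting, and Theorem~\ref{thm:main-kernel-filtration}(4) concerns all of $H^{-1}(\P)/R_1$, of which $Q'/Q'I_1$ is a summand.

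The missing idea is that $Q'I_1$ is torsion. Lemma~\ref{lem:kernel-first-identities} gives $I_1\in\T$. Since $Q'$ is a finite sum of modules $e\Lambda$, and $e\Lambda\cdot I_1=eI_1$ is a direct summand of $I_1$, one has $Q'I_1\in\add I_1\subseteq\T$, hence $Q'I_1\subseteq t(Q')$. The same lemma gives $t(Q')\subseteq Q'\cap P^{-1}I_1=Q'I_1$. Thus $Q'I_1=t(Q')$, and $Q'/Q'I_1=Q'/t(Q')\in\F$ outright.

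Your caution about composition factors was justified. Take linearly oriented $A_3$ and $\T=\add S_2$. There $Q'$ contains the projective of length three, $t(Q')=0$, and $S_2\in\T$ is a composition factor of $Q'/t(Q')$. So this summand must be handled by the identification $Q'I_1=t(Q')$, not by the hypothesis on $N$. With both summands in $\F$, Theorem~\ref{thm:main-kernel-filtration} gives $h_\Lambda(\P)\le1$, which is how the paper argues.
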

\begin{proof}
	Replace $\P$ by a minimal representative and write
	\[
	\P=[P^{-1}_{\mathrm{pr}}\xrightarrow{d_0} P^0]\oplus W[1],
	\]
	where the first summand is a minimal projective presentation of $H^{0}(\P)$
	and $W$ is projective; see \cite[proof of Proposition~3.6]{AIR}.
	Thus $H^{-1}(\P)=N\oplus W$.
	Put $I=I_1$ and $S=N\cap P^{-1}_{\mathrm{pr}}I$.
	Then
	\[
	R_1=S\oplus WI,
	\qquad H^{-1}(\P)/R_1\simeq(N/S)\oplus(W/WI).
	\]
	Lemma~\ref{lem:kernel-first-identities} gives
	$t(N)\subseteq S$ and $t(W)\subseteq WI$.
	Since $W$ is projective, $WI\in\add I\subseteq\T$; hence
	$WI\subseteq t(W)$ and therefore $WI=t(W)$.
	It follows that $W/WI\in\F$.
	
	The module $N/S$ is a quotient of $N/t(N)$, so all its composition
	factors belong to $\F$. Extension closure of $\F$
	gives $N/S\in\F$.
	Consequently $H^{-1}(\P)/R_1\in\F$, and
	Theorem~\ref{thm:main-kernel-filtration} yields $h_\Lambda(\P)\le1$.
\end{proof}
\begin{corollary}\label{cor:collapse}
With $N$ as in Proposition~\ref{thm:main-syzygy-lifting}, if $N$ is semisimple, then
$h_\Lambda(\P)\le1$. In particular, when
$\rad^2\Lambda=0$ or $\operatorname{pd}_\Lambda H^{0}(\P)\le1$, $N$ is semisimple.
\end{corollary}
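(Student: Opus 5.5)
The corollary has two parts. First, semisimplicity of $N$ implies the hypothesis of Proposition~\ref{thm:main-syzygy-lifting}. Second, each of the two special conditions makes $N$ semisimple.

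For the first part, suppose $N$ is semisimple. Then $t(N)$ is a direct summand of $N$, and $N/t(N)$ is a semisimple module lying in $\F$ (it is torsion-free as $fN$). Each composition factor of $N/t(N)$ is then isomorphic to a direct summand of $N/t(N)$. Since $\F$ is closed under submodules and summands, every such composition factor lies in $\F$. Proposition~\ref{thm:main-syzygy-lifting} then gives $h_\Lambda(\P)\le1$.

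Now suppose $\rad^2\Lambda=0$. In a minimal projective resolution $Q_1\to Q_0\to H^0(\P)\to0$, minimality gives $\Omega H^0(\P)\subseteq\rad Q_0$ and $N=\Omega^2H^0(\P)\subseteq\rad Q_1$. Since $(\rad Q_1)\rad\Lambda=Q_1\rad^2\Lambda=0$, the module $\rad Q_1$ is semisimple. Hence its submodule $N$ is semisimple.

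Finally, suppose $\operatorname{pd}_\Lambda H^0(\P)\le1$. Then the minimal resolution stops at $Q_1$, so $N=0$, which is trivially semisimple.

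The only point needing care is that composition factors of a semisimple torsion-free module are torsion-free. This holds because in a semisimple module every composition factor splits off as a summand, and $\F$ is closed under summands.
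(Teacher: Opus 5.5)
Your proposal is correct and follows the paper's proof essentially line for line. As in the paper, you show that the semisimple module $N/t(N)\in\F$ has its composition factors as direct summands, hence in $\F$, so Proposition~\ref{thm:main-syzygy-lifting} applies; for the special cases you use $N\subseteq\rad Q_1$ with $(\rad Q_1)\rad\Lambda=0$ when $\rad^2\Lambda=0$, and $N=0$ when the projective dimension is at most one, spelling out the minimality step slightly more explicitly than the paper does.
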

\begin{proof}
	If $N$ is semisimple, so is $N/t(N)\in\F$, and its composition
	factors belong to $\F$ as direct summands.
	If $\rad^2\Lambda=0$, then
	$N\subseteq\rad P^{-1}_{\mathrm{pr}}$ is semisimple.
	If $\operatorname{pd}_\Lambda  H^{0}(\P)\le1$, then $N=0$.
\end{proof}
\subsection{An example}
Put $h_{\mathrm{HRS}}(\Lambda)=\sup_{\P}h_\Lambda(\P)$, where
$\P$ ranges over basic two-term silting complexes over $\Lambda$.
Proposition~\ref{prop:hereditary-arbitrary-pair} gives
$h_{\mathrm{HRS}}(\Lambda)\le1$ for every hereditary algebra $\Lambda$.
The following example shows that this bound is not preserved by
derived equivalence.
\begin{exm}
Let $\Gamma$ be the path algebra of the oriented $E_6$ quiver in
Figure~\ref{fig:e6}, and set $T=\bigoplus_{i=1}^6T_i$ with the summands
marked in the Auslander--Reiten quiver in Figure~\ref{fig:are6}.
Write $R_i=e_i\Gamma$ and $S_i^\Gamma=R_i/\rad R_i$. Then
\[
T_1=S_6^\Gamma,\quad T_2=Y,\quad T_3=R_1,\quad
T_4=S_1^\Gamma,\quad T_5=R_6,\quad T_6=R_4,
\]
where $Y$ has space $k$ at every vertex and identity arrow maps.
Since $\Gamma$ is hereditary, $\operatorname{pd}_\Gamma T\le1$,
and a direct calculation gives $\Ext^1_\Gamma(T,T)=0$.
Together with the short exact sequence
\[0\to \Gamma \to T_3^3\oplus T_5^3\oplus T_6\to T_1\oplus T_2\oplus T_4\to 0,\]
this proves that the $\Gamma$-module $T$ is tilting.

\begin{figure}[htbp]
	\centering
	\begin{minipage}[c]{0.48\textwidth}
		\centering
		\begin{tikzpicture}[
			x=1.8cm,
			y=1.5cm,
			>={Stealth[length=2.2mm]},
			thick,
			every node/.style={font=\Large, inner sep=3pt},
			scale=0.8
			]
			\node (v1) at (0,0) {$1$};
			\node (v2) at (1,0) {$2$};
			\node (v3) at (2,0) {$3$};
			\node (v4) at (2,1) {$4$};
			\node (v5) at (3,0) {$5$};
			\node (v6) at (4,0) {$6$};
			
			\draw[->] (v1) -- (v2);
			\draw[->] (v2) -- (v3);
			\draw[->] (v3) -- (v4);
			\draw[->] (v5) -- (v3);
			\draw[->] (v6) -- (v5);
		\end{tikzpicture}
	\end{minipage}
	\hfill
	\begin{minipage}[c]{0.48\textwidth}
		\centering
		\begin{tikzpicture}[
			>=Stealth,
			thick,
			every node/.style={font=\Large},
			scale=0.6,
			xscale=-1
			]
			\node (1) at (0, 2) {$1$};
			\node (4) at (0,-2) {$4$};
			\node (2) at (3, 0) {$2$};
			\node (3) at (6, 2) {$3$};
			\node (5) at (6,-2) {$5$};
			\node (6) at (9, 0) {$6$};
			
			\draw[->] (1) -- node[above] {$\alpha$} (2);
			\draw[->] (4) -- node[below] {$\beta$} (2);
			\draw[->] (2) -- node[above] {$\gamma$} (3);
			\draw[->] (2) -- node[below] {$\delta$} (5);
			\draw[->] (3) -- node[above] {$\rho$} (6);
			\draw[->] (5) -- node[below] {$\sigma$} (6);
		\end{tikzpicture}
	\end{minipage}
	\caption{The quivers of $\Gamma$ and its tilted algebra $\Lambda$}\label{fig:e6}
\end{figure}

Let $\Lambda=\End_\Gamma(T)$. It is given by the right-hand quiver
in Figure~\ref{fig:e6}, with relations
\[
\alpha\gamma=\beta\delta=0,\qquad
	\gamma\rho=\delta\sigma=:c.
\]
The classical results in
\cite[Chapter~III]{Happel} yield
$\mathrm D^b(\mod \Lambda)\simeq\mathrm D^b(\mod \Gamma)$.
We claim there is a two-term silting complex $\mathbf P\in \Kb(\proj \Lambda)$ with 
\[
I_1=(\gamma),\qquad I_2=(\gamma,\delta)=I_3,
\] and thus $h_{\Lambda}(\mathbf P)=2$.

Write $P_i=e_i\Lambda$ and $S_i$ the corresponding simple $\Lambda$-modules. Take $\mathbf P$ to be the direct sum of the following summands:
\[
\begin{gathered}
	\mathbf P_{\mathrm{I}}=[P_2\xrightarrow{\alpha}P_1],\quad
		\mathbf P_{\mathrm{II}}=[P_2\xrightarrow{\beta}P_4],\quad
	\mathbf P_{\mathrm{III}}=[P_{2}\xrightarrow{(\alpha,-\beta)^t}P_{1}\oplus P_{4}],\\
	\mathbf P_{\mathrm{IV}}=P_{3}[0],\quad
		\mathbf P_{\mathrm{V}}=[P_{6}\xrightarrow{(\rho,-\sigma)^t}P_{3}\oplus P_{5}],\quad
		\mathbf P_{\mathrm{VI}}=[P_{6}\xrightarrow{\rho}P_{3}].
\end{gathered}
\]

By
\cite[Lemma~2.5, Lemma~2.2(2)]{HKM} with $n=1$, we have
\[
\Hom(\mathbf P,\mathbf P[1])\simeq\Hom(\mathbf P,H^0(\mathbf P)[1])\simeq
\operatorname{coker}\bigl(
\Hom_\Lambda(P^0,H^0(\mathbf P))\xrightarrow{(-)\circ d_{\mathbf P}}
\Hom_\Lambda(P^{-1},H^0(\mathbf P))
\bigr).
\]
It can be checked that the map $\Hom_\Lambda(P^0,H^0(\mathbf P))\xrightarrow{(-)\circ d_{\mathbf P}}
\Hom_\Lambda(P^{-1},H^0(\mathbf P))$ on each source summand of $\mathbf P$ and target
summand of $H^0(\mathbf P)$ is surjective. Therefore
$\Hom(\mathbf P,\mathbf P[1])=0$ and $\mathbf P$ is presilting.

Moreover, there is
a distinguished triangle
\[
\Lambda\longrightarrow\mathbf P_{\mathrm{III}}^{3}\oplus \mathbf P_{\mathrm{IV}}^{2}\oplus \mathbf P_{\mathrm{V}}
\longrightarrow \mathbf P_{\mathrm{I}}^{2}\oplus \mathbf P_{\mathrm{II}}^{2}\oplus \mathbf P_{\mathrm{VI}}^{2}
\longrightarrow \Lambda[1].
\]
It follows that 
$\Lambda \in\thick(\mathbf P)$. Consequently
$\thick(\mathbf P)=\Kb(\proj \Lambda)$, and $\mathbf P$ is silting.

By Lemma~\ref{lem:hom-id}, the negative chain maps correspond to
$\Hom_\Lambda(H^0(\P),H^{-1}(\P))$.
Their matrix entries generate $I_1$ by Lemma~\ref{lem:matrix-hrs-ideal}.
The three nonzero kernels are
\[
H^{-1}(\P_{\mathrm{I}})\cong P_3,\qquad
H^{-1}(\P_{\mathrm{II}})\cong P_5,\qquad
H^{-1}(\P_{\mathrm{III}})\cong S_6.
\]
The remaining three summands have zero kernel.
Comparing $H^0(\P)$ with these three modules gives
$\Hom_\Lambda(H^0(\mathbf P),H^{-1}(\mathbf P))\simeq k,$
generated by the map $H^0(\P_{\mathrm{IV}})\to H^{-1}(\P_{\mathrm{I}})$
whose corresponding negative chain map has coefficient $\gamma$.
Lemma~\ref{lem:matrix-hrs-ideal} gives
\[
I_1=(\gamma)=\langle\gamma,\beta\gamma,\gamma\rho\rangle_k.
\]

For $R_1=H^{-1}(\P)\cap P^{-1}I_1$, a direct calculation gives
\[R_1=\gamma \Lambda\oplus k(\gamma\rho)\oplus k(\gamma\rho).\]
Hence
\[H^{-1}(\mathbf P)/R_1\simeq\delta \Lambda/k(\gamma\rho)\simeq S_5.\]
By Theorem~\ref{thm:main-kernel-filtration},
$\Hom(\mathbf P_1,\mathbf P_1[-1])\simeq k$.
Its generator is represented by
\[
L=(0,\bar\delta): \P_{\mathrm{V}}^0\longrightarrow\P_{\mathrm{II}}^{-1}
\quad\text{over }\Lambda/I_1.
\]
Indeed, $\beta\delta=0$ and
$L d_{\mathbf P_{\mathrm{V}}}=-\overline{\delta\sigma}=0$
over $\Lambda/I_1$.
Since every negative chain map $\P\to\P[-1]$ over $\Lambda$ is a scalar
multiple of the map with coefficient $\gamma$, all such maps reduce to zero
modulo $I_1$.
The nonzero map $L$ therefore does not lift to a negative chain map over
$\Lambda$.
Lemma~\ref{lem:matrix-hrs-ideal} now gives
$I_2/I_1=\langle\bar\delta,\overline{\alpha\delta}\rangle_k$.
Taking its inverse image under $\Lambda\twoheadrightarrow\Lambda/I_1$
gives $I_2=(\gamma,\delta)$.

Since $e_2I_2=\langle\gamma,\delta,c\rangle_k$, each nonzero kernel lies
in the $I_2$-multiple of its corresponding $P_2$-summand.
Hence $R_2=H^{-1}(\mathbf P)$.
Theorem~\ref{thm:main-kernel-filtration} yields
$I_3=I_2$ and $h_\Lambda(\mathbf P)=2$.
Therefore
\[
0=I_0\subsetneq I_1\subsetneq I_2=I_3,
\quad h_{\mathrm{HRS}}(\Lambda)\ge h_\Lambda(\mathbf P)=2.
\]
\begin{figure}
	\centering
	\begin{tikzpicture}[
		x=1.25cm,
		y=0.95cm,
		vertex/.style={circle,fill=black,inner sep=1.4pt},
		boxed/.style={rectangle,draw=black,fill=white,minimum size=6pt,
			inner sep=0pt,line width=0.9pt},
		edge/.style={-{Stealth[length=3.2pt,width=2.6pt]},thick},
		rotate=90,xscale=0.7,scale=0.5
		]
		\node[boxed]  (A)  at (0,1.3) {$T_6$};
		\node[vertex] (A0) at (0,0) {};
		
		\foreach \row/\yy in {A/-1.3,C/-3.9,E/-6.5,G/-9.1} {
			\node[vertex] (\row1) at (-1.8,\yy) {};
			\node[vertex] (\row2) at (0,\yy) {};
			\node[vertex] (\row3) at (1.8,\yy) {};
		}
		\node[boxed]  (B1) at (-3.6,-2.6) {$T_5$};
		\node[vertex] (B2) at (0,-2.6) {};
		\node[boxed]  (B3) at (3.6,-2.6) {$T_3$};
		\foreach \row/\yy in {D/-5.2,F/-7.8} {
			\node[vertex] (\row1) at (-3.6,\yy) {};
			\node[vertex] (\row2) at (0,\yy) {};
			\node[vertex] (\row3) at (3.6,\yy) {};
		}
		
		\node[vertex] (H1) at (-3.6,-10.4) {};
		\node[vertex] (H2) at (0,-10.4) {};
		\node[vertex] (H3) at (3.6,-10.4) {};
		\node[vertex] (I1) at (-1.8,-11.7) {};
		\node[boxed] (I2) at (0,-11.7) {$T_2$};
		\node[vertex] (I3) at (1.8,-11.7) {};
		\node[vertex] (J1) at (-3.6,-13.0) {};
		\node[vertex] (J0) at (0,-13.0) {};
		\node[vertex] (J2) at (3.6,-13.0) {};
		\node[vertex] (K1) at (-1.8,-14.3) {};
		\node[vertex] (K2) at (1.8,-14.3) {};
		\node[boxed]  (L1) at (-3.6,-15.6) {$T_4$};
		\node[boxed]  (L2) at (3.6,-15.6) {$T_1$};
		
		\draw[edge] (A)--(A0);
		\foreach \j in {1,2,3} {
			\draw[edge] (A0)--(A\j);
		}
		\foreach \upper/\lower in {A/B,C/D,E/F,G/H} {
			\foreach \u/\v in {1/1,1/2,2/2,3/2,3/3} {
				\draw[edge] (\upper\u)--(\lower\v);
			}
		}
		\foreach \upper/\lower in {B/C,D/E,F/G,H/I} {
			\foreach \u/\v in {1/1,2/1,2/2,2/3,3/3} {
				\draw[edge] (\upper\u)--(\lower\v);
			}
		}
		\foreach \u/\v in {1/1,1/0,2/0,3/0,3/2} {
			\draw[edge] (I\u)--(J\v);
		}
		\foreach \u/\v in {1/1,0/1,0/2,2/2} {
			\draw[edge] (J\u)--(K\v);
		}
		\foreach \j in {1,2} {
			\draw[edge] (K\j)--(L\j);
		}
	\end{tikzpicture}
	\caption{The AR quiver of $\Gamma$ and its tilting module $T$}\label{fig:are6}
\end{figure}

\end{exm}

\section{Arbitrary HRS height and delayed stabilization}
\label{sec:arbitrary-height}
We construct algebras with two simple modules and Loewy length three
whose HRS heights are arbitrarily large. The construction associates a
square-zero extension and a two-term silting complex to a nilpotent
endomorphism $\phi:V\to V$. Passing to the first HRS quotient replaces
$V$ by $V/\ker\phi$, so the kernel filtration of $\phi$ determines the
successive quotients.
\subsection{A square-zero extension and its first HRS ideal}
\label{subsec:nilpotent-operator-families}

Let $Q$ be the Kronecker quiver
\[
\begin{tikzpicture}[>=stealth,baseline=(current bounding box.center),transform shape,scale=0.7]
	\node (A) at (0,0) {0};
	\node (B) at (2,0) {1};
	\draw[->,thick] (A) to[out=30,in=150] node[sloped,above] {$\alpha$} (B);
	\draw[->,thick] (A) to[out=-30,in=-150] node[sloped,below] {$\beta$} (B);
\end{tikzpicture}
\]
and put $B=kQ$.  We use the convention $\alpha,\beta\in e_0Be_1$.

Let $V$ be a nonzero finite-dimensional $k$-vector space and let
$\phi:V\to V$ be a nilpotent linear transformation.  Define a $B$--$B$-bimodule $\mathcal M_{\phi}$ by
\[
e_1\mathcal M_{\phi}e_0=V^{\oplus2},\qquad
e_0\mathcal M_{\phi}e_0=V,\qquad
e_1\mathcal M_{\phi}e_1=V,\qquad
e_0\mathcal M_{\phi}e_1=0.
\]
The only nonzero arrow actions start from $e_1\mathcal M_{\phi}e_0=V^{\oplus2}$.
For $(\V,\w)\in V^{\oplus2}$ set
\begin{equation}\label{eq:nilpotent-bimodule-actions}
	\begin{aligned}
		L_\alpha(\V,\w):=\alpha(\V,\w)&=\w,
		&L_\beta(\V,\w):=\beta(\V,\w)&=(1+\phi)\V+\phi\w,\\
		R_\alpha(\V,\w):=(\V,\w)\alpha&=\phi(\V+\w),
		&R_\beta(\V,\w):=(\V,\w)\beta&=\phi\V+\w.
	\end{aligned}
\end{equation}
All arrow actions on the remaining corners are zero.  Thus the four corner spaces
and the left and right arrow actions may be displayed as
\[
\begin{tikzcd}[column sep=5.2em,row sep=3.7em]
	V^{\oplus2}
	\arrow[r,shift left=.8ex,"R_\alpha"]
	\arrow[r,shift right=.8ex,swap,"R_\beta"]
	\arrow[d,shift left=.8ex,"L_\alpha"]
	\arrow[d,shift right=.8ex,swap,"L_\beta"]
	& V
	\arrow[d,shift left=.8ex,"0"]
	\arrow[d,shift right=.8ex,swap,"0"] \\
	V
	\arrow[r,shift left=.8ex,"0"]
	\arrow[r,shift right=.8ex,swap,"0"]
	& 0 .
\end{tikzcd}
\]

Let $\Lambda_{\phi}:=B\ltimes\mathcal M_{\phi}$, with multiplication
\[
(b,m)(b',m')=(bb',bm'+mb').
\]
Thus $\mathcal M_{\phi}$ is a square-zero ideal of $\Lambda_{\phi}$.
Let $E_i:=e_i\Lambda_{\phi}$ denote the indecomposable projectives.
We define the two-term complex
$\P_{\phi}:=X_{\phi}\oplus Y_{\phi},$
where
\[
X_{\phi}=E_1\xrightarrow{\binom{\alpha}{\beta}}E_0^{\oplus2},
\quad
Y_{\phi}=E_1^{\oplus2}
\xrightarrow{\left(\begin{smallmatrix}\alpha&0\\ \beta&\alpha\\0&\beta\end{smallmatrix}\right)}
E_0^{\oplus3}.
\]
Write $\P_{\phi}=[P^{-1}\xrightarrow d P^0]$, where
$P^{-1}=E_1^{\oplus3}$ and $P^0=E_0^{\oplus5}$.
Its differential is
$d=\alpha A_0+\beta C_0,$
where
\[
A_0=
\begin{pmatrix}
	1&0&0\\
	0&0&0\\
	0&1&0\\
	0&0&1\\
	0&0&0
\end{pmatrix},
\qquad
C_0=
\begin{pmatrix}
	0&0&0\\
	1&0&0\\
	0&0&0\\
	0&1&0\\
	0&0&1
\end{pmatrix}.
\]
If $U\subseteq V$ is $\phi$-invariant, we write $\mathcal M_{\phi}(U)$ for the
four-corner subspace obtained from the same construction by replacing
$(V,\phi)$ with $(U,\phi|_U)$.

Since $\mathcal M_{\phi}$ is nilpotent,
$\rad\Lambda_{\phi}=\rad B\oplus\mathcal M_{\phi}$, as vector spaces, and
$\Lambda_{\phi}/\rad\Lambda_{\phi}\simeq k\times k$.
Consequently, $\Lambda_{\phi}$ is a finite-dimensional
split basic algebra with two simple modules. Moreover,
$(\rad B)^2=\mathcal M_{\phi}^2=0$, and the arrow actions give
\[
(\rad\Lambda_{\phi})^2
=e_0\mathcal M_{\phi}e_0\oplus e_1\mathcal M_{\phi}e_1.
\]
Indeed, $\alpha(0,v)=v$ and $(0,v)\beta=v$ in the two respective corners.
Since the arrows and $\mathcal M_{\phi}$ annihilate these corners on both
sides, $(\rad\Lambda_{\phi})^3=0$. Its square is nonzero for $V\ne0$,
so the Loewy length is three.  When $V=0$, we use the same notation
with $\mathcal M_{\phi}=0$, $\Lambda_{\phi}=B$, and $\P_{\phi}$ equal to the displayed
Kronecker complex; this includes the terminal quotient of the construction.

For the following computations, set $\Lambda=\Lambda_{\phi}$,
$M=\mathcal M_{\phi}$ and $\P=\P_{\phi}$.
Since $e_0\Lambda e_1=k\alpha\oplus k\beta$,
every map $P^{-1}\to P^0$ is uniquely of the
form
$\alpha U+\beta V$ with $U,V\in M_{5\times3}(k)$,
and therefore the map
\begin{equation}\label{eq:theta-definition}
	\theta:
	M_{5\times3}(k)\oplus M_{5\times3}(k)
	\xrightarrow{\sim}
	\Hom_{\Lambda}(P^{-1},P^0),
	\;
	(U,V)\mapsto \alpha U+\beta V,
\end{equation}
is a $k$-linear isomorphism.

Since $\Lambda_{\phi}=B\oplus M$ as vector spaces, we have
$e_i\Lambda_{\phi}e_i=ke_i\oplus e_iMe_i$ for $i=0,1$.
For $R\in\End_{\Lambda}(P^0)\simeq M_5(e_0\Lambda e_0)$ and
$S\in\End_{\Lambda}(P^{-1})\simeq M_3(e_1\Lambda e_1)$, write uniquely
\[
R=R_0+R_M\qquad\text{and}\qquad S=S_0+S_M.
\]
Here $R_0\in M_5(k)$ and $S_0\in M_3(k)$, while the entries of
$R_M$ and $S_M$ lie in $e_0Me_0$ and $e_1Me_1$, respectively.
The quotient map $\Lambda\to B$ induces the projections
$\End_{\Lambda}(P^j)\twoheadrightarrow\End_B(\bar P^j)$ for $j=-1,0$,
where $\bar P^j=P^j\otimes_\Lambda B$.
Write $\bar X=X_{\phi}\otimes_{\Lambda}B$ and
$\bar Y=Y_{\phi}\otimes_{\Lambda}B$.
Note that $\End_B(\bar P^0)\cong M_5(k)$ and
$\End_B(\bar P^{-1})\cong M_3(k)$.
We define the projection  
\begin{equation}\label{eq:pi-definition}
	\pi:
	\End_{\Lambda}(P^0)\oplus\End_{\Lambda}(P^{-1})
	\twoheadrightarrow M_5(k)\oplus M_3(k),
	\quad
	(R,S)\mapsto (R_0,S_0).
\end{equation}

\begin{proposition}
	\label{prop:nilpotent-silting-realization}
	The complex $\P_{\phi}$ is a two-term silting complex over $\Lambda_{\phi}$.  For its
	induced torsion pair
	$(\T(\P_{\phi}),\F(\P_{\phi})),$
	we have
	\[
	J(\P_{\phi})=\mathcal M_{\phi}.
	\]
\end{proposition}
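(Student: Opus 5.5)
The plan is to reduce almost everything to the Kronecker algebra $B$. This works because $\mathcal M_\phi$ has the corner vanishing $e_0\mathcal M_\phi e_1=0$, and that vanishing kills every cross term between $\mathcal M_\phi$ and the differential.

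\emph{Presilting.} A map $P^{-1}\to P^0$ has entries in $e_0\Lambda e_1=k\alpha\oplus k\beta$. So by \eqref{eq:theta-definition}, $\Hom_\Lambda(P^{-1},P^0)$ consists of the maps $\alpha U+\beta V$. For $(R,S)$ write $R=R_0+R_M$ and $S=S_0+S_M$ as before \eqref{eq:pi-definition}. The terms $R_Md$ and $dS_M$ have entries in $e_0\mathcal M_\phi e_0\cdot e_0\Lambda e_1$ and $e_0\Lambda e_1\cdot e_1\mathcal M_\phi e_1$. Both spaces lie in $e_0\mathcal M_\phi e_1=0$. Hence $Rd+dS=R_0d+dS_0$, and the map $(R,S)\mapsto Rd+dS$ factors through $\pi$.

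Therefore $\Hom(\P,\P[1])=0$ is equivalent to the same surjectivity for $\bar\P=\bar X\oplus\bar Y$ over $B$. Over $B$ the complexes $\bar X$ and $\bar Y$ are the minimal projective presentations of the preprojective Kronecker modules of dimension vectors $(2,1)$ and $(3,2)$; here I identify $\bar X$ with the simple-projective tilt $\tau^{-1}$ of $E_1$ and $\bar Y$ with the next one. These two modules form a preprojective tilting module, so $\bar\P$ is presilting; I would confirm this by the rank count $30$ in the $\theta$-coordinates.

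\emph{Silting.} The complexes $X_\phi$ and $Y_\phi$ are indecomposable and nonisomorphic. Indeed, reducing modulo the nilpotent ideal $\mathcal M_\phi\subseteq\rad\Lambda_\phi$ preserves indecomposability of two-term presilting complexes with radical differentials, and $\bar X\not\simeq\bar Y$. Thus $\P_\phi$ is a basic presilting complex with $2=|\Lambda_\phi|$ summands, and it is silting by \cite{AIR}.

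\emph{The containment $J(\P_\phi)\subseteq\mathcal M_\phi$.} For a $B$-module $X$, tensor--restriction adjunction gives $\Hom_\Lambda(\P,X[j])\simeq\Hom_B(\bar\P,X[j])$. Hence $\T(\bar\P)\subseteq\T(\P)$ and $\F(\bar\P)\subseteq\F(\P)$. Now $H^0(\bar\P)$ is a tilting $B$-module, hence faithful over $B$. So any $a\in\Ann_\Lambda\T(\P)$ has $B$-component annihilating a faithful $B$-module, which forces $a\in\mathcal M_\phi$.

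\emph{The containment $\mathcal M_\phi\subseteq J(\P_\phi)$.} Since the classes are $\Fac H^0(\P)$ and $\Sub H^{-1}(\nu\P)$, it suffices to show
\[
H^0(\P)\mathcal M_\phi=0\qquad\text{and}\qquad H^{-1}(\nu\P)\mathcal M_\phi=0.
\]
For the first, $P^0\mathcal M_\phi=(e_0\mathcal M_\phi e_0)^{5}=V^5$, so we must show $V^5\subseteq d(P^{-1})$. The image of $x\in(e_1\mathcal M_\phi e_0)^3=(V^{\oplus2})^3$ is $A_0L_\alpha(x)+C_0L_\beta(x)$. The task is to solve this linear system for an arbitrary target in $V^5$. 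I would use $L_\alpha(\V,\w)=\w$ to fix the second coordinates, and then use that $1+\phi$ is invertible in $L_\beta$ to solve for the first coordinates, row by row along the staircase pattern of $A_0,C_0$.

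Dually, $H^{-1}(\nu\P)$ is the kernel of $\nu d:\nu P^{-1}\to\nu P^0$. Its $\mathcal M_\phi$-multiples lie in $D(e_1\mathcal M_\phi e_1)$-type corners. Showing they vanish amounts to injectivity of the transposed system built from the right actions $R_\alpha(\V,\w)=\phi(\V+\w)$ and $R_\beta(\V,\w)=\phi\V+\w$. I expect this dual computation to be the main obstacle. Here $\phi$ is nilpotent rather than invertible, so one must argue by induction along $\ker\phi\subseteq\ker\phi^2\subseteq\cdots$, using $R_\beta$ to eliminate $\w$ and $R_\alpha$ to propagate through powers of $\phi$. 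If this direct route is messy, I would instead verify $\Hom_\Lambda(\P,X)=0$ for the relevant modules $X$ via Lemma~\ref{lem:square-zero-annihilator}(1).
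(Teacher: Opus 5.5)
Your plan largely follows the paper's proof. It uses the same factorization of $(R,S)\mapsto Rd+dS$ through $\pi$ (via $e_0\mathcal M_\phi e_1=0$), the same surjectivity $(e_1\mathcal M_\phi e_0)^3\to V^5$ from $L_\alpha$ and $(1+\phi)^{-1}$ to get $H^0(\P)\mathcal M_\phi=0$, and faithfulness over $B$ for $J(\P_\phi)\subseteq\mathcal M_\phi$.

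Your two variations are sound and somewhat more conceptual. The map $\Psi$ is exactly the map computing $\Hom_{\Kb(\proj B)}(\bar\P,\bar\P[1])$. Recognizing $\bar X,\bar Y$ as projective resolutions of $\tau^{-1}(e_1B)$ and $\tau^{-1}(e_0B)$ therefore gives $\Hom(\P,\P[1])\simeq\Ext^1_B(\tau^{-1}B,\tau^{-1}B)=0$ without the rank count. It also explains the paper's $\dim\ker\Psi=4=\dim\End_B(\tau^{-1}B)$. Likewise, ``tilting implies faithful'' together with $\T(\bar\P)\subseteq\T(\P)$ replaces the paper's hand check that $H^0(\bar X)$ is faithful. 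One small slip: the dimension vectors at vertices $0,1$ are $(2,3)$ and $(3,4)$. The pairs $(2,1)$ and $(3,2)$ are the multiplicities of $E_0,E_1$ in the terms.

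Two steps need repair.

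\emph{Indecomposability.} Your claim is stated in the wrong direction, and as stated it is false. Over the path algebra of type $A_2$, let $P'$ be the simple projective, included as the radical of the other indecomposable projective $P$. Then $[P'\hookrightarrow P]$ is an indecomposable two-term presilting complex with radical differential (it resolves a simple). Modulo the nilpotent ideal $\rad\Lambda$ its differential vanishes and it splits. What you need is the converse, which holds for any ideal $I\subseteq\rad\Lambda$. A decomposition $Z\simeq Z_1\oplus Z_2$ into nonzero minimal complexes reduces to minimal complexes with nonzero terms, and these are nonzero in $\Kb(\proj\Lambda/I)$. So indecomposability of $\bar X,\bar Y$ lifts to $X_\phi,Y_\phi$. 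The paper instead shows that their endomorphism rings are local, using the square-zero kernel of reduction.

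\emph{The inclusion $\mathcal M_\phi\subseteq\Ann_\Lambda\F(\P_\phi)$.} This is never established in your proposal. You reformulate it, predict an induction along $\ker\phi^m$, and offer a fallback through Lemma~\ref{lem:square-zero-annihilator}(1). That lemma cannot help: it only yields $J^2=0$ and $I_n\subseteq J$. The obstacle you anticipate does not exist. With $\P^*=\Hom_\Lambda(\P,\Lambda)$ one has $H^{-1}(\nu\P)\simeq DH^1(\P^*)$. Also $\mathcal M_\phi(\Lambda e_1)^3=(e_1\mathcal M_\phi e_1)^3$, again because $e_0\mathcal M_\phi e_1=0$. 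So it suffices that the restricted dual differential $(e_1\mathcal M_\phi e_0)^5\to(e_1\mathcal M_\phi e_1)^3$ be surjective; its $k$-dual is your injectivity statement. On the two blocks this map is $(x,y)\mapsto x\alpha+y\beta$ and $(x,y,z)\mapsto(x\alpha+y\beta,\,y\alpha+z\beta)$. Since $R_\beta(0,\w)=\w$ and $R_\alpha(0,\w)=\phi\w$, the elements $(0,(0,a))$ and $(0,(0,a),(0,b-\phi a))$ map to $a$ and $(a,b)$. Nilpotency of $\phi$ enters only on the $\T$-side, through $(1+\phi)^{-1}$.
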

\begin{proof}
	We divide the proof into two steps.
	
	\smallskip
	\noindent\emph{Step 1: $\P$ is a two-term silting complex.}
	Since $\P$ is two-term, to prove that it is presilting it is enough to show
	$\Hom(\P,\P[1])=0$.
	Consider the morphism
	\[
	\delta:\End_{\Lambda}(P^0)\oplus\End_{\Lambda}(P^{-1})
	\longrightarrow\Hom_{\Lambda}(P^{-1},P^0),\qquad
	(R,S)\mapsto Rd+dS.
	\]
	There is an isomorphism $\Hom(\P,\P[1]) \cong \Coker \delta$.

	Define a $k$-linear map 
	\begin{equation}\label{eq:Psi-definition}
		\Psi:
		M_5(k)\oplus M_3(k)
		\longrightarrow M_{5\times3}(k)^{\oplus2},
		\quad
		(R_0,S_0)\mapsto
		\bigl(R_0A_0+A_0S_0,\,
		R_0C_0+C_0S_0\bigr).
	\end{equation}
	Since $R_Md=dS_M=0$, the map $\delta$ depends only on $R_0,S_0$,
	and hence $\delta=\theta\circ\Psi\circ\pi$.
	
	Since $\pi$ is surjective and $\theta$ is an isomorphism, $\theta$ induces an isomorphism
	$\Coker\Psi
	\xrightarrow{\sim}
	\Coker\delta.$
	Therefore  $\Coker \delta=0 $
	is equivalent to the surjectivity of $\Psi$. 
	The equation
	$\Psi(R_0,S_0)=0$ holds if and only if
	\[
	R_0A_0+A_0S_0=0,
	\qquad
	R_0C_0+C_0S_0=0.
	\]
	After replacing $S_0$ by $-S'_0$, the solutions are precisely
	\[
	R_0=\begin{pmatrix}
	 a&0&0&0&0\\
	 0&a&0&0&0\\
	 b&0&t&0&0\\
	 c&b&0&t&0\\
	 0&c&0&0&t
	\end{pmatrix},\qquad
	S'_0=\begin{pmatrix}
	 a&0&0\\ b&t&0\\ c&0&t
	\end{pmatrix}
	\quad(a,b,c,t\in k).
	\]
	Thus $\dim_k\ker\Psi=4$.
	Since the domain and target have dimensions $34$ and $30$, respectively,
	$\Psi$ is surjective. Therefore $\Hom(\P,\P[1])=0$.

	The diagonal blocks corresponding to $\bar X$ and $\bar Y$ give
	$\End_{\Kb(\proj B)}(\bar X)=\End_{\Kb(\proj B)}(\bar Y)=k$.
	Since $e_1Be_0=0$, homotopies over $B$ vanish, and the different
	projective multiplicities show that $\bar X\not\simeq\bar Y$.
	For $Z=X_\phi,Y_\phi$, reduction modulo $M$ induces a homomorphism
	\[
	\End_{\Kb(\proj\Lambda)}(Z)
	\longrightarrow
	\End_{\Kb(\proj B)}(Z\otimes_\Lambda B)=k.
	\]
	Since homotopies over $B$ vanish, every element of the kernel has
	a representative with entries in $M$. Since $M^2=0$, the kernel is
	square-zero. Scalar identities lift, so
	$\End_{\Kb(\proj\Lambda)}(Z)$ is local.
	Hence $X_\phi,Y_\phi$ are nonisomorphic indecomposables and $\P$ is basic.
	Since $\Lambda$ has exactly two simple modules, we have
	$|\P|=|\Lambda|=2$.  By 
	\cite[Proposition~3.3(b)]{AIR}, a basic two-term presilting complex with
	$|\P|=|\Lambda|$ is silting.  Therefore $\P$ is a basic two-term silting complex.
	
	\smallskip
	\noindent\emph{Step 2: computation of the common annihilator.}
	We first prove that $M$ annihilates $\T(\P)=\Fac H^0(\P),$
	and 
	$\F(\P)=\Sub H^{-1}(\nu \P)$.
	
	For the differential
	$d:P^{-1}\longrightarrow P^0$
	of $\P$, we restrict it to a homomorphism $d_M:P^{-1}M\longrightarrow P^0M.$
	For $X_\phi$, the corner decomposition identifies its restricted differential with
	\[
	V^{\oplus2}\oplus V\xrightarrow{(F_\phi,0)}V^{\oplus2},
	\qquad F_\phi(v,w)=\bigl(w,(1+\phi)v+\phi w\bigr).
	\]
	The last summand $V=e_1Me_1$ maps to zero since $e_0Me_1=0$.
	Since $1+\phi$ is invertible, $F_\phi$ is an isomorphism, with
	\[
	F_\phi^{-1}(a,b)=\bigl((1+\phi)^{-1}(b-\phi a),a\bigr).
	\]
	Hence $H^0(X_\phi)M=0$.
	
	For
	the summand 
	$Y_\phi$ of $\P$,
	the nonzero restriction of $d_M\colon P^{-1}M\to P^0M$ is
	\[
	(e_1Me_0)^{\oplus2}\longrightarrow(e_0Me_0)^{\oplus3},
	\qquad (x,y)\longmapsto(L_\alpha x,L_\beta x+L_\alpha y,L_\beta y).
	\]
	Every $(a,b,c)$ is the image of
	$\bigl(F_\phi^{-1}(a,b),F_\phi^{-1}(0,c)\bigr)$.
	Thus the map is surjective, and
	$H^0(Y_\phi)M=0$
	and 
	$H^0(\P)M=0.$
	
	We show that
	$H^{-1}(\nu\P)M=0.$
	Let
	$\P^\ast:=\Hom_{\Lambda}(\P,\Lambda).$
	Since $\nu\P=D\P^\ast,$
	one has
	$H^{-1}(\nu\P)
	\simeq
	D\,H^1(\P^\ast).$
	The nonzero parts of the restricted differentials for $X_\phi^*$
	and $Y_\phi^*$ are
	\[
	(x,y)\longmapsto x\alpha+y\beta,
	\qquad
	(x,y,z)\longmapsto(x\alpha+y\beta,y\alpha+z\beta).
	\]
	They send $(0,(0,a))$ to $a$ and
	$(0,(0,a),(0,b-\phi a))$ to $(a,b)$, respectively.
	Thus both maps are surjective, and the left $\Lambda$-module
	$H^1(\P^*)$ is annihilated by $M$. 
	Thus we obtain
	\[
	M\subseteq
	\Ann_{\Lambda}(\T(\P))\cap\Ann_{\Lambda}(\F(\P))
	=J(\P).
	\]
	
	Conversely, let $\rho:\Lambda\twoheadrightarrow B=\Lambda/M$
	be the quotient map.  Since $M$ annihilates $H^0(X_{\phi})$, this module is
	naturally a $B$-module and identifies with $H^0(\bar X)$.
	Let $x,y$ denote the images of $(e_0,0)$ and $(0,e_0)$ in
	$H^0(\bar X)=\Coker[e_1B\xrightarrow{\binom{\alpha}{\beta}}(e_0B)^2]$.
	They generate its vertex-$0$ component. At vertex $1$, the generators
	are $x\alpha,x\beta,y\alpha,y\beta$, with the sole relation
	$x\alpha+y\beta=0$, since the differential sends $e_1$ to $(\alpha,\beta)$. If $a_0e_0+a_1e_1+b\alpha+c\beta$ annihilates it,
	its action at vertex $1$ gives $a_1=0$. Its action on $x$ gives $a_0x+b x\alpha+c x\beta=0$.
	Since $x$ lies at vertex $0$ and $x\alpha,x\beta$ at vertex $1$,
	the two components vanish separately. Since $x\ne0$ and
	$x\alpha,x\beta$ are linearly independent, $a_0=b=c=0$.
	Thus $H^0(\bar X)$ is faithful over $B$.
	Since $J(\P)$ annihilates $H^0(X_{\phi})\in\T(\P)$, its
	image $\rho(J(\P))$ annihilates the faithful $B$-module
	$H^0(\bar X)$.  Hence $\rho(J(\P))=0$, and therefore
	$J(\P)\subseteq\ker\rho=M.$
	Combining the two inclusions gives $J(\P)=M$, as required.
\end{proof}

\begin{proposition}
	\label{prop:nilpotent-first-defect}
	For the pair $(\Lambda_{\phi},\P_{\phi})$, we have
	\begin{equation}\label{eq:nilpotent-negative-hom}
		\Hom_{\Kb(\proj \Lambda_{\phi})}(\P_{\phi},\P_{\phi}[-1])
		\simeq(\ker \phi)^{2}.
	\end{equation}
	Moreover, its first HRS ideal is
	$I_1=\mathcal M_{\phi}(\ker \phi).$
\end{proposition}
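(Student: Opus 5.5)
To compute the negative self-maps, I would use the description \eqref{eq:negative-self-hom-kernel}: $\Hom(\P,\P[-1])\simeq\ker\Theta_{\mathbf d}$. That is, I would find all maps $L:P^0=E_0^{\oplus5}\to P^{-1}=E_1^{\oplus3}$ with $Ld=dL=0$. Since $e_1\Lambda e_0=e_1Me_0=V^{\oplus2}$, every such $L$ is a $3\times5$ matrix with entries $\ell_{ji}\in V^{\oplus2}$. Its compositions with $d=\alpha A_0+\beta C_0$ are computed by the arrow actions \eqref{eq:nilpotent-bimodule-actions}. The entries of $dL$ lie in $e_0Me_0=V$ and are given by the left actions $L_\alpha,L_\beta$. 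The entries of $Ld$ lie in $e_1Me_1=V$ and are given by the right actions $R_\alpha,R_\beta$. I would first note that the components of $L$ between the summands $X_\phi,Y_\phi$ decouple block by block. The conditions then become linear equations in the vector entries, of the form $L_\alpha(\ell)+\ldots=0$ and $R_\alpha(\ell)+\ldots=0$.

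Next I would solve these equations. The left condition $dL=0$ is treated exactly as in Step 2 of Proposition~\ref{prop:nilpotent-silting-realization}. There the map $F_\phi(v,w)=(w,(1+\phi)v+\phi w)$ is invertible. For a column of $L$ (one summand of $P^0$) the condition reads, for the $X_\phi$-row, $F_\phi(\ell)=0$. For the $Y_\phi$-rows it reads $L_\alpha x=0$, $L_\beta x+L_\alpha y=0$, $L_\beta y=0$. Both systems force $\ell=0$, except where the composite involves no arrow from that row. So the condition $dL=0$ kills every entry except those rows that $d$ does not hit.

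What survives must then satisfy $Ld=0$, which uses $R_\alpha(\V,\w)=\phi(\V+\w)$ and $R_\beta(\V,\w)=\phi\V+\w$. I expect these to reduce to conditions such as $\w=-\phi\V$ and $\phi(\V+\w)=0$, hence $\V-\phi\V\in\ker\phi$. Using that $1-\phi$ is invertible and commutes with $\phi$, this gives $\V\in\ker\phi$ and $\w=0$, or a similar parametrization. Counting free parameters should leave exactly two vector parameters in $\ker\phi$, giving \eqref{eq:nilpotent-negative-hom}. A cleaner route to the same count is Lemma~\ref{lem:hom-id}: compute $H^{-1}(X_\phi)$, $H^{-1}(Y_\phi)$ and $H^0$ of both, then count $\Hom_\Lambda(H^0(\P),H^{-1}(\P))$. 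Either way, this bookkeeping is the main obstacle: it has to produce precisely $(\ker\phi)^2$, and I would organize it corner by corner.

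For the ideal, I would apply Lemma~\ref{lem:matrix-hrs-ideal}: $I_1$ is the two-sided ideal generated by the entries of all $L\in\ker\Theta_{\mathbf d}$. These entries are explicit vectors of $e_1Me_0$ built from $\ker\phi$. I would then compute the bimodule they generate using the four actions restricted to $\ker\phi$. On $\ker\phi$ these simplify: $R_\alpha=0$, $R_\beta(\V,\w)=\w$, $L_\alpha(\V,\w)=\w$, $L_\beta(\V,\w)=\V$. So from the generators one reaches all of $(\ker\phi)^{\oplus2}$ in the corner $e_1Me_0$, and also all of $\ker\phi$ in the corners $e_0Me_0$ and $e_1Me_1$; $\ker\phi$ is $\phi$-invariant, as needed for the notation. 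This is exactly $\mathcal M_\phi(\ker\phi)$. The reverse inclusion holds because $\mathcal M_\phi(\ker\phi)$ is a subbimodule containing the generators.
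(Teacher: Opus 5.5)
Your plan is the paper's: solve $L\mathbf d=\mathbf dL=0$ via \eqref{eq:negative-self-hom-kernel}, then apply Lemma~\ref{lem:matrix-hrs-ideal}. However, the one concrete claim you make in the linear algebra is false, and the rest is left as an expectation.

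\textbf{The condition $dL=0$.} For a fixed column of $L$, the $Y_\phi$-rows give $L_\alpha x=0$, $L_\beta x+L_\alpha y=0$, $L_\beta y=0$ with $x,y\in V^{\oplus2}$. That is $4\dim V$ unknowns against $3\dim V$ equations, so it cannot force $x=y=0$. Its solutions are exactly $x=(\w,0)$, $y=(\phi\w,-(1+\phi)\w)$ with $\w\in V$ free. Only the $X_\phi$-row is killed, because $F_\phi$ is invertible. Every column of $\mathbf d$ contains both an $\alpha$ and a $\beta$. So your statement that $dL=0$ ``kills every entry except those rows that $d$ does not hit'' would give $L=0$ and $\Hom(\P,\P[-1])=0$, the opposite of what you want.

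\textbf{The condition $Ld=0$.} It has to be imposed on the surviving columns $\kappa_\phi(\w_1),\ldots,\kappa_\phi(\w_5)$, where $\kappa_\phi(\w)=(0,(\w,0),(\phi\w,-(1+\phi)\w))^{\mathsf t}$. For the pairs $(\uu,\uu')=(\w_1,\w_2),(\w_3,\w_4),(\w_4,\w_5)$ it reads $\phi(\uu+\uu')=0$ and $-\phi\uu+(-1-\phi+\phi^2)\uu'=0$, whence $(1-\phi^2)\uu'=0$. The operator that must be inverted is $1-\phi^2$, not the $1-\phi$ of your guessed equations. The outcome is $\w_2=\w_4=\w_5=0$ and $\w_1,\w_3\in\ker\phi$. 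This is where $(\ker\phi)^2$ actually comes from: one copy in $\Hom(X_\phi,Y_\phi[-1])$ and one in $\Hom(Y_\phi,Y_\phi[-1])$.

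\textbf{The ideal.} The gap carries over here. Since $M^2=0$, $e_1Be_1=ke_1$ and $e_0Be_0=ke_0$, the corner $e_1(-)e_0$ of the ideal generated by the entries is just their $k$-span. Reaching all of $(\ker\phi)^{\oplus2}$ there therefore requires the entries themselves to include both $(\w,0)$ and $(0,-\w)$. The latter are the third-row entries $(\phi\w,-(1+\phi)\w)$ with $\w\in\ker\phi$, which is precisely the row your $dL=0$ analysis discards. They are also the only source of the corner $e_1\mathcal M_\phi(\ker\phi)e_1$, via $(0,\w)\beta=\w$.

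With the parametrization you anticipate (entries $(\V,0)$, $\V\in\ker\phi$), the generated ideal would have corners $\ker\phi\oplus0$, $\ker\phi$ and $0$ in positions $e_1(-)e_0$, $e_0(-)e_0$ and $e_1(-)e_1$. That is strictly smaller than $\mathcal M_\phi(\ker\phi)$. So your assertion that the generators reach all of $(\ker\phi)^{\oplus2}$ is not supported by your computation; it needs the explicit form of $\ker\Theta_{\mathbf d}$ above.

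Your reverse inclusion is fine once the entries are known to lie in $(\ker\phi)^{\oplus2}$. Switching to Lemma~\ref{lem:hom-id} would give the dimension count. You would still have to translate the maps back into matrix entries $L=\iota f\pi$ to compute the ideal.
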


\begin{proof}
	We use the matrix description of Lemma~\ref{lem:matrix-hrs-ideal}.  Since
	$P_\phi^{-1}=E_1^3$ and $P_\phi^0=E_0^5$,
	a negative map
	$L:P_\phi^0\longrightarrow P_\phi^{-1}$
	is a $3\times5$ matrix whose entries lie in
	\[
	e_1\Lambda_\phi e_0=e_1\mathcal M_\phi e_0=V^{2}.
	\]
	By \eqref{eq:matrix-defect-kernel} and
	Lemma~\ref{lem:matrix-hrs-ideal}, it is enough to compute the kernel of
	$\Theta_d$ and the ideal generated by its matrix entries.

	Solving $dL=0$ allows us to write the $j$-th column of $L$ as
	\[
	\kappa_\phi(\w_j)=\bigl(0,(\w_j,0),(\phi \w_j,-(1+\phi)\w_j)\bigr)^{\mathsf t}.
	\]
	For $\uu\in V$ the right actions give
	
	\[
	\kappa_\phi(\uu)\alpha=
	\begin{pmatrix}0\\ \phi \uu\\ -\phi \uu\end{pmatrix},
	\qquad
	\kappa_\phi(\uu)\beta=
	\begin{pmatrix}0\\ \phi \uu\\ (-1-\phi+\phi^2)\uu\end{pmatrix}.
	\]
	Thus
	\[
	\kappa_\phi(\uu)\alpha+\kappa_\phi(\uu')\beta=0
	\iff
	\phi(\uu+\uu')=0,\quad
	-\phi \uu+(-1-\phi+\phi^2)\uu'=0.
	\]
	Eliminating $\phi \uu$ gives $(1-\phi^2)\uu'=0$.  Since $\phi$ is
	nilpotent, $1-\phi^2$ is invertible; hence $\uu'=0$ and
	$\uu\in\ker\phi$.  Applying this observation to the three equations in
	$Ld=0$ yields
	$\w_2=\w_4=\w_5=0$ and $\phi\w_1=\phi\w_3=0$.
	Hence
	\[
	\ker\Theta_d
	=
	\left\{
	\begin{pmatrix}
		0&0&0&0&0\\
		(\w,0)&0&(\V,0)&0&0\\
		(0,-\w)&0&(0,-\V)&0&0
	\end{pmatrix}
	\ \middle|\ \w,\V\in\ker \phi
	\right\}
	\simeq(\ker \phi)^{2}.
	\]

	It remains to determine the coefficient ideal $I_1$ generated by $\ker\Theta_d$.  Put
	$K=\ker\phi$.  The entries of the above matrices generate
	$e_1\mathcal M_\phi(K)e_0=K^{2}.$
	Since $\alpha(0,w)=w$ and $(0,w)\beta=w$ for $w\in K$,
	these entries also generate the two diagonal corners
	\[
	e_0\mathcal M_\phi(K)e_0=K,
	\qquad
	e_1\mathcal M_\phi(K)e_1=K,
	\]
	while $e_0\mathcal M_\phi(K)e_1=0$. Thus this coefficient ideal contains $\mathcal M_{\phi}(K)$.
	Conversely, no larger subspace can be generated by the two-sided actions, since
	$\phi(K)=0$, $\mathcal M_{\phi}^2=0$, and all arrow actions on $e_i\mathcal M_{\phi}e_i$ vanish.  Thus the two-sided ideal generated by the
	matrix entries is exactly
	$\mathcal M_\phi(K).$
	By Lemma~\ref{lem:matrix-hrs-ideal}, we have 
	$I_1=\mathcal M_\phi(\ker\phi).$
\end{proof}

\subsection{Kernel filtrations and HRS height}
\label{subsec:nilpotent-kernel-filtration}

The preceding calculation determines the first HRS ideal.  To iterate it, we
record the self-similarity of the construction under invariant quotients.

\begin{lemma}
	\label{lem:nilpotent-quotient-compatibility}
	Let $U\subseteq V$ be a $\phi$-invariant subspace and let $\bar \phi$ be the induced map on
	$V/U$.  Then $\mathcal M_{\phi}(U)$ is a two-sided ideal of $\Lambda_{\phi}$ and
	\begin{equation}\label{eq:nilpotent-self-similar-quotient}
		\Lambda_{\phi}/\mathcal M_{\phi}(U)\simeq\Lambda_{\bar \phi}.
	\end{equation}
	Under this identification,
	$
	\P_{\phi}\otimes_{\Lambda_{\phi}}
	\Lambda_{\bar\phi}
	\simeq\P_{\bar \phi}.$
	Let $q_U:\Lambda_\phi\twoheadrightarrow\Lambda_{\bar\phi}$ be the
	quotient map under this identification. For every $\phi$-invariant
	subspace $W\supseteq U$, one has
	\[
	q_U(\mathcal M_\phi(W))=\mathcal M_{\bar\phi}(W/U),\qquad
	q_U^{-1}(\mathcal M_{\bar\phi}(W/U))=\mathcal M_\phi(W).
	\]
\end{lemma}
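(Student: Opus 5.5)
The plan is to work directly with the corner decomposition of $\mathcal M_\phi$ and the explicit arrow actions \eqref{eq:nilpotent-bimodule-actions}. Write $\mathcal M_\phi(U)$ as the subspace of $\mathcal M_\phi$ with corners $U^{\oplus2}\subseteq e_1\mathcal M_\phi e_0$, $U\subseteq e_0\mathcal M_\phi e_0$, $U\subseteq e_1\mathcal M_\phi e_1$ and $0$ in $e_0\mathcal M_\phi e_1$.

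\emph{Step 1: $\mathcal M_\phi(U)$ is a two-sided ideal.} Since $\Lambda_\phi=B\oplus\mathcal M_\phi$ and $\mathcal M_\phi^2=0$, it suffices to check that $\mathcal M_\phi(U)$ is a $B$--$B$-subbimodule of $\mathcal M_\phi$. Idempotents preserve corners. The only nonzero arrow actions are $L_\alpha,L_\beta,R_\alpha,R_\beta$ on $V^{\oplus2}$, and these are built from $1$ and $\phi$. Because $U$ is $\phi$-invariant, they send $U^{\oplus2}$ into $U$.

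\emph{Step 2: the quotient algebra.} I will check that the quotient bimodule $\mathcal M_\phi/\mathcal M_\phi(U)$ has corners $(V/U)^{\oplus2}$, $V/U$, $V/U$, $0$. Its actions are obtained by reducing the formulas \eqref{eq:nilpotent-bimodule-actions} modulo $U$, which gives exactly the formulas for $\bar\phi$. Hence $\mathcal M_\phi/\mathcal M_\phi(U)\simeq\mathcal M_{\bar\phi}$ as $B$-bimodules, naturally in the evident projection $V\to V/U$. For a square-zero extension $B\ltimes M$ and a subbimodule $N$, one has $(B\ltimes M)/N\simeq B\ltimes(M/N)$, with multiplication $(b,m)(b',m')=(bb',bm'+mb')$ visibly compatible. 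This gives \eqref{eq:nilpotent-self-similar-quotient}, including the degenerate case $U=V$, where $\Lambda_{\bar\phi}=B$.

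\emph{Step 3: reduction of the complex.} The identification $q_U$ sends $e_i\mapsto e_i$, $\alpha\mapsto\alpha$ and $\beta\mapsto\beta$. Tensoring $E_i=e_i\Lambda_\phi$ with $\Lambda_{\bar\phi}$ gives $e_i\Lambda_{\bar\phi}$. The differentials of $X_\phi$ and $Y_\phi$ have entries in $\{0,\alpha,\beta\}$, namely $d=\alpha A_0+\beta C_0$, so by the termwise description of $\P\otimes_\Lambda\Lambda/J$ they reduce to the same matrices over $\Lambda_{\bar\phi}$. This is $\P_{\bar\phi}$.

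\emph{Step 4: image and preimage of $\mathcal M_\phi(W)$.} Let $W\supseteq U$ be $\phi$-invariant. Then $W/U$ is $\bar\phi$-invariant, and cornerwise the image of $W$ in $V/U$ is $W/U$. This gives $q_U(\mathcal M_\phi(W))=\mathcal M_{\bar\phi}(W/U)$. For the preimage, first note $q_U^{-1}(\mathcal M_{\bar\phi})=\mathcal M_\phi$, since $q_U$ is the identity on the $B$-component. Next, inside $\mathcal M_\phi$ the preimage of $\mathcal M_{\bar\phi}(W/U)$ is computed corner by corner as the preimage of $W/U$ under $V\to V/U$. Because $W\supseteq U$, that preimage is $W$.

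The main obstacle is purely bookkeeping: making the identification in Step 2 canonical enough that Steps 3 and 4 refer to the same map $q_U$. I would fix it once as the map $(b,m)\mapsto(b,\bar m)$ and use it throughout.
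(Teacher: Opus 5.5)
Your proposal is correct and follows essentially the same route as the paper: you check cornerwise that $\mathcal M_\phi(U)$ is a subbimodule, identify the quotient with $\mathcal M_{\bar\phi}$ using $\pi\phi=\bar\phi\pi$, observe that the entries $\alpha,\beta$ of the differentials are unchanged under $q_U$, and compute the image and preimage corner by corner. Your preliminary remark that $q_U^{-1}(\mathcal M_{\bar\phi})=\mathcal M_\phi$ spells out a step that the paper leaves implicit in its ``corner by corner'' argument.
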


\begin{proof}
	Since the arrow actions in \eqref{eq:nilpotent-bimodule-actions} preserve $U$,
	$\mathcal M_{\phi}(U)$ is a $B$--$B$-subbimodule of $\mathcal M_{\phi}$
	and hence a two-sided ideal of $\Lambda_{\phi}$.
	Put $\overline{\mathcal M}=\mathcal M_\phi/\mathcal M_\phi(U)$. Its corners are
	\[
	e_1\overline{\mathcal M}e_0\simeq(V/U)^{\oplus2},\qquad
	e_i\overline{\mathcal M}e_i\simeq V/U\ (i=0,1),\qquad
	e_0\overline{\mathcal M}e_1=0.
	\]
	Let $\pi:V\to V/U$ be the canonical projection. Since
	$\pi\phi=\bar\phi\pi$, these identifications respect the arrow actions,
	so $\overline{\mathcal M}\simeq\mathcal M_{\bar\phi}$ as $B$--$B$-bimodules.
	It follows that
	\[
	\Lambda_\phi/\mathcal M_{\phi}(U)
	\simeq
	B\ltimes(\mathcal M_{\phi}/\mathcal M_{\phi}(U))
	\simeq
	B\ltimes\mathcal M_{\bar\phi}
	=
	\Lambda_{\bar\phi}.
	\]
	
	Since $E_i=e_i\Lambda_\phi$, one has
	$E_i\otimes_{\Lambda_\phi}	\Lambda_{\bar\phi}
	\simeq e_i	\Lambda_{\bar\phi}.$
	Moreover, the quotient map restricts to the identity on
	$B\subseteq\Lambda_\phi$, so the elements $\alpha$ and $\beta$
	remain unchanged.  Therefore
	\[
	X_\phi\otimes_{\Lambda_\phi}	\Lambda_{\bar\phi}
	\simeq
	[	e_1	\Lambda_{\bar\phi}
	\xrightarrow{\binom{\alpha}{\beta}}
	(e_0	\Lambda_{\bar\phi})^{\oplus2}]
	=
	X_{\bar\phi},
	\]
	and similarly 
	$Y_\phi\otimes_{\Lambda_\phi}\Lambda_{\bar \phi}
	\simeq Y_{\bar\phi}.$
	Thus	$\P_\phi\otimes_{\Lambda_\phi}
	\Lambda_{\bar\phi}
	\simeq
	\P_{\bar\phi}.$
	The map $q_U$ is the identity on $B$ and the canonical quotient
	map on each copy of $V$. Since $W\supseteq U$, the image and
	inverse-image formulas follow corner by corner from
	$\pi(W)=W/U$ and $\pi^{-1}(W/U)=W$.
\end{proof}

For the rest of this subsection, let $V\ne0$ and put
\[
K_m:=\ker \phi^m,\quad
\nu(\phi):=\min\{r\ge1\mid \phi^r=0\}.
\]
The kernels form the filtration
\[0=K_0\subseteq K_1\subseteq\cdots\subseteq K_{\nu(\phi)}=V.\]
\begin{theorem}
	\label{thm:nilpotent-operator-defect}
	The even HRS filtration associated with $\P_{\phi}$ is
	\begin{equation}\label{eq:nilpotent-defect-filtration}
		I_m=\mathcal M_{\phi}(K_m)
		\quad(m\ge0).
	\end{equation}
	Consequently,
	$h_{\Lambda_{\phi}}(\T(\P_{\phi}))=\nu(\phi).$
\end{theorem}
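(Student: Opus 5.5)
We argue by induction on $m$, using the self-similarity of Lemma~\ref{lem:nilpotent-quotient-compatibility} to reduce each step to the first-ideal computation of Proposition~\ref{prop:nilpotent-first-defect}. The case $m=0$ is trivial: $K_0=0$ and $\mathcal M_\phi(0)=0=I_0$. Assume $I_m=\mathcal M_\phi(K_m)$. Since $K_m$ is $\phi$-invariant, Lemma~\ref{lem:nilpotent-quotient-compatibility} with $U=K_m$ identifies
\[
\Lambda_m=\Lambda_\phi/I_m\simeq\Lambda_{\bar\phi},\qquad
\P_m=\P_\phi\otimes_{\Lambda_\phi}\Lambda_m\simeq\P_{\bar\phi},
\]
where $\bar\phi$ is the nilpotent map induced on $V/K_m$.

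By Corollary~\ref{cor:iterated-hrs-base-change}, $I_{m+1}/I_m$ is the first HRS ideal of $\P\otimes_\Lambda\Lambda_m$ over $\Lambda_m$. Apply Proposition~\ref{prop:nilpotent-first-defect} to $(\Lambda_{\bar\phi},\P_{\bar\phi})$. If $V/K_m=0$, use the convention $\Lambda_{\bar\phi}=B$; the Kronecker complex has zero first ideal, for instance by the same matrix computation with $V=0$. In either case we obtain
\[
I_{m+1}/I_m=\mathcal M_{\bar\phi}(\ker\bar\phi).
\]

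Next identify $\ker\bar\phi$ with $K_{m+1}/K_m$. A vector $v+K_m$ lies in $\ker\bar\phi$ exactly when $\phi v\in K_m$, that is, when $\phi^{m+1}v=0$. The inverse-image formula of Lemma~\ref{lem:nilpotent-quotient-compatibility} with $W=K_{m+1}\supseteq U=K_m$ then gives
\[
I_{m+1}=q_{K_m}^{-1}\bigl(\mathcal M_{\bar\phi}(K_{m+1}/K_m)\bigr)=\mathcal M_\phi(K_{m+1}).
\]
This also uses the fact, from Corollary~\ref{cor:bz-ideal-increment}, that $I_{m+1}$ is the inverse image of the first ideal of the quotient. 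This completes the induction.

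For the height, $\mathcal M_\phi(U)$ determines $U$, since $U$ is its $e_0\mathcal M e_0$ corner. Hence $I_{m+1}=I_m$ if and only if $K_{m+1}=K_m$. For a nilpotent operator the kernel chain is strictly increasing until it reaches $V$, and this happens exactly at $m=\nu(\phi)$. Therefore the least $m$ with $I_{m+1}=I_m$ is $\nu(\phi)$.

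The main obstacle is making sure the identifications are compatible. The isomorphism $\P_m\simeq\P_{\bar\phi}$ must hold as genuine complexes over the identified algebras, so that Proposition~\ref{prop:nilpotent-first-defect} applies verbatim. The first ideal of $\P_{\bar\phi}$ computed inside $\Lambda_{\bar\phi}$ must correspond to $I_{m+1}/I_m$ under $q_{K_m}$. Finally, the degenerate terminal case $V/K_m=0$ must be checked to have zero first ideal. Both compatibilities are supplied by Lemma~\ref{lem:nilpotent-quotient-compatibility} together with the add-invariance in Corollary~\ref{cor:iterated-hrs-base-change}.
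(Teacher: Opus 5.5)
Your proposal is correct and follows the paper's proof step for step: the same induction through Lemma~\ref{lem:nilpotent-quotient-compatibility}, Corollary~\ref{cor:iterated-hrs-base-change}, Proposition~\ref{prop:nilpotent-first-defect} and the inverse-image formula, with the height read off from the kernel chain $K_m$. You also treat the terminal case $V/K_m=0$ explicitly and note that $U\mapsto\mathcal M_\phi(U)$ is injective; the paper leaves both points implicit.
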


\begin{proof}
	We prove by induction that
	$I_m=\mathcal M_{\phi}(K_m)$ for every $m\ge0$.
	The case $m=0$ follows from $I_0=K_0=0$.
	Assume $I_m=\mathcal M_{\phi}(K_m)$ and let
	$\bar\phi_m$ be the endomorphism induced by $\phi$ on $V/K_m$, namely
	\[
	\bar\phi_m(v+K_m)=\phi(v)+K_m.
	\]
	Its kernel satisfies
	\[
	v+K_m\in\ker\bar\phi_m
	\iff\phi(v)\in K_m
	\iff\phi^{m+1}(v)=0,
	\]
	so $\ker\bar\phi_m=K_{m+1}/K_m$.
	By Lemma~\ref{lem:nilpotent-quotient-compatibility},
	we identify
	\[
	\Lambda_m:=\Lambda_{\phi}/I_m\simeq\Lambda_{\bar \phi_m}.
	\]
	Under this identification, Corollary~\ref{cor:iterated-hrs-base-change} gives
	$I_{m+1}/I_m
	=
	\mathfrak b_{\Lambda_m}
	\bigl(\P_\phi\otimes_{\Lambda_\phi}\Lambda_m\bigr).$
	Hence Proposition~\ref{prop:nilpotent-first-defect} yields
	$I_{m+1}/I_m
	=
	\mathcal M_{\bar\phi_m}(\ker\bar\phi_m).$
	
	Taking inverse images under $q_{K_m}:\Lambda_\phi\twoheadrightarrow\Lambda_m$
	and applying the inverse-image formula in
	Lemma~\ref{lem:nilpotent-quotient-compatibility}, we obtain
	$I_{m+1}=\mathcal M_{\phi}(K_{m+1})$, completing the induction.
	
	For $m<\nu(\phi)$, the nilpotent operator $\bar\phi_m$ acts on the
	nonzero space $V/K_m$, so $\ker\bar\phi_m\ne0$ and
	$K_m\subsetneq K_{m+1}$. For $m\ge\nu(\phi)$, one has $K_m=V$.
	The kernel filtration, and hence the HRS ideal filtration,
	first stabilizes at $\nu(\phi)$, so
	$h_{\Lambda_{\phi}}(\T(\P_{\phi}))=\nu(\phi)$.
\end{proof}

For the following dimension formulas, choose a basic representative $\P_m$
of the transported silting complex over $\Lambda_m=\Lambda_\phi/I_m$.
It has the same additive closure as the complex in the Buan--Zhou tower.
Set
\[
c_m:=\dim_k(K_{m+1}/K_m).
\]

\begin{corollary}
	\label{cor:nilpotent-profile-dimensions}
	For every $m\ge0$,
	\begin{equation}\label{eq:nilpotent-profile-dimensions}
		\dim_k(I_{m+1}/I_m)=4c_m,
		\qquad
		\dim_k\Hom_{\Kb(\proj \Lambda_m)}(\P_m,\P_m[-1])=2c_m.
	\end{equation}
	In particular, $\P_m$ first becomes tilting at $m=\nu(\phi)$.
\end{corollary}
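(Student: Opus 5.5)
The plan is to reduce both formulas to the case $m=0$ over the quotient algebra, using the self-similarity of the construction. By Theorem~\ref{thm:nilpotent-operator-defect} we have $I_m=\mathcal M_\phi(K_m)$. Lemma~\ref{lem:nilpotent-quotient-compatibility} identifies $\Lambda_m=\Lambda_\phi/I_m$ with $\Lambda_{\bar\phi_m}$, where $\bar\phi_m$ is the operator induced on $V/K_m$, and identifies $\P_\phi\otimes_{\Lambda_\phi}\Lambda_m$ with $\P_{\bar\phi_m}$. The proof of Theorem~\ref{thm:nilpotent-operator-defect} also records $\ker\bar\phi_m=K_{m+1}/K_m$, which has dimension $c_m$. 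When $m\ge\nu(\phi)$ we have $V/K_m=0$, so we are in the $V=0$ (Kronecker) convention; there both sides of \eqref{eq:nilpotent-profile-dimensions} vanish, since $c_m=0$, the complex is tilting over $B$, and $I_{m+1}=I_m$.

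\emph{Ideal layer.} Transport $I_{m+1}/I_m$ along $q_{K_m}$. The image formula of Lemma~\ref{lem:nilpotent-quotient-compatibility} gives $I_{m+1}/I_m\simeq\mathcal M_{\bar\phi_m}(K_{m+1}/K_m)$, equivalently $\mathcal M_\phi(K_{m+1})/\mathcal M_\phi(K_m)$. For any $\phi$-invariant subspace $U$, the four-corner description gives
\[
\dim_k\mathcal M_\phi(U)=2\dim U+\dim U+\dim U+0=4\dim U,
\]
and this yields $\dim_k(I_{m+1}/I_m)=4c_m$.

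\emph{Negative morphisms.} Proposition~\ref{prop:nilpotent-first-defect}, applied to $(\Lambda_{\bar\phi_m},\P_{\bar\phi_m})$, gives
\[
\Hom_{\Kb(\proj\Lambda_m)}(\P_{\bar\phi_m},\P_{\bar\phi_m}[-1])\simeq(\ker\bar\phi_m)^2,
\]
which has dimension $2c_m$. This is the main obstacle: the corollary concerns a chosen basic representative $\P_m$, while the computation above is for the reduced complex. Corollary~\ref{cor:iterated-hrs-base-change} only says that these two complexes have the same additive closure, and $\Hom(-,-[-1])$ depends on summand multiplicities. To close the gap we show that $\P_{\bar\phi_m}$ is itself basic. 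This follows from Step~1 of the proof of Proposition~\ref{prop:nilpotent-silting-realization}, applied to $\bar\phi_m$ (or the Kronecker case when the quotient is $0$): there $X,Y$ are shown to be nonisomorphic indecomposables with local endomorphism rings. Hence $\P_{\bar\phi_m}\simeq\P_m$, and the dimension $2c_m$ holds for $\P_m$.

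Finally, $\P_m$ is tilting if and only if $\Hom(\P_m,\P_m[-1])=0$, that is, if and only if $c_m=0$. Since $c_m>0$ for $m<\nu(\phi)$ and $c_m=0$ for $m\ge\nu(\phi)$, the complex $\P_m$ first becomes tilting at $m=\nu(\phi)$. This is consistent with Corollary~\ref{cor:bz-ideal-increment} and Theorem~\ref{thm:nilpotent-operator-defect}.
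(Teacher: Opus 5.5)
Your proposal is correct and follows essentially the same route as the paper. The layer dimension comes from the four-corner count of $\mathcal M_{\bar\phi_m}(\ker\bar\phi_m)$, and the negative self-Hom comes from Proposition~\ref{prop:nilpotent-first-defect} applied to $\P_{\bar\phi_m}$, once this complex is identified with the basic representative $\P_m$ through equal additive closures. Your appeal to Step~1 of Proposition~\ref{prop:nilpotent-silting-realization} to show that $\P_{\bar\phi_m}$ is basic only spells out what the paper asserts in the phrase ``since both complexes are basic.''
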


\begin{proof}
	Theorem~\ref{thm:nilpotent-operator-defect} gives
	\[
	I_{m+1}/I_m
	\simeq
	\mathcal M_{\bar\phi_m}(\ker\bar\phi_m),
	\quad
	\ker\bar\phi_m\simeq K_{m+1}/K_m.
	\]
	The four corners $e_i\mathcal M_{\bar\phi_m}(\ker\bar\phi_m)e_j$ of $\mathcal M_{\bar\phi_m}(\ker\bar\phi_m)$ are
	\[
	(\ker\bar\phi_m)^{\oplus2},
	\quad
	\ker\bar\phi_m,
	\quad
	\ker\bar\phi_m,
	\quad
	0.
	\]
	Hence
	$\dim_k(I_{m+1}/I_m)
	=
	4\dim_k\ker\bar\phi_m
	=
	4c_m.$

	Lemma~\ref{lem:nilpotent-quotient-compatibility} and
	Corollary~\ref{cor:iterated-hrs-base-change} give
	$\add\P_m=\add\P_{\bar\phi_m}$
	under the identification
	$\Lambda_m\simeq\Lambda_{\bar\phi_m}$.
	Since both complexes are basic, they are isomorphic in
	$\Kb(\proj \Lambda_m)$.
	Proposition~\ref{prop:nilpotent-first-defect} gives
	$\Hom(\P_m,\P_m[-1])\simeq(\ker\bar\phi_m)^{\oplus2}$.
	Hence we have
	\[
	\dim_k\Hom(\P_m,\P_m[-1])=2c_m.
	\]
	Since a two-term silting complex is tilting precisely when its degree $-1$
	self-Hom vanishes, and $c_m=0$ precisely for $m\ge\nu(\phi)$,
	$\P_m$ first becomes tilting at $m=\nu(\phi)$.
\end{proof}

\subsection{Example of arbitrary HRS height}
\label{subsec:jordan-arbitrary-height}

Within this family, the ordered dimensions of the HRS ideal filtration layers are
completely determined by the Jordan type of the linear transformation $\phi$.

\begin{corollary}
	\label{cor:jordan-profile-realization}
	Let $\lambda=(\lambda_1\ge\lambda_2\ge\cdots\ge\lambda_r\ge1)$ be a
	partition, with conjugate partition $\lambda'$, where
	$\lambda'_j=\#\{i:\lambda_i\ge j\}$. There exist a
	finite-dimensional basic algebra $\Lambda_\lambda$ and a two-term
	silting complex $\P_\lambda$ whose induced torsion pair $(\T(\P_{\lambda}),\F(\P_{\lambda}))$
	satisfies
	$h_{\Lambda_\lambda}(\T(\P_\lambda))=\lambda_1,$
	and, for $0\le m<\lambda_1$,
	\begin{equation}\label{eq:jordan-profile-realization}
		\dim_k(I_{m+1}/I_m)=4\lambda'_{m+1},\quad
		\dim_k\Hom(\P_m,\P_m[-1])=2\lambda'_{m+1}.
	\end{equation}
\end{corollary}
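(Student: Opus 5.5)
The plan is to realize the partition as the Jordan type of a nilpotent operator and then apply Theorem~\ref{thm:nilpotent-operator-defect} and Corollary~\ref{cor:nilpotent-profile-dimensions} directly. Let $V=\bigoplus_{i=1}^r k^{\lambda_i}$, and let $\phi$ act on the $i$-th summand by a single nilpotent Jordan block $J_{\lambda_i}$ of size $\lambda_i$. Then $V\neq0$, $\phi$ is nilpotent, and $\nu(\phi)=\lambda_1$, since the largest block has size $\lambda_1$. We put $\Lambda_\lambda=\Lambda_\phi$ and $\P_\lambda=\P_\phi$. By the discussion in Section~\ref{subsec:nilpotent-operator-families}, $\Lambda_\lambda$ is a finite-dimensional basic algebra. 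By Proposition~\ref{prop:nilpotent-silting-realization}, $\P_\lambda$ is a two-term silting complex.

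The height statement is then immediate from Theorem~\ref{thm:nilpotent-operator-defect}:
\[
h_{\Lambda_\lambda}(\T(\P_\lambda))=\nu(\phi)=\lambda_1.
\]
For the layer dimensions, Corollary~\ref{cor:nilpotent-profile-dimensions} gives
\[
\dim_k(I_{m+1}/I_m)=4c_m,\qquad \dim_k\Hom(\P_m,\P_m[-1])=2c_m,
\qquad c_m=\dim_k(K_{m+1}/K_m),\quad K_m=\ker\phi^m.
\]
So it only remains to check the linear-algebra identity $c_m=\lambda'_{m+1}$ for $0\le m<\lambda_1$.

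This identity is the only step needing computation, and it is routine. On a single block $J_{\lambda_i}$ we have $\dim\ker J_{\lambda_i}^m=\min(m,\lambda_i)$. Since kernels of powers are additive over the block decomposition,
\[
\dim K_m=\sum_i\min(m,\lambda_i).
\]
Therefore
\[
c_m=\sum_i\bigl(\min(m+1,\lambda_i)-\min(m,\lambda_i)\bigr)=\#\{i:\lambda_i\ge m+1\}=\lambda'_{m+1}.
\]
Substituting this into the formulas above gives \eqref{eq:jordan-profile-realization}.

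One bookkeeping point needs care: $\P_m$ must denote the basic representative of the transported silting complex, as fixed before Corollary~\ref{cor:nilpotent-profile-dimensions}. That corollary is stated for exactly this representative, so no further argument is needed. As a consistency check, for $m\ge\lambda_1$ the layers vanish, because $c_m=\lambda'_{m+1}=0$ there.
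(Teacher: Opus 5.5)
Your proposal is correct and follows the paper's own proof. The paper also takes $\phi$ to be the nilpotent operator of Jordan type $\lambda$ (written as multiplication by $z$ on $\bigoplus_i k[z]/(z^{\lambda_i})$), uses $\dim_k\ker\phi^m=\sum_i\min(m,\lambda_i)$ to get $\nu(\phi)=\lambda_1$ and $c_m=\lambda'_{m+1}$, and then applies Theorem~\ref{thm:nilpotent-operator-defect} and Corollary~\ref{cor:nilpotent-profile-dimensions}.
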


\begin{proof}
	Take
	$V_\lambda=\bigoplus_{i=1}^r k[z]/(z^{\lambda_i})$
	and let $\phi_\lambda$ be multiplication by $z$.  Set
	$\Lambda_\lambda=\Lambda_{\phi_\lambda}$,
	$\P_\lambda=\P_{\phi_\lambda}$, and  $(\T(\P_{\lambda}),\F(\P_{\lambda}))=(\T(\P_{\phi_\lambda}),\F(\P_{\phi_\lambda}))$.
	Since a block of size $\lambda_i$ contributes $\min(m,\lambda_i)$
	to $\dim_k\ker\phi_\lambda^m$, one has $\nu(\phi_\lambda)=\lambda_1$ and
	\[
	\dim_k\frac{\ker \phi_\lambda^{m+1}}{\ker \phi_\lambda^m}
	=\#\{i\mid\lambda_i\ge m+1\}=\lambda'_{m+1}.
	\]
	Theorem~\ref{thm:nilpotent-operator-defect} and
	Corollary~\ref{cor:nilpotent-profile-dimensions} now give the stated
	height and dimensions.
\end{proof}
\begin{exm}
	For a single Jordan block, set
	$V_N=k[z]/(z^N)$ and let $\phi_N$ be multiplication by $z$.
	Write
	\[
	\mathcal M_N:=\mathcal M_{\phi_N},\quad
	\Lambda_N:=\Lambda_{\phi_N},\quad
	\P_N:=\P_{\phi_N},
	\]
	and 
	$(\T_N,\F_N)$ for the corresponding torsion pair.

	Since $\ker \phi_N^m=z^{N-m}V_N$ for $0\le m\le N$, Theorem~\ref{thm:nilpotent-operator-defect}
	gives the explicit filtration
	\begin{equation}\label{eq:single-block-defect-chain}
		I_m^{(N)}=\mathcal M_{\phi_N}(z^{N-m}V_N)
		\qquad(0\le m\le N).
	\end{equation}
	The inclusions $I^{(N)}_0\subsetneq I^{(N)}_1\subsetneq\ldots \subsetneq I^{(N)}_N$ are strict and
	$I_N^{(N)}=I_{N+1}^{(N)}=\cdots=\mathcal M_N$.  Moreover, $\dim_k\Lambda_N=\dim_k B+\dim_k\mathcal M_N=4+4N$.
	
	For every $N\ge1$, the single-block family satisfies
	$h_{\Lambda_N}(\T_N)=N$.
	Since $\dim_k(K_{m+1}/K_m)=1$ for $0\le m<N$,
	Corollary~\ref{cor:nilpotent-profile-dimensions} gives
	\[
	\dim_k(I_{m+1}^{(N)}/I_m^{(N)})=4,
	\quad
	\dim_k\Hom(\P_m,\P_m[-1])=2.
	\]
	Thus the HRS height is unbounded even when both dimensions remain fixed
	at every stage before stabilization.
\end{exm}

\begin{remark}\label{cor:minimal-size-height-realization}
The examples use the least possible number of simple modules.
For height at least two, they also have the least possible Loewy length.

An algebra with one simple module is Morita equivalent to a local
algebra $A$.  Since Morita equivalence transports the HRS iteration,
it preserves height. Thus it suffices to work over $A$.
Represent a two-term silting complex by a minimal complex
$\P=[A^u\xrightarrow d A^v]$; its differential has all entries
in $\rad A$.
Suppose $u,v>0$, and choose a map $A^u\to A^v$ with one entry
$1_A$ and all other entries zero.  This defines a chain map
$\P\to\P[1]$.  Since every entry of a null-homotopic map
$Rd+dS$ belongs to $\rad A$, the chosen map is not null-homotopic.
This contradicts $\Hom(\P,\P[1])=0$.  Hence $u=0$ or $v=0$,
and $\P$ is concentrated in one degree.  It is therefore tilting
and has height zero.  Positive height requires at least two
simple modules.

For the second assertion, an algebra of Loewy length at most two
has radical square zero.
Corollary~\ref{cor:collapse} then gives height at most
one.  Hence height at least two requires Loewy length at least three.
The algebras $\Lambda_N$ attain both bounds.
\end{remark}
\section*{Acknowledgements}
This work was supported by the National Natural Science Foundation of
China (Grant Nos.~12001164, 12571039 and 1250011863). The authors thank
Yu Zhou for inspiring discussions.

\section*{Declaration of AI assistance}
The authors used ChatGPT (OpenAI) to assist with the development of proof
arguments, exploratory calculations, and revision of the manuscript.
The research questions and initial approaches were formulated by the
authors. The authors take full responsibility for the correctness,
originality, and presentation of the work.

\end{document}